\newtheorem{theorem}{Theorem}
\newtheorem{assumption}{Assumption}
\newtheorem{proposition}{Proposition}
\newtheorem{definition}{Definition}
\newtheorem{lemma}{Lemma}
\newtheorem{remark}{Remark}
\newtheorem{corollary}{Corollary}
\begin{document}

\title{Inertial Accelerated  Stochastic Mirror Descent for  Large-Scale Generalized Tensor CP Decomposition}


\author{Zehui Liu\thanks{LMIB, School of Mathematical Sciences, Beihang University, Beijing 100191, China. Email: liuzehui@buaa.edu.cn}, \hskip 0.2cm 
Qingsong Wang\thanks{School of Mathematics and Computational Science, Xiangtan University, Xiangtan 411105, China. Email: nothing2wang@hotmail.com}, \hskip 0.2cm 
Chunfeng Cui\thanks{LMIB, School of Mathematical Sciences, Beihang University, Beijing 100191, China. Email: chunfengcui@buaa.edu.cn}, \hskip 0.2cm 
Yong Xia\thanks{LMIB, School of Mathematical Sciences, Beihang University, Beijing 100191, China. Email: yxia@buaa.edu.cn}
}

\maketitle

\begin{abstract} 
The majority of classic tensor CP decomposition models are designed for squared loss, employing Euclidean distance as a local proximal term. However, the Euclidean distance is unsuitable for the generalized loss function applicable to various types of real-world data, such as integer and binary data. Consequently, algorithms developed under the squared loss are not easily adaptable to handle these generalized losses, partially due to the lack of the gradient Lipschitz continuity. This paper considers the generalized tensor CP decomposition. We use the Bregman distance as the proximal term and propose an inertial accelerated  block randomized stochastic mirror descent algorithm (iTableSMD). 
Within a broader multi-block variance reduction and inertial acceleration framework, we demonstrate the sublinear convergence rate for the subsequential sequence produced by the iTableSMD algorithm. We further show that   iTableSMD requires at most $\mathcal{O}(\varepsilon^{-2})$ iterations in expectation to attain an $\varepsilon$-stationary point and establish the global convergence of the sequence.
Numerical experiments on real datasets demonstrate that our proposed algorithm is efficient and achieve better performance than the existing state-of-the-art methods. 
\end{abstract}

\begin{keywords}
Generalized tensor CP decomposition, Inertial acceleration, Stochastic mirror descent, Bregman divergence, Non-Lipschitz gradient continuity,  Variance reduction.

\end{keywords}

\maketitle
\section{Introduction}



A fundamental generic optimization model that encompasses a wide range of   multi-block models arising in various applications is the well-known composite minimization problem. It can be formally defined as:
\begin{equation}\label{Pure_problem}
\min _{\{x_t\}_{t=1}^{s}} \Phi \left({x}_1, \ldots, {x}_s\right) \equiv f\left({x}_1, \ldots, {x}_s\right)+\sum_{t=1}^s h_t\left({x}_t\right),
\end{equation}
where variable $x$ can be decomposed into $s$ blocks ${x}_1, \ldots, {x}_s$, $f$ is assumed to be a continuously differentiable nonconvex function over $x=\left({x}_1, \ldots, {x}_s\right)$, and can be  convex in the manner of a block ${x}_t$, while all the other blocks are fixed. It can also admit a finite-sum structure form $f(x)=\frac{1}{n}\sum_{i=1}^{n}f_{i}(x)$ with different indexes. 
The usual restrictive condition of the gradient Lipschitz continuity of $f_{i}$ is not required, and $h_t, t=1, \ldots, s$, are extended-value weakly convex functions, which is a structure-promoting regularizer that captures the prior information about ${x}_i$, such as column-wise orthogonality \cite{Krijnen2008}, Tikhonov regularization \cite{Paatero2000}, iterated Tikhonov regularization \cite{Navasca2008}, non-negativity \cite{Lim2009}.

The majority of classic models and algorithms developed for multi-block problem \eqref{Pure_problem} are least squares (LS) problems using the Euclidean distance-based fitting criterion \cite{Harshman1970, 1970Analysis}. Efficient methods in this line include the block coordinate descent (BCD) \cite{120887795}, alternating direction method of multipliers (ADMM) \cite{Han22}, and proximal alternating linearized minimization (PALM) \cite{BolteST14} algorithm. Then Pock and Sabach \cite{PockS16} 
introduced a 
inertial variant of PALM (iPALM). Huang et al. \cite{7484753} introduced a primal-dual algorithm AO-ADMM, which is a hybrid between alternating optimization   and  ADMM. There are also some other first-order type algorithms \cite{6544287, Comon2009TensorDA} and (quasi-)second-order methods \cite{Second2013, Order2013}. 
However, the Euclidean distance is unsuitable for measuring the proximity between various real-world data types, including nonnegative, integer, and binary data. Essentially, utilizing data geometry-aware divergences as fitting criteria has the potential to substantially improve both performance and robustness in real-world applications \cite{GCP2020, Kolda2019, WMLL2020, VMVNLL2020}. For instance, the proximity between two probability distributions is measured using an appropriate divergence, such as the generalized Kullback-Leibler (KL) divergence \cite{KLDiver2017, Kargas2019, FSCH2021, CTWWP2020} or the Itakura-Saito  divergence \cite{LP2015}. From a statistical perspective, numerous non-Euclidean divergences share a close connection with maximum likelihood estimators (MLEs) under reasonable data distribution assumptions. For example, the generalized KL divergence \cite{Kolda2012} and logistic loss can be obtained as MLEs for integer data with Poisson distributions and binary data with Bernoulli distributions, respectively. Nevertheless, methods under non-Euclidean divergences are much more challenging compared with the case under Euclidean loss, especially when the data size becomes huge. Algorithms designed for the LS loss are not easily adaptable to handle complex loss functions, primarily due to the absence of gradient Lipschitz continuity, even under relatively mild conditions.

The form of problem \eqref{Pure_problem} can be applied to tensor CP decomposition with regularization, which can be viewed as an extension of matrix factorization \cite{TeboulleV20}. The first idea of canonical polyadic (CP) decomposition is from Hitchcock \cite{Hitchcock1927, Hitchcock1928} in 1927, which expresses a tensor as a sum of a finite number of rank-1 tensors. Subsequently, Cattell \cite{Cattell1944PP, Cattell1952TheTB} proposed ideas for parallel proportional analysis and multiple axes for analysis. Furthermore, the research
developed by Hong et al. in 2020 is a generalized canonical polyadic (GCP) low-rank tensor decomposition that allows other loss functions besides squared error. Below, we briefly review existing developments for multi-block models with non-Euclidean loss functions.

Many existing non-Euclidean approaches employ the block coordinate descent (BCD) \cite{120887795} for updating block variables. 
Convergence of the BCD method typically requires the uniqueness of the minimizer at each step or the quasi-convexity of the objective function \cite{Tseng2001}. Unfortunately, these requirements can be restrictive in some important practical problems such as the tensor decomposition problem \cite{KoldaB09}.  Cichocki and Phan \cite{Cichocki2009} proposed a hierarchical alternating optimization algorithm for CP decomposition  with $\alpha$- and $\beta$-divergence. In \cite{Kolda2012}, the generalized KL-divergence loss was explored, leading to the development of a block majorization-minimization (MM) algorithm. The work in \cite{Kargas2019} presented the exponential gradient algorithm for handling the KL-divergence. Additionally, alternative optimization frameworks like Gauss-Newton based methods \cite{VMVNLL2020} and quasi-Newton methods \cite{GCP2020} have been devised for non-Euclidean models.

It is worth noting that most of the mentioned algorithms use the entire dataset for each update, which will be time-consuming. In contrast, stochastic algorithms reduce computational and memory requirements per iteration. A recent stochastic gradient-based algorithm \cite{Kolda2019} was introduced for tensor CP decomposition. However, it randomly samples tensor entries for updates, neglecting the potential computational efficiency enhancements through multilinear algebraic properties of low-rank tensors. More importantly, this update strategy loses the opportunity to incorporate regularization terms on the entire latent factors because the sampled entries only provide partial information about them. To address this, Battaglino et al. proposed an algorithm \cite{BattaglinoBK18} that samples tensor fibers containing information about complete latent factors. However, these stochastic algorithms lack convergence guarantees. Pu et al. \cite{Pu2022} developed a block-randomized stochastic mirror descent (SMD) \cite{LanSMD2015, BECK2003167} algorithmic framework for large-scale CP decomposition  under various non-Euclidean losses, also referred to as generalized CP decomposition. Specifically, at each iteration, one block factor is randomly chosen for an update while keeping all other factors fixed. Then, instead of solving the subproblem directly, it updates the unknown factor by one SMD step. This work also incorporated a fiber sampling strategy to assist in designing SMD updates. In this way, the computational cost is much smaller. However, the pure SMD is still slow in convergence.  
Wang et al. proposed mBrasCPD \cite{WangCH21} and iBrasCPD\cite{Wang2023}, which speed up the SGD scheme by the heavy ball method \cite{Polyak64} and inertial acceleration. Both of these algorithms are designed for scenarios involving Euclidean loss functions and are not suitable for the generalized non-Euclidean loss functions directly. Additionally, these algorithms only consider that stochastic gradient is unbiased, which can only induce weak convergence properties. Recently, \cite{WangH23} and \cite{WangLCH24} introduced the Bregman proximal stochastic gradient (BPSG) method and BPSG with extrapolation (BPSGE), respectively and established the convergence properties of the generated sequence in terms of subsequential and global convergence under a general framework of variance reduction. However, both BPSG and BPSGE are designed for single-block problems, and the potential for computing generalized tensor CP decomposition remains untapped.

An overview of several state-of-the-art algorithms for multi-block problem \eqref{Pure_problem} is presented in Table \ref{comparision}.

\begin{table}[!ht]	
\centering
\fontsize{7}{15}\selectfont
\caption{\footnotesize Summary of the properties of iTableSMD (Algorithm \ref{iTableSMD}) and several state-of-the-art stochastic methods.  {``subseq.'' and ``seq.'' denote the subsequential and sequential convergence, respectively.} ``Complexity'' means the complexity (in expectation) to obtain an $\varepsilon$-stationary point (Definition \ref{stationary-point}) of $\Phi$ and 
``-'' means not given.}
\label{comparision}
\begin{tabular}{|c| c| c |c |c |c |c|c|}
\hline
Algorithm&Loss&$h(x)$&Acceleration&Convergence & Complexity& $\nabla f_i$-Lip&$\tilde\nabla f_i$\cr\hline


 BrasCPD \cite{FuIWGH20} &  LS &convex &  no & subseq. &- &yes&unbiased \\\hline
 mBrasCPD \cite{WangCH21} & LS & convex  & heavy ball & subseq. &- &yes&unbiased  \\ \hline
 iBrasCPD \cite{Wang2023} & LS & convex & inertial &subseq. & - &yes&unbiased  \\\hline
 SPRING \cite{DriggsTLDS2020} & general & nonconvex & no &subseq./seq.  & - &yes &biased \\\hline
 iSPALM \cite{HertrichS20} & general & nonconvex & inertial &subseq./seq.  & - &yes &biased \\\hline
 SmartCPD \cite{Pu2022} & general  & convex &no & subseq. &- &no &unbiased \\\hline
 iTableSMD (Alg. \ref{iTableSMD}) & general  & weakly-convex & inertial &subseq./seq.  & $\mathcal{O}(\varepsilon^{-2})$ &no&biased  \\
 \hline
\end{tabular}
\end{table}

In this paper, inspired by inertial acceleration skill and variance reduction framework for stochastic algorithms, we propose an inertial accelerated stochastic mirror descent to solve the nonconvex and nonsmooth optimization problem \eqref{Pure_problem}, which can be applied to the block-wise subproblem of tensor generalized CP decomposition under non-Euclidean loss functions. Our main contributions addressed in this article are as follows: 


\begin{itemize}
\item[(1)] We introduce an inertial accelerated block-randomized SMD algorithm, denoted as iTableSMD, designed to address the GCP decomposition problem. Within a broader multi-block variance reduction framework, we demonstrate the sublinear convergence rate for the subsequential sequence produced by the iTableSMD algorithm.

\item[(2)]  Within a broader multi-block variance reduction and inertial acceleration
framework, we establish the sublinear convergence rate for the subsequential sequence produced
by the iTableSMD algorithm. Furthermore, we introduce a novel Lyapunov function and prove that it requires at most $\mathcal{O}(\varepsilon^{-2})$ iterations in expectation to attain an $\varepsilon$-stationary point. Additionally, we establish the global convergence of the sequence generated by iTableSMD.  

\item[(3)] We conduct extensive experiments, including three synthetic datasets and two real-world datasets in several distributions, to demonstrate the effectiveness of our proposed algorithms iTableSMD. 
Our numerical experiments exhibit that our proposed methods can achieve better convergence.
\end{itemize}

The rest of this paper is organized as follows. Section \ref{preliminary} outlines necessary definitions and preliminary results of existing models and algorithms. Section \ref{algorithm} details the formulation of the iTableSMD algorithm, while Section \ref{convergence_analysis} is dedicated to proving its convergence and analyzing its rate of convergence. In Section \ref{numercial_experiments}, we compare the performance of the iTableSMD algorithm with several baselines using synthetic and real-world datasets. We conclude the paper in Section \ref{conclusion}.

\section{Preliminaries}\label{preliminary}


\begin{definition}(Bregman Divergence): Given a strongly convex function $\psi(\cdot): \operatorname{dom} \psi\left(\subseteq \mathbb{R}^n\right) \rightarrow \mathbb{R}$, the Bregman distance between $x \in \operatorname{dom} \psi$ and $y \in$ int dom $\psi$ is
\begin{equation}
    D_{\psi}(x, y) := \psi (x) - \psi (y) - \langle \nabla\psi(y), x - y\rangle. 
\end{equation}
\end{definition} 
It measures the proximity of $x$ and $y$. Indeed, $\psi$ is convex if and only if $D_{\psi}(x,y) \ge 0$ for any $x\in\text{dom }\psi,y\in\text{int dom }\psi$ due to the gradient inequality.

\begin{remark}
  The Bregman divergence was originally defined in \cite{8821380} using a Legendre function $\psi$. Here, we consider the case where $\psi$ is a strongly convex function; see more details in \cite{BauschkeBT17}. It should be noted that 
  \begin{equation}
      D_{\psi}(x,y) \ge \frac{\sigma}{2}\|x-y\|^2, \quad \forall\, x\in \text{dom }\psi,y\in\text{int dom }\psi, 
  \end{equation}
  where $\sigma$ is the strongly convex parameter of $\psi$.
   $D_\psi\left(x, y\right)=0$ if and only if $x=y$. In addition, $D_{\psi}(x,y)$ can be defined in a coordinate-wise form as $D_\psi\left(x, y\right)=\sum_{i=1}^{I_n} \sum_{j=1}^R\psi\left(x_{ij}\right)-\psi\left(y_{ij}\right)-$ $\left\langle\nabla \psi\left(y_{ij}\right), x_{ij}-y_{ij}\right\rangle$.
\end{remark}
\begin{definition}\label{Ll-smooth}
(\cite{MukkamalaOPS20} $(\bar{L},\underline{L})$-smooth adaptable) Given $\psi$, let $f:\mathcal{X}\rightarrow(-\infty,+\infty]$ be a proper and lower semi-continuous function with $\mathrm{dom}\,\psi\subset\mathrm{dom}\,f$, which is continuously differentiable. We say $(f, \psi)$ is $(\bar{L},\underline{L})$- smooth adaptable  on $C$ if there exist $\bar{L}>0$ and $\underline{L}\ge0$ such that for any $x,y$,
\begin{eqnarray}
f(x)-f(y)-\langle\nabla f(y),x-y\rangle\le \bar{L} D_{\psi}(x,y),\label{L_upper}
\end{eqnarray}
and
\begin{eqnarray}
-\underline{L}D_{\psi}(x,y)\le f(x)-f(y)-\langle \nabla f(y),x-y\rangle.\label{L_lower}
\end{eqnarray}
\end{definition}
If $\underline{L}=\bar{L}$, it recovers \cite[Definition 2.2]{BolteSTV18First}. Suppose  $f$ is convex. If $\underline{L}=0$, this definition recovers \cite[Lemma 1]{BauschkeBT17} and \cite[Definition 1.1]{LuFN18}.

\begin{definition} \label{stationary-point} (\cite{Lan2020First} $\epsilon$-stationary point) 
Given $\epsilon>0$, a solution $\{x_{1}^{*},\dots,x_{s}^{*}\}$ is said to be an $\epsilon$-stationary point of function $\Phi(x_{1},\dots,x_{s})$ if
\[
\mbox{dist}(0,\partial \Phi(x_{1}^{*},\dots,x_{s}^{*}))\le\epsilon. 
\]
\end{definition}

\subsection{Generalized CP decomposition}

Consider an $N$-th order tensor $\mathcal{X}\in\mathbb{R}^{I_{1}\times I_{2}\times\dots\times I_{N}}$, where $I_n$ represents the size of the $n$-th mode of $\mathcal{X}$. Such multi-array arises in many applications. $\mathcal{X}(i_{1},\dots,i_{N})$ is an entry of $\mathcal{X}$. The entries of the data tensor  $\mathcal{X}$ could be various types of real datasets, such as continuous numbers, non-negative integers, and binaries. A general problem of tensor CP decomposition is to approximate $\mathcal{X}$ using a low rank tensor $\mathcal{M}$, defined by
\begin{equation}\label{CP_DE}
    \mathcal{M}=\sum_{r=1}^{R} A_{1}(:, r) \circ \dots \circ A_{N}(:, r),
\end{equation}
where $\circ$ denotes the outer product of vectors, $A_n\in\mathbb R^{I_n\times R}$, are the unknown mode-$n$ latent factor matrix. $R$ is the smallest positive
integer for which equation \eqref{CP_DE} is satisfied, and it is also known as the rank of $\mathcal{M}$.

Let $I=(I_1I_2\dots I_N)^{\frac1N}$ be  the geometric mean of the dimensions. An $N$-dimensional integer vector $i$ is used to represent the entry coordinate, i.e.,
$$
i \in \mathcal{I} \triangleq\left\{\left(i_1, i_2, \ldots, i_N\right) \mid i_n=1,2, \ldots, I_n, \forall n\right\}
$$
The generalized CP (GCP) decomposition problem can be formulated as the following optimization task, where the primary objective is to minimize a data-adaptive loss function denoted by $f(\cdot,\cdot)$: $\mathbb{R} \times \mathbb{R} \mapsto \mathbb{R}$,
\begin{eqnarray}\label{GCP_element}
 \begin{aligned}
\min _{A_1, A_2, \ldots, A_n} &\quad \frac{1}{I^N} \sum_{i \in \mathcal{I}} f\left(\underline{\mathcal{M}}_{i} ; \underline{\mathcal{X}}_{i}\right)+\sum_{n=1}^N h_n\left(A_n\right) \\
 \text { s.t. } &\quad \underline{\mathcal{M}}_{i}=\sum_{r=1}^R \prod_{n=1}^N {A}_n\left(i_n, r\right), \forall\, i \in \mathcal{I}, \\
\end{aligned}   
\end{eqnarray}
where the entries $\underline{\mathcal{X}}_i$ and $\underline{\mathcal{M}}_i$ correspond to the elements of $\mathcal{X}$ and $\mathcal{M}$ indexed by $i$, respectively. $h_{n}(A_{n})$ is a structure-promoting regularizer that captures the prior information about the latent factors ${A}_n$, such as column-wise orthogonality \cite{Krijnen2008}, Tikhonov regularization \cite{Paatero2000}, iterated Tikhonov regularization \cite{Navasca2008},  nonnegativity \cite{Lim2009}. Those regularizations can result in well-posed problems. For instance, if $A_{n}\in \mathbb{R}^{I_n\times R}_{+}:=\{A_{n}\vert A_{n}\ge 0\}$ is applied, we can write $h_{n}(\cdot)$ as the indicator function: 
\begin{equation*}\label{nonnegative_eq}
	h_{n}(A)=\mathcal{I}_{\mathbb R_+^{I_n\times R}}\left(A\right)=\left\{\begin{array}{ll}
		0, & A \ge0, \\
		\infty, & \text { otherwise. }
	\end{array}\right.
\end{equation*}  Lim and Comon \cite{Lim2009} showed that the nonnegative CP decomposition always has optimal solutions.  
By selecting appropriate loss functions $f$, problem \eqref{GCP_element} becomes adaptable to handle diverse data types, including continuous, count, and binary data. To illustrate, we present several representative motivating examples.


The difference between GCP and the conventional CP formulation lies in the flexibility in the selection of loss functions. In this section, we provide alternative loss functions by examining the statistical likelihood of a model for a specific data tensor. We assume a parameterized probability density function (PDF) or probability mass function (PMF) is available, offering the likelihood estimation for each entry, i.e.,
$$
x_i \sim p\left(x_i \mid \theta_i\right), \quad \text { where } \quad \ell\left(\theta_i\right)=m_i. 
$$
Here $x_i$ represents an observation of a random variable, while $\ell(\cdot)$ denotes an invertible link function connecting the model parameter $m_i$ with the natural parameter of the distribution, $\theta_i$. The link function is commonly assumed to be the identity function or related to the expectation of distribution. 

We aim to find the model $\mathcal{M}$ that is the maximum likelihood estimate (MLE) across all entries. By assuming conditional independence of observations, the overall likelihood is simplified to the product of individual likelihoods. Hence, the MLE is the solution of the following optimization problem:
$$
\max_{\mathcal{M}} \ L(\mathcal{M} ; \mathcal{X}) \equiv \prod_{i \in \Omega} p\left(x_i \mid \ell^{-1}(m_i)\right), \quad \forall \, i \in \Omega.
$$
Here, $\mathcal M=\{m_i\}_{i\in\Omega}$. Then, we employ the negative logarithm to transform the product into a summation and convert it to a minimization problem. As the logarithm is a monotonic function, it preserves the maximizer.
\begin{equation}\label{MLE}
 \min_{\mathcal{M}}\ F(\mathcal{M} ; \mathcal{X}) \equiv \sum_{i \in \Omega} f_{0}\left(m_i ; x_i \right), \quad \text { where} \quad f_{0}(m_i ; x_i) \equiv-\log p\left(x \mid \ell^{-1}(m)\right). 
\end{equation}

\begin{table}[!htb]
\renewcommand\arraystretch{1.2}
\caption{Generalized loss function for different data types.}\label{dif_loss}
\begin{tabular}{ccccc}
\hline Data Type & Distribution & Link Function & Loss function & Constraints \\
\hline \multirow{2}{*}{ Continuous: $x \in \mathbb{R}$} & Gaussian & $\ell^{-1}(m)=m$ & $\frac{1}{2}(x-m)^2$ & $A_{n}\in\mathbb{R}^{I_n\times R}$ \\
& Gamma & $\ell^{-1}(m)=m$ & $\frac{x}{m}+\log (m)$ & $A_{n}\in \mathbb{R}^{I_n\times R}_{+}$ \\
\hline \multirow{2}{*}{Count: $x \in \mathbb{N}$ }& \multirow{2}{*}{Poisson} & $\ell^{-1}(m)=m$ & $m-x \log (m)$ & $A_{n}\in\mathbb{R}^{I_n\times R}_{+}$ \\
& & $\ell^{-1}(m)=e^m$ & $e^m-x m$ & $A_{n}\in\mathbb{R}^{I_n\times R}$ \\
\hline \multirow{2}{*}{Binary: $x \in\{0,1\}$ }& \multirow{2}{*}{Bernoulli} & $\ell^{-1}(m)=\frac{m}{1+m}$ & $\log (m+1)-x \log (m)$ & $A_{n}\in\mathbb{R}^{I_n\times R}_{+}$ \\
& & $\ell^{-1}(m)=\frac{e^m}{1+e^m}$ & $\log \left(1+e^m\right)-x m$ & $A_{n}\in\mathbb{R}^{I_n\times R}$ \\
\hline
\end{tabular} 
\end{table}
  
Table \ref{dif_loss} presents commonly utilized generalized loss functions $f(x,m)$, associated link functions, and various distributions. 


\subsection{Stochastic methods for GCP decomposition}
We can modify the element-wise regularized problem \eqref{GCP_element} to a block-wise regularized problem 
\begin{align}\label{regular_GCPD}
	\min_{\{A_{n}\}_{n=1}^{N}}\,\, \Phi(A_{1},\dots,A_{N}) := f(A_{1},\dots,A_{N})+\sum_{n=1}^{N}h_{n}(A_{n}),
\end{align}
{where $f=f_{0}([\![A_1, A_2,\cdots,A_N ]\!]_i ; x_i)$, $m_i=\sum_{r=1}^R \prod_{n=1}^N {A}_n\left(i_n, r\right), \forall\, i \in \mathcal{I}$, and $f : \Pi_{n=1}^{N}\mathbb{R}^{I_{n}\times R}\rightarrow \mathbb{R}$ is finite-valued and differential. Shortly, let $\mathcal{V}(\Phi)= \min_{A_n} 	\Phi(A_{1},\dots,A_{N})$.} 

We present the stochastic gradient with respect to the factor $A_{n}  (n=1,\dots,N)$, denoted by   $\tilde{\nabla}_{A_n}f(A_{1},\dots,A_{N})$. Suppose a mode index $n$ is sampled from $1$ to $N$. For instance, we take the squared error loss function as an example. 
Namely, consider 
\begin{eqnarray*}
		  f(A_1,\dots,A_N)=\frac{1}{2 I^N} \left\| [\![A_1, A_2,\cdots,A_N ]\!] - \mathcal X\right\|_F^2 = \frac{1}{2 I^N} \left\|H_nA_n^\top - X_{(n)}\right\|_F^2,
  \end{eqnarray*}
where $H_n=A_N\odot \dots A_{n+1}\odot A_{n-1}\dots A_1\in\mathbb R^{J_n\times R}$, $\odot$ denotes the Khatri-Rao product or a column-wise Kronecker product, $X_{(n)}\in\mathbb R^{J_n\times I_n}$ is the mode-$n$ matrization and $J_n=I^N/I_n$. 

Then,  we rewrite the  full gradient as follows
\begin{eqnarray}\label{full_gra}
	\begin{aligned}
		\nabla_{A_n} f(A_1,\dots,A_N)=&\frac{1}{I^N}\left(A_{n} \sum_{i=1}^{J_n} H_{n}(i,:)^{\top}H_{n}(i:)-X_{(n)}(i,:)^{\top}H_{n}(i,:)\right)\\
		=&\frac{1}{I_n}\left(A_{n} \mathbb E_i[ H_{n}(i,:)^{\top}H_{n}(i:)]- \mathbb{E}_i[X_{(n)}(i,:)^{\top}H_{n}(i,:)]\right).
	\end{aligned}
\end{eqnarray}
Here, $H(i,:)$ denotes the $i$-th column of $H$, and $\mathbb E_i$ is the expectation over the index $i$. To alleviate the burden of computing the full gradient \eqref{full_gra}, we randomly sample a set of mode-$n$ fibers that is indexed by $\mathcal{F}_{n}\subset\{1,\dots,J_{n}\}$ with $|\mathcal{F}_{n}|=B$. 
Note that a mode-$n$ fiber of $\mathcal{X}$  is a row of the mode-$n$ unfolding $X_{(n)}$. Compared with the ﬁber sampling-based method in \cite{BattaglinoBK18}, our requirement on the batchsize $B$ is much lower. Hence, it admits lower per-iteration memory and computational complexities, especially when the rank is high. 


Let $\tilde{\nabla}_{A_n}f(A_{1},\dots,A_{N})\in\mathbb{R}^{I_{n}\times R}$ be the stochastic gradient of $f(A_{1},\dots, A_{N})$ for $A_{n}$,   we have
\begin{eqnarray}
    \begin{aligned}
	 \tilde{\nabla}_{A_n}f(A_{1},\dots,A_{N})&=\frac{1}{{I_n}\left|\mathcal{F}_{n}\right|}\left(A_{n} H_{n}^{\top}\left(\mathcal{F}_{n}\right) H_{n}\left(\mathcal{F}_{n}\right)-X_{n}^{\top}\left(\mathcal{F}_{n}\right) H_{n}\left(\mathcal{F}_{n}\right)\right), 
     \end{aligned}\label{sto_gradient}
\end{eqnarray}
where 
\[
X_{n}\left(\mathcal{F}_{n}\right)=X_{n}\left(\mathcal{F}_{n},:\right), \quad H_{n}\left(\mathcal{F}_{n}\right)=H_{n}\left(\mathcal{F}_{n},:\right).
\]

\subsection{Stochastic mirror descent}

In this subsection, we introduce some basics for optimization methods used in this paper, such as stochastic mirror descent (SMD) and inertial framework. 

Consider the special case in optimization problem \eqref{Pure_problem} with the block $s=1$
\[
\min_{x\in\mathbb{R}^{d}} \Phi\left(x\right)=f (x)+ h\left(x\right),
\]
where the component function $f$ is a continuously differentiable nonconvex function, and $h$ is an extended valued function that are bounded from below. The update of SMD \cite{lu2018relativecontinuity, doi:10.1137, zhang2018convergence} is given as follows,
\[
x^{k+1}\in \underset{x}{\arg\min}\,\, h(x) + \langle \tilde{\nabla} f(x^{k}), x-x^{k}\rangle + \frac{1}{\eta^{k}} D_{\psi}(x, x^{k}),
\]
where the stochastic gradient can be chosen as mini-batch version  $\tilde{\nabla} f(x^{k})=\frac{1}{|B^{k}|}\sum_{i\in B^{k}}\nabla f_{i}(x^{k})$. Here, the mini-batch $B^{k}$ is chosen uniformly at random from all subsets of $\{1,\dots,T\}$, with the batchsize $|B^{k}|=B$ being considerably smaller than $T$.
Compared with SGD, SMD replaces the quadratic term $\left\|{A}_n-{A}_n^k\right\|_F^2$.
When $\psi$ is properly designed, SMD can exploit the geometry of the problem and achieve significant efficiency enhancements compared to SGD, particularly when utilizing generalized loss functions. The extensive literature on MD and SMD in optimization is available \cite{BolteSTV18First, Latafat2022, davis2018, zhang2018convergence, li2019provable}.

As usual with the analysis of Bregman based
schemes, the following simple but remarkable three points identity for $D_{\psi}$ is very useful, which follows from elementary algebra. Given any $x \in \operatorname{dom} \psi$ and $y,z \in$ int dom $\psi$, the three point equality is 
\begin{equation}\label{equ:3point}
    D_{\psi}(x,z) = D_{\psi}(x,y) + D_{\psi}(y,z)+\langle \nabla \psi(y) - \nabla \psi(z), x-y\rangle.
\end{equation}

{For the multi-block problem, Pu et al. \cite{Pu2022} develop a unified stochastic mirror descent algorithmic framework (SmartCPD) for large-scale CPD under various non-Euclidean losses, which is a special case of multi-block problem and updates the factor variables by }
\begin{eqnarray}
	\begin{aligned}
		A_{n}^{k+1} &= \arg \min _{A_{n}}\,\,h_{n}\left(A_{n}\right)+\langle \tilde{\nabla}_{A_n}f(A_{1}^k,\dots,A_{N}^k),A_{n}-A_{n}^{k}\rangle+\frac{1}{\eta^{k}} D_{\psi}(A_{n}, A_{n}^{k}), \\
		A_{n^{\prime}}^{k+1} &= A_{n^{\prime}}^{k}, \quad n^{\prime} \neq n.
	\end{aligned}\label{update_A_O}
\end{eqnarray}
However, directly employing stochastic mirror descent for the GCP problem may not yield the most effective results. In this paper, we study stochastic gradients under the variance-reduced stochastic gradient estimators, such as SAGA \cite{DefazioBL14} and SARAH \cite{NguyenLST17}. Furthermore, the inertial acceleration framework is applied, which can be given by
\[
\begin{aligned}
	\bar{x}^{k}=&x^{k}+\alpha^{k}(x^{k}-x^{k-1}), \\
	\hat{x}^{k}=&x^{k}+\beta^{k}(x^{k}-x^{k-1}),\\
	x^{k+1}=& \hat{x}^{k}-\eta^{k} \nabla f(\bar{x}^{k}),
\end{aligned}
\]
where $\alpha^{k},\beta^{k}\in[0,1]$ are two inertial parameters. For example, if $\alpha^{k}=\beta^{k}=0$, it will be degenerated into the gradient descent method; If $\alpha^{k}=0$, then it will be reduced to the heavy-ball method \cite{Polyak64}; If $\alpha^{k}=\beta^{k}$, then it will be reduced to the Nesterov accelerated gradient method \cite{Nesterov1983}.

\section{Inertial accelerated block-randomized SMD}\label{algorithm} 

In this section, we propose an inertial accelerated block-randomized stochastic mirror descent algorithm (iTableSMD) for GCP decomposition \eqref{regular_GCPD}.
Before presenting the algorithm framework of iTableSMD, we make the following assumptions throughout the paper.

\begin{assumption} \label{assume_01}
	We assume that the following three conditions hold:
	\begin{itemize}
            \item[(i)]  \label{a_1} $h_{n}: \mathbb{R}^{I_{n}\times R}\rightarrow\mathbb{R}\cup \{+\infty\}  (n=1,2,\dots,N)$ are proper lower semi-continuous (l.s.c.) functions that are bounded from below. There exists $\alpha\in\mathbb{R}_{+}$ such that $h(\cdot)+\frac{\alpha}{2}\|\cdot\|^{2}$ is convex.
            \item[(ii)] $\psi:\mathbb{R}^{I_{n}\times R}\rightarrow\mathbb{R}$ is continuously differentiable and $\sigma$-strongly convex. Let $\sigma=1$ for simplicity.
            \item[(iii)]$\nabla \psi $ is Lipschitz continuous with modulus $M_{2} > 0$. For any two points $A_{i}$, $\hat{A}_{i}$ $\in \mathbb{R}^{I_{n}\times R}$, it presents that
            \[
		\|\nabla \psi(A_{i})-\nabla \psi(\hat{A}_{i})\|\le M_2\|A_{i}-\hat{A}_{i}\|, \quad i=1,2,\dots,N.
		\]
            \item[(iv)]$f : \Pi_{n=1}^{N}\mathbb{R}^{I_{n}\times R}\rightarrow \mathbb{R}$ is a proper and lower semi-continuous function with $\text{dom }\psi\subset\text{dom }f$.
		\item[(v)] The couple of functions $(f,\psi)$ is $(\bar{L},\underline{L})$-smooth adaptable.
            \item[(vi)] The function $\Phi$ is bounded from below, i.e., there exists a finite optimal objective value $\mathcal{V}(\Phi)$.
	\end{itemize}
\end{assumption}

\begin{algorithm}[H]
	\caption{iTableSMD: inertial accelerated block-randomized stochastic mirror descent for the optimization  problem \eqref{regular_GCPD}}
	\label{iTableSMD}
	{\bfseries Input:} an $N$-way tensor $\mathcal{X}\in\mathbb{R}^{I_{1}\times \dots\times I_{N}}$; the rank $R$; the sample size $B$; initialization $\{A_{n}^{-1}\}_{n=1}^{N}, \{A_{n}^{0}\}_{n=1}^{N}$; stepsize $\{\eta_{k}\}_{k\ge0}$; inertial parameters  $\{\alpha^{k}\}_{k\ge0},\{\beta^{k}\}_{k\ge0}\in[0,1]$; two constants $\delta,\epsilon$ with $1>\delta>\epsilon>0$.
	\begin{algorithmic}[1]
		\State $k\leftarrow 0$;
		\Repeat 
		\State sample $n$ uniformly  from $\{1, \dots, N\}$.
		\State sample $\mathcal{F}_{n}$ uniformly from $\{1,\dots,J_{n}\}$ with $|\mathcal{F}_{n}|=B$.
            \State compute $\tilde{A}_{n}^{k}={A}_{n}^{k}+\alpha^{k}(A_{n}^{k}-A_{n}^{k-1})\in\mathrm{int}\,\mathrm{dom}\, \psi$, where $\alpha_{k}\in[0,1)$. 
            \State compute an extrapolation parameter $\beta^{k}$ such that
            \begin{eqnarray}
            D_{\psi}(A_{n}^{k},\underline{A}_{n}^{k})\le \frac{\delta-\epsilon} {1+ \underline L\eta_{k-1}}D_{\psi}(A_{n}^{k-1},A_{n}^{k}), \label{extra_ineq}
            \end{eqnarray}
 		\,\,\,\,\,\,\,\,\,where $\underline{A}_{n}^{k}=A_{n}^{k}+\beta^{k}(A_{n}^{k}-A_{n}^{k-1})\in\mathrm{int}\,\mathrm{dom}\, \psi$.
		\State compute the stochastic gradient $\tilde{\nabla}_{A_n}f(A_1^k \cdots \underline{A}_n^k \cdots A_N^k)$ with the batchsize of fibers $B$.
		\State set $\eta_{k}\le\min\{\eta_{k-1},\bar{L}^{-1}\}$ and update $A_{n}^{k+1}$ and $A_{n^{\prime}}^{k+1}$:
		\begin{equation}
		\begin{aligned}
			A_{n}^{k+1} &= \arg \min _{A_{n}}\,\,h_{n}\left(A_{n}\right)+\langle\tilde{\nabla}_{A_n}f(A_1^k \cdots \underline{A}_n^k \cdots A_N^k),A_{n}-\tilde{A}_{n}^{k}\rangle+\frac{1}{\eta_{k}} D_{\psi}(A_{n},\tilde{A}_{n}^{k}), \label{xk_update}\\
			A_{n^{\prime}}^{k+1} &= A_{n^{\prime}}^{k}, \quad \forall n^{\prime} \neq n.
		\end{aligned}
		\end{equation}
		\State $k\leftarrow k+1$;
		\Until{some stopping criterion is reached;}
	\end{algorithmic}
	{\bfseries Output:} $\{A_{n}^{k}\}_{n=1}^{N}$.
\end{algorithm}

Let $\xi^k$ and $\zeta^k$  be the stochastic parameters for the block index and the stochastic gradient, respectively. Denote $\mathbb E_k[\cdot]=\mathbb E[\cdot |  \xi^k,\zeta^k]$ and $\mathbb E[\cdot]=\mathbb E[\cdot |\xi^0,\zeta^0,\dots]$.
\begin{definition}(Variance reduced stochastic gradient)\label{vr_definition}
	We say a gradient estimator $\tilde{\nabla}_{A_{n}}f$ with $n=1,2\dots,N$,  is variance-reduced with constants $V_{1}, V_{2},V_{\Gamma}\ge 0$, and $\tau\in(0,1]$ if it satisfies the following conditions:
	\begin{itemize}
		\item[(i)] (MSE Bound): there exists a sequence of random variables $\{\Gamma_{k}\}_{k\ge1}$ such that
		\begin{small}
			\begin{eqnarray}
				\begin{aligned}
	&\mathbb{E}_{k}[\|\tilde{\nabla}_{A_{\xi^k}}f(A_{1}^{k},\cdots,A_{\xi^{k}-1}^{k},\underline{A}_{\xi^{k}}^{k},A_{\xi^{k}+1}^{k},\cdots,A_{N}^{k})-\nabla_{A_{\xi^k}}f(A_{1}^{k},\cdots,A_{\xi^k-1}^{k},\underline{A}_{\xi^k}^{k},A_{\xi^{k}+1}^{k},\cdots,A_{N}^{k})\|_{*}^{2}]\\
	\le&\Gamma_{k}+V_{1}(\|A^{k}-A^{k-1}\|^{2}+\|A^{k-1}-A^{k-2}\|^{2}),
\end{aligned}\label{MSE_l22}
\end{eqnarray}
\end{small}
and  random variables $\{\Upsilon_{k}\}_{k\ge1}$ such that
\begin{small}
\begin{eqnarray}
\begin{aligned}
&\mathbb{E}_{k}[\|\tilde{\nabla}_{A_{\xi^{k}}}f(A_{1}^{k},\cdots,A_{\xi^{k}-1}^{k},\underline{A}_{\xi^{k}}^{k},A_{\xi^{k}+1}^{k},\cdots,A_{N}^{k})-\nabla_{A_{\xi^{k}}}f(A_{1}^{k},\cdots,A_{\xi^{k}-1}^{k},\underline{A}_{\xi^{k}}^{k},A_{\xi^{k}+1}^{k},\cdots,A_{N}^{k})\|_{*}]\\
\le&\Upsilon_{k}+V_{2}(\|A^{k}-A^{k-1}\|+\|A^{k-1}-A^{k-2}\|).
\end{aligned}\label{MSE_l2}
\end{eqnarray}
\end{small}
\item[(ii)] (Geometric Decay): The sequence $\{\Gamma_{k}\}_{k\ge1}$ satisfy the following inequality in expectation:
\begin{eqnarray}
\begin{aligned}
\mathbb{E}_{k}[\Gamma_{k+1}]\le& (1-\tau)\Gamma_{k}+V_{\Gamma}(\|A^{k}-A^{k-1}\|^{2}+\|A^{k-1}-A^{k-2}\|^{2}).\label{Gamma_k1_k}
\end{aligned}
\end{eqnarray}
\item[(iii)] (Convergence of Estimator): For all sequences $\{A^{k}\}_{k=0}^{\infty}$, if they satisfy \\
$\lim_{k\rightarrow\infty}\mathbb{E}\|A^{k}-A^{k-1}\|^{2}\rightarrow 0$, then it follows that $\mathbb{E}\Gamma_{k}\rightarrow0$ and $\mathbb{E}\Upsilon_{k}\rightarrow0$.
\end{itemize}
\end{definition}
 In Proposition \ref{vr_gra}, we show both SAGA and SARAH are variance reduced stochastic gradients.


\section{Convergence analysis}\label{convergence_analysis}






This section establishes the convergence properties of the iTableSMD algorithm. We prove its sublinear convergence rate for the subsequential sequence and further show that iTableSMD requires at most $\mathcal{O}(\varepsilon^{-2})$ iterations in expectation to attain an $\varepsilon$-stationary point. Additionally, we confirm the global convergence of the generated sequence.

\subsection{Subsequential convergence analysis}

Next, we show the descent amount of $\Phi\left(A_1^{k+1}, \cdots, A_N^{k+1}\right)$ under expectation  in the following lemma.

\begin{lemma}\label{lemma_Phi_kk1}
Suppose Assumption\ref{assume_01} is satisfied  and  $\tilde{\nabla}_{A_{n}}f$ with $n=1,2\dots,N$,  is variance-reduced by Definition \ref{vr_definition}. Let $\{A_{n}^{k}\}_{k>0}$ with $n\in\{1,\dots,N\}$ be the sequence generated by Algorithm \ref{iTableSMD}.
Then the following inequality holds for any $k>0$, 
\begin{align*}
&\mathbb{E}_{k}[\Phi\left(A_1^{k+1}, \cdots, A_N^{k+1}\right)] +\frac{1}{2\bar\gamma\tau}\mathbb{E}_{k}[\Gamma_{k+1}]+\left(\frac{1}{\eta_{k}}-\alpha-\bar\gamma-\frac{\gamma_{k}}{\eta_{k}}\right)\mathbb{E}_{k}[D^N_{\psi}(A^{k},A^{k+1})]\\
\le&\Phi\left(A_1^k, \cdots, A_N^k\right)+\frac{1}{2\bar\gamma\tau}\Gamma_{k}+\left(\frac{\delta-\epsilon}{\eta_{k}}+\frac{\bar{\gamma}}{2}+\frac{M_{2}^{2}(\alpha_{k}-\beta_{k})^{2}}{\eta_{k}\gamma_{k}}\right)D^N_{\psi}(A^{k-1},A^{k})+\frac{\bar\gamma}{2}D^N_{\psi}(A^{k-2},A^{k-1}).
\end{align*}
Here,  $\bar\gamma=\sqrt{2(V_{\Gamma}/\tau+V_{1})}$,  $\alpha$ is the weakly convex parameter in Assumption \ref{assume_01}~(i),  $\delta$ and $\epsilon$ are introduced in \eqref{extra_ineq}, and $V_{1}, V_{2},V_{\Gamma}\ge 0$, $\tau\in(0,1]$ are parameters in Definition~\ref{vr_definition}.
\end{lemma}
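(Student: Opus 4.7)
The plan is to derive the bound by combining four ingredients: the two-sided smooth-adaptable inequalities \eqref{L_upper}--\eqref{L_lower} applied to $f$, the optimality of the mirror-descent step \eqref{xk_update} together with the $\alpha$-weak convexity of $h_n$, the three-point identity \eqref{equ:3point} to reconcile the three Bregman anchors ($A_n^k$, $\tilde A_n^k$, $\underline A_n^k$) that appear in the update, and the variance-reduction properties in Definition~\ref{vr_definition} to control the stochastic error and embed the $\Gamma_{k+1}$ term.

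Condition on $\xi^k = n$. Since only block $n$ changes, applying \eqref{L_upper} to $f$ with anchor $(A_1^k,\ldots,\underline A_n^k,\ldots,A_N^k)$---the point where $\tilde\nabla f$ is evaluated---and then \eqref{L_lower} between the same anchor and $A^k$ telescopes the $\nabla f(\underline A_n^k)$-terms into $\langle\nabla f(\underline A_n^k), A_n^{k+1}-A_n^k\rangle$ at the cost of an $\underline L\,D_\psi(A_n^k,\underline A_n^k)$ residual. Combined with the optimality of $A_n^{k+1}$ in \eqref{xk_update} tested against $A_n^k$ (and using $\tfrac{\alpha}{2}\|\cdot\|^2\le\alpha D_\psi$ via the $\sigma=1$ strong convexity), this yields
\begin{align*}
\Phi(A^{k+1})\le{}&\Phi(A^k)+\langle\nabla f(\underline A_n^k)-\tilde\nabla f,A_n^{k+1}-A_n^k\rangle-\langle\tilde\nabla f,A_n^k-\tilde A_n^k\rangle\\
&+\underline L\,D_\psi(A_n^k,\underline A_n^k)+\bar L\,D_\psi(A_n^{k+1},\underline A_n^k)\\
&+\tfrac{1}{\eta_k}\bigl[D_\psi(A_n^k,\tilde A_n^k)-D_\psi(A_n^{k+1},\tilde A_n^k)\bigr]+\alpha D_\psi(A_n^k,A_n^{k+1}).
\end{align*}
The first inner product is handled by Young's inequality with parameter $\bar\gamma$, giving $\tfrac{1}{2\bar\gamma}\|\nabla f-\tilde\nabla f\|_*^2+\bar\gamma D_\psi(A_n^k,A_n^{k+1})$; the second is controlled in expectation through the first-moment MSE bound \eqref{MSE_l2}. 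Taking $\mathbb E_k$ and invoking \eqref{MSE_l22} turns the squared-norm piece into $\tfrac{1}{2\bar\gamma}(\Gamma_k+V_1(\cdots))$; adding $\tfrac{1}{2\bar\gamma\tau}\mathbb E_k[\Gamma_{k+1}]$ to both sides and applying \eqref{Gamma_k1_k} produces the Lyapunov pattern $\tfrac{1}{2\bar\gamma\tau}\Gamma_k$ on the right. The calibration $\bar\gamma=\sqrt{2(V_\Gamma/\tau+V_1)}$ is forced by requiring the residual $V_1$ and $V_\Gamma$ contributions, after $\|\cdot\|^2\le 2D_\psi$, to merge precisely into $\tfrac{\bar\gamma}{2}D_\psi^N(A^{k-1},A^k)+\tfrac{\bar\gamma}{2}D_\psi^N(A^{k-2},A^{k-1})$.

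To reduce the Bregman terms anchored at $\tilde A_n^k$ and $\underline A_n^k$ to the canonical anchor $A_n^k$, I apply the three-point identity \eqref{equ:3point} twice. Using the stepsize rule $\eta_k\le\bar L^{-1}$ to bound $\bar L\,D_\psi(A_n^{k+1},\underline A_n^k)$ by $\tfrac{1}{\eta_k}D_\psi(A_n^{k+1},\underline A_n^k)$ combines with the $\tfrac{1}{\eta_k}[D_\psi(A_n^k,\tilde A_n^k)-D_\psi(A_n^{k+1},\tilde A_n^k)]$ term to leave a $-\tfrac{1}{\eta_k}D_\psi(A_n^k,A_n^{k+1})$ descent contribution plus cross pieces of the form $\langle\nabla\psi(\tilde A_n^k)-\nabla\psi(\underline A_n^k),A_n^{k+1}-A_n^k\rangle$. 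These are controlled via the $M_2$-Lipschitzness of $\nabla\psi$ with $\|\tilde A_n^k-\underline A_n^k\|=|\alpha_k-\beta_k|\|A_n^k-A_n^{k-1}\|$ and split by Young with the free parameter $\gamma_k$, producing $\tfrac{\gamma_k}{\eta_k}D_\psi(A_n^k,A_n^{k+1})$ on the left and $\tfrac{M_2^2(\alpha_k-\beta_k)^2}{\eta_k\gamma_k}D_\psi(A_n^{k-1},A_n^k)$ on the right. Finally, the extrapolation condition \eqref{extra_ineq} absorbs the leftover $(\underline L+\tfrac{1}{\eta_k})D_\psi(A_n^k,\underline A_n^k)$ against $\tfrac{\delta-\epsilon}{\eta_k}D_\psi(A_n^{k-1},A_n^k)$, giving the stated $\tfrac{\delta-\epsilon}{\eta_k}$ coefficient. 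Taking $\mathbb E_k$ over the uniform block index $\xi^k$ converts block-indexed Bregman distances to $D_\psi^N$ throughout.

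The principal obstacle is the exact bookkeeping of Young-inequality constants. The calibration $\bar\gamma=\sqrt{2(V_\Gamma/\tau+V_1)}$ is pinned down by closing the Lyapunov recursion in $\Phi+\tfrac{1}{2\bar\gamma\tau}\Gamma$; any other choice leaves an irreducible residual on the $D_\psi^N(A^{k-1},A^k)$ coefficient that would obstruct the telescoping used in the later complexity and global-convergence arguments. A secondary delicacy is ensuring the $\nabla\psi$-Lipschitz cross-term contribution does not inflate the $D_\psi^N(A^{k-1},A^k)$ coefficient beyond what \eqref{extra_ineq} can absorb, which is precisely why the free parameter $\gamma_k$ is retained in the statement rather than fixed to a specific value.
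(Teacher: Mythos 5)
Your overall architecture matches the paper's proof (smooth-adaptable upper and lower bounds anchored at $\underline A_n^k$, weak convexity of $h_n$ plus the update's optimality, the three-point identity, Young's inequality with $\gamma_k$ and $\bar\gamma$, and the variance-reduction conditions), but there is a genuine gap at the single most important step: your base inequality never produces the term $-\tfrac{1}{\eta_k}D_\psi(A_n^k,A_n^{k+1})$, and the place you claim it comes from cannot yield it. You assert that bounding $\bar L\,D_\psi(A_n^{k+1},\underline A_n^k)\le\tfrac{1}{\eta_k}D_\psi(A_n^{k+1},\underline A_n^k)$ and combining with $\tfrac{1}{\eta_k}\bigl[D_\psi(A_n^k,\tilde A_n^k)-D_\psi(A_n^{k+1},\tilde A_n^k)\bigr]$ via \eqref{equ:3point} ``leaves a $-\tfrac{1}{\eta_k}D_\psi(A_n^k,A_n^{k+1})$ descent contribution.'' It does not: applying \eqref{equ:3point} to both $D_\psi(A_n^{k+1},\underline A_n^k)$ and $D_\psi(A_n^{k+1},\tilde A_n^k)$ with intermediate point $A_n^k$ makes the two $D_\psi(A_n^{k+1},A_n^k)$ terms cancel, and what survives is exactly $\tfrac{1}{\eta_k}D_\psi(A_n^k,\underline A_n^k)+\tfrac{1}{\eta_k}\langle\nabla\psi(\tilde A_n^k)-\nabla\psi(\underline A_n^k),A_n^{k+1}-A_n^k\rangle$ (this is the paper's \eqref{ineq_0002}); no negative Bregman term in $(A_n^k,A_n^{k+1})$ appears. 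In the paper that descent term has a different origin: one writes the first-order optimality condition of \eqref{xk_update}, substitutes $\xi_{k+1}=-\tilde\nabla_{A_n}f-\tfrac{1}{\eta_k}(\nabla\psi(A_n^{k+1})-\nabla\psi(\tilde A_n^k))$ into the subgradient inequality \eqref{h_weakconvex} for the $\alpha$-weakly convex $h_n$ at $A_n^k$, and then applies \eqref{equ:3point} to $\langle\nabla\psi(A_n^{k+1})-\nabla\psi(\tilde A_n^k),A_n^k-A_n^{k+1}\rangle$, which yields $\tfrac{1}{\eta_k}D_\psi(A_n^k,\tilde A_n^k)-\tfrac{1}{\eta_k}D_\psi(A_n^{k+1},\tilde A_n^k)-\tfrac{1}{\eta_k}D_\psi(A_n^k,A_n^{k+1})$. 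Your ``tested against $A_n^k$'' comparison, which uses only that $A_n^{k+1}$ minimizes the subproblem, gives the first two of these terms but not the third; consequently your route can only reach a coefficient of order $-(\alpha+\bar\gamma+\gamma_k/\eta_k)$ on $\mathbb{E}_k[D^N_\psi(A^k,A^{k+1})]$, with the wrong sign, so the stated left-hand term $\bigl(\tfrac{1}{\eta_k}-\alpha-\bar\gamma-\tfrac{\gamma_k}{\eta_k}\bigr)\mathbb{E}_k[D^N_\psi(A^k,A^{k+1})]$, and with it the Lyapunov argument of Lemma \ref{lyapunov_descent}, cannot be recovered.

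A secondary defect of your displayed inequality is the stray term $-\langle\tilde\nabla f,A_n^k-\tilde A_n^k\rangle=\alpha_k\langle\tilde\nabla f,A_n^k-A_n^{k-1}\rangle$: nothing in Assumption \ref{assume_01} or Definition \ref{vr_definition} bounds the magnitude of $\tilde\nabla f$ itself, so this term cannot be absorbed into the right-hand side of the lemma, and in a correct combination of the subproblem comparison with the smoothness bounds it does not arise at all. Minor remarks: the first-moment bound \eqref{MSE_l2} is not needed here (only \eqref{MSE_l22} and \eqref{Gamma_k1_k} enter; \eqref{MSE_l2} is used later in Lemma \ref{subgradient_bound}), and the passage to $D^N_\psi$ is bookkeeping --- the sampled block is the only one that moves and the $\underline A$-terms are upper-bounded by the sum over all blocks --- rather than an averaging over the uniform block index. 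The remaining ingredients you describe (the $M_2$/$\gamma_k$ Young step for $\nabla\psi(\tilde A_n^k)-\nabla\psi(\underline A_n^k)$, absorbing $(\underline L+\tfrac{1}{\eta_k})D_\psi(A_n^k,\underline A_n^k)$ through \eqref{extra_ineq} with $\eta_k\le\eta_{k-1}$, the $\Gamma$-recursion via geometric decay, and the calibration $\bar\gamma=\sqrt{2(V_\Gamma/\tau+V_1)}$) are in line with the paper.
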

\begin{proof}

From the convexity of $h(\cdot)+\frac{\alpha}{2}\|\cdot\|^{2}$, we can obtain the following inequality
\begin{equation}\label{h_weakconvex}
 h_n(A_n^{k+1})+\frac{\alpha}{2}\left\|A_n^{k+1}\right\|^2 +\left\langle\xi_{k+1}+\alpha A_n^{k+1}, A_n^k-A_n^{k+1}\right\rangle \leq h_n(A_n^k)+\frac{\alpha}{2}\left\|A_n^k\right\|^2,
\end{equation}
where $\xi_{k+1}\in \partial h_n(A_{n}^{k+1})$. From the optimality condition of \eqref{xk_update}, it shows that
\begin{equation*}
\xi_{k+1}+\tilde{\nabla}_{A_n}f(A_1^k \cdots \underline{A}_n^k \cdots A_N^k)+\frac{1}{\eta_k}(\nabla \psi(A_n^{k+1})-\nabla \psi(\tilde{A}_n^k))=0,
\end{equation*}
which combined with \eqref{h_weakconvex} yields that
$$
\begin{aligned}
h_n(A_n^{k+1}) & \leq h_n(A_n^k)+\frac{\alpha}{2}\left\|A_n^{k+1}-A_n^k\right\|^2+\langle\tilde{\nabla}_{A_n}f(A_1^k \cdots \underline{A}_n^k \cdots A_N^k), A_n^k-{A}_n^{k+1}\rangle \\
& -\frac{1}{\eta_k} D_\psi(A_n^k, A_n^{k+1})-\frac{1}{\eta_k} D_\psi(A_n^{k+1}, \tilde{A}_n^k) +\frac{1}{\eta_k} D_\psi(A_n^k, \tilde{A}_n^k).
\end{aligned}
$$
Furthermore, since $f$ is an $(\bar{L},\underline{L})$-relative smooth function with respect to $\psi$, we have
$$
\begin{aligned}
    f(A_1^{k+1} \cdots A_n^{k+1} \cdots A_N^{k+1}) &\leq f(A_1^k \cdots \underline{A}_n^k \cdots A_N^k)\\
    &+ \langle\nabla_{A_n} f(A_1^k \cdots \underline{A}_n^k \cdots A_N^k), A_n^{k+1}-\underline{A}_n^k\rangle +\bar{L} D_\psi(A_n^{k+1}, \underline{A}_n^k),
\end{aligned}
$$
and
$$
\begin{aligned}
    &f(A_1^k \cdots \underline{A}_n^k \cdots A_N^k)+\langle\nabla_{A_n} f(A_1^k \cdots \underline{A}_n^k \cdots A_N^k), A_n^{k}-\underline{A}_n^k\rangle 
    \leq f(A_1^k, \cdots A_n^k\cdots A_N^k) 
     +\underline{L} D_\psi(A_n^{k}, \underline{A}_n^k).
\end{aligned}
$$
Combining two inequalities, we can get
\begin{eqnarray}
\begin{aligned}
f(A_1^{k+1} \cdots A_n^{k+1} \cdots A_N^{k+1}) &\leq   f(A_1^k \cdots A_n^k \cdots A_N^k)+\langle\nabla_{A_n} f(A_1^k \cdots \underline{A}_n^k \cdots A_N^k), A_n^{k+1}-A_n^k\rangle\\
&+\bar{L} D_\psi(A_n^{k+1}, \underline{A}_n^k)  +\underline{L} D_\psi(A_n^k, \underline{A}_n^k).
\end{aligned} 
\end{eqnarray}

By summing the two inequalities together, we obtain
\begin{eqnarray}
\begin{aligned}
\Phi\left(A_1^{k+1}, \cdots, A_N^{k+1}\right) &\leq \Phi\left(A_1^k, \cdots, A_N^k\right)+\frac{\alpha}{2}\|A_n^{k+1}-A_n^k\|^2 
 \\ &+\langle\nabla_{A_n} f(A_1^k \cdots \underline{A}_n^k \cdots A_N^k)-\tilde{\nabla}_{A_n}f(A_1^k \cdots \underline{A}_n^k \cdots A_N^k), A_n^{k+1}-A_n^k\rangle\\
 &+\underline{L} D_\psi(A_n^k, \underline{A}_n^k)+\frac{1}{\eta_k}D_\psi(A_n^k, \tilde{A}_n^k)-\frac{1}{\eta_k} D_\psi(A_n^k, A_n^{k+1})\\
 &+\bar{L} D_\psi(A_n^{k+1}, \underline{A}_n^k)-\frac{1}{\eta_k}D_\psi(A_n^{k+1}, \tilde{A}_n^k)\\
 &\leq \Phi\left(A_1^k, \cdots, A_N^k\right)+\frac{\alpha+\bar\gamma_{k}}{2}\|A_n^{k+1}-A_n^k\|^2 \\
& + \frac{1}{2\bar\gamma_{k}}\|\nabla_{A_n} f(A_1^k \cdots \underline{A}_n^k \cdots A_N^k)-\tilde{\nabla}_{A_n}f(A_1^k \cdots \underline{A}_n^k \cdots A_N^k)\|_{*}^{2} \\
 &+\underline{L} D_\psi(A_n^k, \underline{A}_n^k)+\frac{1}{\eta_k}D_\psi(A_n^k, \tilde{A}_n^k)-\frac{1}{\eta_k} D_\psi(A_n^k, A_n^{k+1})\\
 &+\bar{L} D_\psi(A_n^{k+1}, \underline{A}_n^k)-\frac{1}{\eta_k}D_\psi(A_n^{k+1}, \tilde{A}_n^k),
\end{aligned}\label{ineq_01}
\end{eqnarray}
where the last inequality follows from $\langle a,b\rangle \le \frac{\gamma}{2}\|a\|^{2}+\frac{1}{2\gamma}\|b\|_{*}^{2}$ for any $\gamma_k>0$ and $\eta_{k}\le \bar{L}^{-1}$.

By \eqref{equ:3point}, we know
 \begin{align*}
D_{\psi}(A_n^{k+1}, \underline A_n^{k})=&D_{\psi}(A_n^{k+1}, A_n^{k})+D_{\psi}(A_n^{k}, \underline A_n^{k})+\langle \nabla\psi(A_n^{k})-\nabla\psi(\underline A_n^{k}), A_n^{k+1}-A_n^{k}\rangle,\\
D_{\psi}(A_n^{k+1}, \tilde A_n^{k})=&D_{\psi}(A_n^{k+1}, A_n^{k})+D_{\psi}(A_n^{k}, \tilde A_n^{k})+\langle \nabla\psi(A_n^{k})-\nabla\psi(\tilde A_n^{k}), A_n^{k+1}-A_n^{k}\rangle.
 \end{align*}
For the last two terms on the right of the  inequality \eqref{ineq_01}, it shows that
\begin{eqnarray}\begin{aligned}
&\bar{L} D_\psi(A_n^{k+1}, \underline{A}_n^k)-\frac{1}{\eta_k}D_\psi(A_n^{k+1}, \tilde{A}_n^k)\\
\le&\frac{1}{\eta_{k}}\left[D_\psi(A_n^{k+1}, \underline{A}_n^k)-D_\psi(A_n^{k+1}, \tilde{A}_n^k)\right]\\
=&\frac{1}{\eta_{k}}D_{\psi}(A_n^{k}, \underline A_n^{k})-\frac{1}{\eta_{k}}D_{\psi}(A_n^{k}, \tilde A_n^{k})+\frac{1}{\eta_{k}}\langle \nabla\psi(\tilde A_n^{k})-\nabla\psi(\underline A_n^{k}), A_n^{k+1}-A_n^{k}\rangle\\
\le&\frac{1}{\eta_{k}}D_{\psi}(A_n^{k}, \underline A_n^{k})-\frac{1}{\eta_{k}}D_{\psi}(A_n^{k}, \tilde A_n^{k})+\frac{1}{2\eta_{k}\gamma_{k}}\|\nabla\psi(\tilde A_n^{k})-\nabla\psi(\underline A_n^{k})\|^{2}+\frac{\gamma_{k}}{2\eta_{k}}\|A_n^{k+1}-A_n^{k}\|^{2}\\
\le&\frac{1}{\eta_{k}}D_{\psi}(A_n^{k}, \underline A_n^{k})-\frac{1}{\eta_{k}}D_{\psi}(A_n^{k}, \tilde A_n^{k})+\frac{M_{2}^{2}}{2\eta_{k}\gamma_{k}}\|\tilde A_n^{k}-\underline A_n^{k}\|^{2}+\frac{\gamma_{k}}{2\eta_{k}}\|A_n^{k+1}-A_n^{k}\|^{2}\\
\le&\frac{1}{\eta_{k}}D_{\psi}(A_n^{k}, \underline A_n^{k})-\frac{1}{\eta_{k}}D_{\psi}(A_n^{k}, \tilde A_n^{k})+\frac{M_{2}^{2}(\alpha_{k}-\beta_{k})^{2}}{2\eta_{k}\gamma_{k}}\|A_n^{k}-A_n^{k-1}\|^{2}+\frac{\gamma_{k}}{2\eta_{k}}\|A_n^{k+1}-A_n^{k}\|^{2}.
 \end{aligned}\label{ineq_0002}
 \end{eqnarray}

Suppose $n=\xi^{k}$ at the $k$-th iteration. We apply the conditional expectation operator $\mathbb{E}_{k}$ to the above inequality \eqref{ineq_01} and bounding the MSE term by \eqref{MSE_l22} in Definition \ref{vr_definition}, then we have
\begin{eqnarray}
\begin{aligned}
&\mathbb{E}_{k}[\Phi\left(A_1^{k+1}, \cdots, A_N^{k+1}\right)] \\
\leq &\Phi\left(A_1^k, \cdots, A_N^k\right)+\left(\frac{\alpha+\bar\gamma_{k}}{2}+\frac{\gamma_k}{2\eta_k}\right) \mathbb{E}_{k}[\|A_{\xi^{k}}^{k+1}-A_{\xi^{k}}^k\|^2]\\
& + \frac{1}{2\bar\gamma_{k}}\mathbb{E}_{k}[\|\nabla_{A_\xi^{k}} f(A_1^k \cdots \underline{A}_{\xi^{k}}^k \cdots A_N^k)-\tilde{\nabla}_{A_\xi^{k}}f(A_1^k \cdots \underline{A}_{\xi^{k}}^k \cdots A_N^k)\|_{*}^{2}]\\
& +\left(\frac{1}{\eta_{k}}+\underline{L}\right)\mathbb{E}_{k}[D_\psi({A}_{\xi^{k}}^k, \underline{A}_{\xi^{k}}^{k})]+\frac{M_{2}^{2}(\alpha_{k}-\beta_{k})^{2}}{2\eta_{k}\gamma_{k}}\mathbb{E}_{k}[\|A_{\xi^{k}}^{k}-A_{\xi^{k}}^{k-1}\|^{2}]-\frac{1}{\eta_k}\mathbb{E}_{k}[D_\psi({A}_{\xi^{k}}^k, A_{\xi^{k}}^{k+1})]\\
\leq & \Phi\left(A_1^k, \cdots, A_N^k\right)+\left(\frac{\alpha+\bar\gamma_{k}}{2}+\frac{\gamma_k}{2\eta_k}\right) \mathbb{E}_{k}[\|A^{k+1}-A^k\|^2]\\
&+\frac{1}{2\bar\gamma_{k}}(\Gamma_{k}+V_{1}(\left\|A^{k}-A^{k-1}\right\|^{2}+\left\|A^{k-1}-A^{k-2}\right\|^{2}))+\frac{M_{2}^{2}(\alpha_{k}-\beta_{k})^{2}}{2\eta_{k}\gamma_{k}}\|A^{k}-A^{k-1}\|^{2}\\
&+\left(\frac{1}{\eta_{k}}+\underline{L}\right)\left[D_\psi\left({A}_{1}^k, \underline{A}_{1}^{k}\right)+D_\psi\left({A}_{2}^k, \underline{A}_{2}^{k}\right)+\cdots+D_\psi\left({A}_{N}^k, \underline{A}_{N}^{k}\right)\right]\\
&-\frac{1}{\eta_k }\mathbb{E}_{k}[D_\psi\left({A}_{1}^k, A_{1}^{k+1}\right)+D_\psi\left({A}_{2}^k, A_{2}^{k+1}\right)+\cdots+D_\psi\left({A}_{N}^k, A_{N}^{k+1}\right)]\\
\leq & \Phi\left(A_1^k, \cdots, A_N^k\right)+\left(\frac{\alpha+\bar\gamma_{k}}{2}+\frac{\gamma_k}{2\eta_k}\right)\mathbb{E}_{k}[\|A^{k+1}-A^k\|^2]\\
&+\frac{1}{2\bar\gamma_{k}\tau}(\Gamma_{k}-\mathbb{E}_{k}[\Gamma_{k+1}])+\frac{V_{\Gamma}}{2\bar\gamma_{k}\tau }(\left\|A^{k}-A^{k-1}\right\|^{2}+\left\|A^{k-1}-A^{k-2}\right\|^{2})\\
&+\frac{V_{1}}{2\bar\gamma_{k}}\left(\left\|A^{k}-A^{k-1}\right\|^{2}+\left\|A^{k-1}-A^{k-2}\right\|^{2}\right)+\frac{M_{2}^{2}(\alpha_{k}-\beta_{k})^{2}}{2\eta_{k}\gamma_{k}}\|A^{k}-A^{k-1}\|^{2}\\
&+\frac{\delta-\epsilon}{\eta_{k}}D^N_\psi\left({A}^{k-1}, A^{k}\right)-\frac{1}{\eta_k }\mathbb{E}_{k}[D^N_\psi\left({A}^k, A^{k+1}\right)],
\end{aligned}\label{ineq_002}
\end{eqnarray}
where the last inequality follows from \eqref{Gamma_k1_k} in Definition \ref{vr_definition}. From \eqref{extra_ineq} and $\eta_{k}\le\min\{\eta_{k-1},\bar{L}^{-1}\}$, it presents that
\begin{eqnarray*}
\begin{aligned}
\left(\frac{1}{\eta_{k}}+\underline{L}\right)D_{\psi}({A}_{n}^k, \underline{A}_{n}^{k})\le \frac{\underline{L}\eta_{k}+1}{\eta_{k}}\frac{\delta-\epsilon}{1+\underline{L}\eta_{k-1}}D_{\psi}({A}_{n}^{k-1},{A}_{n}^{k})\le\frac{\delta-\epsilon}{\eta_{k}}D_{\psi}({A}_{n}^{k-1},{A}_{n}^{k}), \label{ineq_003}
\end{aligned}
\end{eqnarray*}
and we also use notation $D^N_{\psi}(A,B):=D_{\psi}(A_1,B_1)+\cdots+D_{\psi}(A_N,B_N)$ for simplicity.
Then we can get
\[
\begin{aligned}
&\mathbb{E}_{k}[\Phi\left(A_1^{k+1}, \cdots, A_N^{k+1}\right)]\\
\le& \Phi\left(A_1^k, \cdots, A_N^k\right) - \left(\frac{1}{\eta_{k}}-\alpha-\bar\gamma_{k}-\frac{\gamma_{k}}{\eta_{k}} \right)\mathbb{E}_{k}[D^N_{\psi}(A^{k},A^{k+1})]\\
&+\frac{1}{2\bar\gamma_{k}\tau}(\Gamma_{k}-\mathbb{E}_{k}[\Gamma_{k+1}])
+\left(\frac{\delta-\epsilon}{\eta_{k}}+\frac{V_{\Gamma}}{\bar\gamma_{k}\tau }+\frac{V_{1}}{\bar\gamma_{k}}+\frac{M_{2}^{2}(\alpha_{k}-\beta_{k})^{2}}{\eta_{k}\gamma_{k}}\right)D^N_{\psi}(A^{k-1},A^{k})\\
&+\left(\frac{V_{\Gamma}}{\gamma_{k}\tau}+\frac{V_{1}}{\gamma_{k}}\right)D^N_{\psi}(A^{k-2},A^{k-1}).
\end{aligned}
\]
Therefore, the results can be obtained by rearranging the above terms with $\bar\gamma_{k}=\sqrt{2(V_{\Gamma}/\tau+V_{1})}$. 
This completes the proof.
\end{proof}

Next, we introduce a new Lyapunov function and show it is
monotonically decreasing in expectation. For simplicity, we denote $$\Phi^k=\Phi\left(A_1^{k}, \cdots, A_N^{k}\right).$$
\begin{lemma}\label{lyapunov_descent}
Suppose the same conditions with Lemma \ref{lemma_Phi_kk1} hold, and  the stepsize satisfies 
\begin{eqnarray}
\eta_{k}\le \min\left\{\eta_{k-1}, \frac{1}{\bar{L}}, \frac{1-\delta-2|\alpha_{k}-\beta_{k}|M_{2}}{\alpha+2\bar{\gamma}}\right\}, \quad \forall\, k>0. \label{stepsize_set}
\end{eqnarray}
Let $\{A_{n}^{k}\}_{k>0}$ with $n\in\{1,\dots,N\}$ be a sequence generated by iTableSMD (Algorithm \ref{iTableSMD}) 
and define the following Lyapunov sequence 
\begin{eqnarray}
\begin{aligned}
\Psi_{k+1}:= &\eta_{k}\left(\Phi^{k+1} -\mathcal{V}(\Phi)\right)  + \left(1-\eta_{k}\alpha-\eta_{k}\bar\gamma-\gamma_{k}-\frac{\epsilon}{3}\right)D^N_{\psi}(A^{k},A^{k+1})\\
&+\eta_{k}\left(\frac{\bar\gamma}{2}+\frac{\epsilon}{3\eta_{k}}\right)D^N_{\psi}(A^{k-1},A^{k})+\frac{\eta_{k}}{2\tau \bar\gamma}\Gamma_{k+1}, \label{lyapunov_function}
\end{aligned}
\end{eqnarray}
where  $\gamma_{k}=|\alpha_{k}-\beta_{k}|M_{2}$.
Then, for all $k\in\mathbb{N}$, we have
\begin{eqnarray}
\begin{aligned}
\mathbb{E}_{k}[\Psi_{k+1}]\le\Psi_{k} -\frac{\epsilon}{3}(\mathbb{E}_{k}[D^N_{\psi}(A^{k},A^{k+1})]+D^N_{\psi}(A^{k-1},A^{k})+D^N_{\psi}(A^{k-2},A^{k-1})).
\end{aligned}\label{decent_inequality_01}
\end{eqnarray} 
\end{lemma}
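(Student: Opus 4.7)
The plan is to bootstrap the descent inequality of Lemma \ref{lemma_Phi_kk1} into a Lyapunov descent by scaling it by $\eta_k$ and packaging the residual first-difference terms into $\Psi_{k+1}$. First I would multiply both sides of Lemma \ref{lemma_Phi_kk1} by $\eta_k$ and use the key algebraic simplification $\gamma_k=|\alpha_k-\beta_k|M_2$, which converts the awkward $\frac{M_2^2(\alpha_k-\beta_k)^2}{\eta_k\gamma_k}$ into $\gamma_k/\eta_k$. The scaled inequality then reads
\[
\eta_k\mathbb{E}_k[\Phi^{k+1}]+\tfrac{\eta_k}{2\bar\gamma\tau}\mathbb{E}_k[\Gamma_{k+1}]+(1-\eta_k\alpha-\eta_k\bar\gamma-\gamma_k)\mathbb{E}_k[D^N_\psi(A^k,A^{k+1})]\le \eta_k\Phi^k+\tfrac{\eta_k}{2\bar\gamma\tau}\Gamma_k+\bigl(\delta-\epsilon+\gamma_k+\tfrac{\eta_k\bar\gamma}{2}\bigr)D^N_\psi(A^{k-1},A^k)+\tfrac{\eta_k\bar\gamma}{2}D^N_\psi(A^{k-2},A^{k-1}),
\]
and subtracting $\eta_k\mathcal{V}(\Phi)$ from both sides converts the objective terms to the form that appears in $\Psi_{k+1}$.

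Next I would compute $\mathbb{E}_k[\Psi_{k+1}]-\Psi_k$ by adding and subtracting the three distance terms and the $\Gamma$ term that appear in the Lyapunov sequence, and substitute the bound above. The $\Phi^k-\mathcal{V}(\Phi)$ contribution produces a factor $\eta_k-\eta_{k-1}\le 0$, which is nonpositive after multiplication by $\Phi^k-\mathcal{V}(\Phi)\ge 0$ (using Assumption \ref{assume_01}(vi)). The same sign argument applies to $(\eta_k-\eta_{k-1})\Gamma_k/(2\bar\gamma\tau)$ using $\Gamma_k\ge 0$, and to the $D^N_\psi(A^{k-2},A^{k-1})$ coefficient $\tfrac{(\eta_k-\eta_{k-1})\bar\gamma}{2}-\tfrac{\epsilon}{3}$, which is at most $-\epsilon/3$ as required.

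The delicate step, which I expect to be the main obstacle, is the coefficient of $D^N_\psi(A^{k-1},A^k)$. After collecting terms, its coefficient simplifies to $\delta+\gamma_k+\gamma_{k-1}+\eta_k\bar\gamma+\eta_{k-1}(\alpha+\bar\gamma)-1-\epsilon/3$, and showing this is at most $-\epsilon/3$ amounts to proving $\delta+\gamma_k+\gamma_{k-1}+\eta_k\bar\gamma+\eta_{k-1}(\alpha+\bar\gamma)\le 1$. This mixes step sizes and extrapolation magnitudes from two consecutive iterations, so neither the step size bound at $k$ alone nor at $k-1$ alone suffices. The natural approach is a case split on whether $\gamma_k\le\gamma_{k-1}$ or $\gamma_k>\gamma_{k-1}$: in the first case, one applies the step size condition $\eta_{k-1}(\alpha+2\bar\gamma)+2\gamma_{k-1}+\delta\le 1$ at step $k-1$ together with $\eta_k\le\eta_{k-1}$ to dominate the sum; in the second case, the analogous bound at step $k$ combined with monotonicity of $\eta_k$ is used. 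This handles the critical coupling induced by the inertial parameters $(\alpha_k,\beta_k)$.

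Finally, assembling the three nonpositive contributions and the explicit $-\tfrac{\epsilon}{3}$ slacks built into the coefficients of $\Psi_k$ yields the target inequality \eqref{decent_inequality_01}, proving that $\Psi_k$ decreases in conditional expectation by at least $\tfrac{\epsilon}{3}$ times the sum of the three consecutive Bregman distances. The expected payoff is that this descent, together with a lower bound on $\Psi_k$ (which follows from $\Phi^k\ge\mathcal{V}(\Phi)$ and the nonnegativity of Bregman distances under Assumption \ref{assume_01}(ii)), will drive all three distances to zero in expectation, which is the cornerstone for the subsequent subsequential and global convergence statements.
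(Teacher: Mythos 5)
Your computations track the paper's proof essentially line for line: you multiply Lemma \ref{lemma_Phi_kk1} by $\eta_k$ (so that $\tfrac{M_2^2(\alpha_k-\beta_k)^2}{\eta_k\gamma_k}$ becomes $\gamma_k$ under the choice $\gamma_k=|\alpha_k-\beta_k|M_2$), form $\mathbb{E}_k[\Psi_{k+1}]-\Psi_k$, absorb the $\Phi^k-\mathcal{V}(\Phi)$, $\Gamma_k$ and $D^N_{\psi}(A^{k-2},A^{k-1})$ terms via $\eta_k\le\eta_{k-1}$ and nonnegativity, and reduce everything to the sign of the coefficient of $D^N_{\psi}(A^{k-1},A^{k})$. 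Your expression $\delta+\gamma_k+\gamma_{k-1}+\eta_k\bar\gamma+\eta_{k-1}(\alpha+\bar\gamma)-1-\epsilon/3$ is exactly the negative of the paper's $1-\delta-\eta_{k-1}\alpha-(\eta_{k-1}+\eta_k)\bar\gamma-\gamma_{k-1}-\tfrac{M_2^2(\alpha_k-\beta_k)^2}{\gamma_k}$ minus the $\epsilon/3$ slack, so up to this point the two arguments coincide.

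The only divergence is the final verification that $\delta+\gamma_k+\gamma_{k-1}+\eta_k\bar\gamma+\eta_{k-1}(\alpha+\bar\gamma)\le 1$. Your Case 1 ($\gamma_k\le\gamma_{k-1}$) is fine: bound $\gamma_k$ by $\gamma_{k-1}$, $\eta_k$ by $\eta_{k-1}$, and invoke \eqref{stepsize_set} at index $k-1$. Your Case 2 ($\gamma_k>\gamma_{k-1}$), however, does not close as described: the step-size bound at index $k$ controls $\eta_k(\alpha+2\bar\gamma)\le 1-\delta-2\gamma_k$, whereas the surviving term is $\eta_{k-1}(\alpha+\bar\gamma)$, and monotonicity $\eta_k\le\eta_{k-1}$ runs in the wrong direction, so you cannot trade $\eta_{k-1}$ for $\eta_k$ there. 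To be fair, this is the same soft spot in the paper's own finish: the paper assumes $\gamma_k\ge\gamma_{k-1}$ (justified only in a footnote by the practical choices $\alpha_k=c_1\tfrac{k-1}{k+2}$, $\beta_k=c_2\tfrac{k-1}{k+2}$), bounds the coefficient by $1-\delta-\eta_{k-1}\alpha-2\eta_{k-1}\bar\gamma-2\gamma_k$, and then cites \eqref{stepsize_set}, which, read literally, controls $\eta_k$ rather than $\eta_{k-1}$. Both arguments become airtight if the step size is constant ($\eta_{k-1}=\eta_k$), or if the step-size condition is imposed in the shifted form $\eta_{k-1}(\alpha+2\bar\gamma)\le 1-\delta-2\gamma_k$ (natural when $\gamma_k$ is nondecreasing). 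So your route is the same as the paper's, with your case split being a sensible attempt to avoid the paper's monotonicity assumption on $\gamma_k$; just be explicit that the second case needs the constant-step or shifted-condition hypothesis rather than plain monotonicity of $\eta_k$.
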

\begin{proof}
From Lemma \ref{lemma_Phi_kk1}, it shows that
\begin{eqnarray}
\begin{aligned}
&\eta_{k}(\Phi^{k})-\mathcal{V}(\Phi))\\
\ge&\eta_{k}(\mathbb{E}_{k}[\Phi^{k+1}]-\mathcal{V}(\Phi)) +  (1-\eta_{k}\alpha-\eta_{k}\bar\gamma-\gamma_{k})\mathbb{E}_{k}[D^N_{\psi}(A^{k},A^{k+1})]\\
&+\frac{\eta_{k}}{2\bar\gamma\tau}(\mathbb{E}_{k}[\Gamma_{k+1}]-\Gamma_{k})
-\left(\delta-\epsilon+
\frac{\bar \gamma \eta_{k}}{2}+\frac{M_{2}^{2}(\alpha_{k}-\beta_{k})^{2}}{\gamma_{k}}\right)D^N_{\psi}(A^{k-1},A^{k})-\frac{\bar\gamma\eta_{k}}{2 }D^N_{\psi}(A^{k-2},A^{k-1}) \label{inequality_01}
\end{aligned}
\end{eqnarray}
Combining \eqref{lyapunov_function} with $\eta_{k}\le\eta_{k-1}$, we have
\begin{align*}
&\Psi_{k}-\mathbb{E}_{k}[\Psi_{k+1}]\\
=&\eta_{k-1}(\Phi^{k}-\mathcal{V}(\Phi))  +  \left(1-\eta_{k-1}\alpha-\eta_{k-1}\bar\gamma-\gamma_{k-1}-\frac{\epsilon}{3}\right)D^N_{\psi}(A^{k-1}, A^{k})-\frac{\eta_{k}}{2\tau\bar\gamma}\mathbb{E}_{k}[\Gamma_{k+1}]\\
&+\frac{\eta_{k-1}}{2\tau\bar\gamma}\Gamma_{k}+\eta_{k-1}\left(\frac{\bar\gamma}{2}+\frac{\epsilon}{3\eta_{k-1}}\right)D^N_{\psi}(A^{k-2},A^{k-1})-\eta_{k}(\mathbb{E}_{k}[\Phi^{k+1}]-\mathcal{V}(\Phi))\\
&-\eta_{k}\left(\frac{\bar\gamma}{2}+\frac{\epsilon}{3\eta_{k}}\right)D^N_{\psi}(A^{k-1},A^{k}) -  (1-\eta_{k}\alpha-\eta_{k}\bar\gamma-\gamma_{k}-\frac{\epsilon}{3})\mathbb{E}_{k}[D^N_{\psi}(A^{k},A^{k+1})]\\
\ge&\eta_{k}(\Phi^{k})-\mathcal{V}(\Phi)) + \left(1-\eta_{k-1}\alpha-\eta_{k-1}\bar\gamma-\gamma_{k-1}-\frac{\epsilon}{3}\right)D^N_{\psi}(A^{k-1},A^{k})-\frac{\eta_{k}}{2\tau\bar\gamma}\mathbb{E}_{k}[\Gamma_{k+1}]\\
&+\frac{\eta_{k}}{2\tau\bar\gamma}\Gamma_{k}+\eta_{k-1}\left(\frac{\bar\gamma}{2}+\frac{\epsilon}{3\eta_{k-1}}\right)D^N_{\psi}(A^{k-2},A^{k-1})
-\eta_{k}(\mathbb{E}_{k}[\Phi^{k+1}]-\mathcal{V}(\Phi))\\
&-\eta_{k}\left(\frac{\bar\gamma}{2}+\frac{\epsilon}{3\eta_{k}}\right)D^N_{\psi}(A^{k-1},A^{k}) -  \left(1-\eta_{k}\alpha-\eta_{k}\bar\gamma-\gamma_{k}-\frac{\epsilon}{3}\right)\mathbb{E}_{k}[D^N_{\psi}(A^{k},A^{k+1})]\\
\ge&\left(1-\delta-\eta_{k-1}\alpha-(\eta_{k-1}+\eta_{k})\bar\gamma-\gamma_{k-1}-\frac{M_{2}^{2}(\alpha_{k}-\beta_{k})^{2}}{\gamma_{k}}\right)D^N_{\psi}(A^{k-1},A^{k})\\
&+\frac{\epsilon}{3}(\mathbb{E}_{k}[D^N_{\psi}(A^{k},A^{k+1})]+D^N_{\psi}(A^{k-1},A^{k})+D^N_{\psi}(A^{k-2},A^{k-1})).
\end{align*}
Let $\gamma_{k}=|\alpha_{k}-\beta_{k}|M_{2}$, and assume $\gamma_{k}\ge \gamma_{k-1}$\footnote{In numerical experiments in \cite{PockS16, WangH23c}, there is $\alpha_{k}=c_{1}\frac{k-1}{k+2}$ and $\beta_{k}=c_{2}\frac{k-1}{k+2}$. Hence, we have this inequality holds.}, then we have

\begin{align*}
&\Psi_{k}-\mathbb{E}_{k}[\Psi_{k+1}]\\
\ge&\left(1-\delta-\eta_{k-1}\alpha-2\eta_{k-1}\bar{\gamma}-\gamma_{k-1}-\gamma_{k}\right)D^N_{\psi}(A^{k-1},A^{k})\\
&+\frac{\epsilon}{3}(\mathbb{E}_{k}[D^N_{\psi}(A^{k},A^{k+1})]+D^N_{\psi}(A^{k-1},A^{k})+D^N_{\psi}(A^{k-2},A^{k-1}))\\
\ge&\left(1-\delta-\eta_{k-1}\alpha-2\eta_{k-1}\bar{\gamma}-2\gamma_{k}\right)D^N_{\psi}(A^{k-1},A^{k})\\
&+\frac{\epsilon}{3}(\mathbb{E}_{k}[D^N_{\psi}(A^{k},A^{k+1})]+D^N_{\psi}(A^{k-1},A^{k})+D^N_{\psi}(A^{k-2},A^{k-1}))\\
\ge&\frac{\epsilon}{3}(\mathbb{E}_{k}[D^N_{\psi}(A^{k},A^{k+1})]+D^N_{\psi}(A^{k-1},A^{k})+D^N_{\psi}(A^{k-2},A^{k-1})),
\end{align*}
where the second and the last inequality follow from \eqref{inequality_01} and \eqref{stepsize_set}, respectively. This completes the proof.

\end{proof}

\begin{theorem}\label{subsequence_convergence}
Let $\{A_{n}^{k}\}_{k>0}$ with $n\in\{1,\dots,N\}$ be a sequence generated by iTableSMD algorithm. Then, the following statements hold.
\begin{itemize}
\item[(i)] The sequence $\{\mathbb{E}[\Psi_{k}]\}_{k\in\mathbb{N}}$ is nonincreasing.
\item[(ii)] $\sum\limits_{k=1}^{+\infty}\mathbb{E}[D^N_{\psi}(A^{k-1},A^{k})]<+\infty$,
and    the sequence $\{\mathbb{E}[ D^N_{\psi}(A^{k-1},A^{k}) ]\}$  
converges to zero. 
\item[(iii)] $\min\limits_{1\le k\le K}\mathbb{E}[D^N_{\psi}(A^{k-1},A^{k})]\le \frac{3\Psi_{1}}{\epsilon K}$.
\end{itemize}
\end{theorem}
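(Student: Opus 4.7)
The three conclusions should all fall out of Lemma~\ref{lyapunov_descent} by taking full expectations, telescoping, and exploiting a lower bound for $\Psi_k$. The plan is therefore to first verify that $\Psi_k\ge 0$ (so that after telescoping we control the partial sums of the Bregman gap terms), then apply standard monotone/telescoping arguments. The main technical point is justifying the lower bound $\Psi_k\ge 0$ from the stepsize rule \eqref{stepsize_set}; the rest is bookkeeping.

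\textbf{Boundedness below of $\Psi_k$.} First I would check that each term in \eqref{lyapunov_function} is nonnegative. The term $\eta_k(\Phi^{k+1}-\mathcal{V}(\Phi))$ is nonnegative by Assumption~\ref{assume_01}(vi); the Bregman terms $D^N_\psi(\cdot,\cdot)$ are nonnegative by strong convexity of $\psi$; $\Gamma_{k+1}\ge 0$ by Definition~\ref{vr_definition}. The only delicate coefficient is $1-\eta_k\alpha-\eta_k\bar\gamma-\gamma_k-\epsilon/3$ in front of $D^N_\psi(A^k,A^{k+1})$. From \eqref{stepsize_set} we have $\eta_k(\alpha+2\bar\gamma)\le 1-\delta-2|\alpha_k-\beta_k|M_2 = 1-\delta-2\gamma_k$, so
\begin{equation*}
1-\eta_k\alpha-\eta_k\bar\gamma-\gamma_k \;\ge\; \delta+\eta_k\bar\gamma+\gamma_k \;\ge\; \delta \;>\; \epsilon \;>\; \tfrac{\epsilon}{3},
\end{equation*}
and hence the coefficient is strictly positive. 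Therefore $\Psi_k\ge 0$ for every $k$.

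\textbf{Parts (i) and (ii).} Taking full expectation in \eqref{decent_inequality_01} and using the tower property gives
\begin{equation*}
\mathbb{E}[\Psi_{k+1}] \;\le\; \mathbb{E}[\Psi_k] - \tfrac{\epsilon}{3}\bigl(\mathbb{E}[D^N_\psi(A^k,A^{k+1})]+\mathbb{E}[D^N_\psi(A^{k-1},A^k)]+\mathbb{E}[D^N_\psi(A^{k-2},A^{k-1})]\bigr),
\end{equation*}
which establishes (i) since the subtracted quantity is nonnegative. Summing this inequality from $k=1$ to $k=K$, telescoping, and using $\mathbb{E}[\Psi_{K+1}]\ge 0$ yields
\begin{equation*}
\tfrac{\epsilon}{3}\sum_{k=1}^{K}\mathbb{E}[D^N_\psi(A^{k-1},A^k)] \;\le\; \Psi_1 - \mathbb{E}[\Psi_{K+1}] \;\le\; \Psi_1.
\end{equation*}
Letting $K\to\infty$ proves the summability claim $\sum_{k=1}^\infty\mathbb{E}[D^N_\psi(A^{k-1},A^k)]<\infty$, and summability forces the general term $\mathbb{E}[D^N_\psi(A^{k-1},A^k)]\to 0$, which is the second assertion in (ii).

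\textbf{Part (iii).} Using the elementary inequality $\min_{1\le k\le K} a_k \le \tfrac{1}{K}\sum_{k=1}^K a_k$ with $a_k=\mathbb{E}[D^N_\psi(A^{k-1},A^k)]$, together with the telescoping bound above, gives
\begin{equation*}
\min_{1\le k\le K}\mathbb{E}[D^N_\psi(A^{k-1},A^k)] \;\le\; \frac{1}{K}\sum_{k=1}^{K}\mathbb{E}[D^N_\psi(A^{k-1},A^k)] \;\le\; \frac{3\Psi_1}{\epsilon K},
\end{equation*}
which is the claimed $\mathcal{O}(1/K)$ sublinear rate. The only nontrivial part of the argument is the coefficient verification above; once $\Psi_k\ge 0$ is secured, the telescoping and averaging steps are routine.
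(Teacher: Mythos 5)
Your proposal is correct and follows essentially the same route as the paper: take expectations in the descent inequality of Lemma~\ref{lyapunov_descent}, telescope, use $\Psi_k\ge 0$ (which the paper asserts directly from \eqref{stepsize_set}), and bound the minimum by the average. The only difference is that you spell out the coefficient verification $1-\eta_k\alpha-\eta_k\bar\gamma-\gamma_k\ge\delta>\epsilon/3$ explicitly, which the paper leaves implicit; this is a harmless (indeed helpful) elaboration, not a different argument.
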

\begin{proof}
\begin{itemize}
\item[(i)] This statement  follows   directly from Lemma \ref{lyapunov_descent} and $\epsilon>0$.
\item[(i)] By  summing \eqref{decent_inequality_01} from $k=0$ to a positive integer $K$, we have
\[
\sum_{k=1}^{K}\mathbb{E}[D^N_{\psi}(A^{k-1},A^{k})]\le \frac{3}{\epsilon}\mathbb{E}[\Psi_{1}-\Psi_{K+1}]\le \frac{3}{\epsilon}\Psi_{1},
\]
where the last inequality follows from $\Psi_{k}\ge 0$ for any $k>0$ due to \eqref{stepsize_set}. Taking the limit as $K\rightarrow+\infty$, we have $\sum_{k=1}^{+\infty}\mathbb{E}[D^N_{\psi}(A^{k-1},A^{k})]<+\infty$. Then we may deduce that  the sequence $\{\mathbb{E}[ D^N_{\psi}(A^{k-1},A^{k})]\}$ converges to zero. 
\item[(iii)] We have
\[
K\min_{1\le k\le K}\mathbb{E}[D^N_{\psi}(A^{k-1},A^{k})]\le\sum_{k=1}^{K}\mathbb{E}[D^N_{\psi}(A^{k-1},A^{k})]\le\frac{3}{\epsilon}\Psi_{1},
\]
which yields the desired result. 
\end{itemize}
This completes the proof.
\end{proof}

\subsection{Global convergence analysis}
In this subsection,  we present the analysis of iTableSMD algorithm with the expected squared distance of the subgradient and global convergence. In addition, We impose another stronger assumption on function $f$. 
\begin{assumption}\label{assume_03}
     
		The partial gradient $\nabla_{A_{i}} f$ is Lipschitz continuous with modulus $M_{1}$ on bounded sets of $\Pi_{n=1}^{N}\mathbb{R}^{I_{n}\times R}$. Namely, for any two points $A$ and $\hat{A}$, where  $A:=(A_{1},\dots,A_{i},\dots,A_{N})$, $\hat{A}:=(A_{1},\dots,A_{i-1},\hat{A}_{i},A_{i+1},\dots,A_{N})$ $\in\Pi_{n=1}^{N}\mathbb{R}^{I_{n}\times R}$, it shows that
		\[
		\|\nabla_{A_{i}}f(A)-\nabla_{A_{i}}f(\hat{A})\|\le M_1\|A_{i}-\hat{A}_{i}\|, \quad i=1,2,\dots,N.
		\]
\end{assumption}

 Under Definition \ref{vr_definition}  and the definition of  SAGA \cite{DefazioBL14} and SARAH \cite{NguyenLST17}, we have the following proposition.
\begin{proposition}\label{vr_gra}
Under Assumption \ref{assume_03}, we have the following two statements hold.
\begin{itemize}
\item[(i)] The SAGA gradient estimator  \cite{DefazioBL14} is defined as
\begin{eqnarray}
	\tilde{\nabla}^{SAGA}_{n}f(\underline{A}^{k}):=\frac{1}{{I_n}\left|\mathcal{F}_{n}^{k}\right|}(\sum_{j\in \mathcal{F}_{n}^{k}}\nabla_{A_{n}}f_{j}(\underline A^{k})-\nabla_{A_{n}}f_{j}((\phi^{k})^{j}))+\frac{1}{J_{n}}\sum_{i=1}^{J_{n}}\nabla_{A_{n}}f_{i}((\phi^{k})^{i}),\label{vr_gradient}
\end{eqnarray}
where $\underline A^{k}:=(A_{1}^{k},\dots, A_{n-1}^{k},\underline{A}_{n}^{k},A_{n+1}^{k},\dots, A_{N}^{k})$,  and the variable $(\phi^{k})^{i}$ follow the update rules   $(\phi^{k})^{i}=\underline{A}^{k-1}$ if $i\in \mathcal{F}_{n}^{k}$ and $(\phi^{k})^{i}=(\phi^{k-1})^{i}$ otherwise. A set of sampled mode-$n$ fibers is indexed by $\mathcal{F}_{n}^{k}\subset\{1,\dots,J_{n}\}$ with $|\mathcal{F}_{n}^{k}|=B$. Then it is variance reduced with
\[
\Gamma_{k+1}:=\frac{1}{B J_{n}}\sum_{i=1}^{J_{n}}\|\nabla_{A_{n}}f_{i}(\underline A^{k})-\nabla_{A_{n}}f_{i}((\phi^{k})^{i})\|_{*}^{2},
\]
\[
\Upsilon_{k+1}:=\frac{1}{\sqrt{B J_{n}}}\sum_{i=1}^{J_{n}}\|\nabla_{A_{n}}f_{i}(\underline A^{k})-\nabla_{A_{n}}f_{i}((\phi^{k})^{i}))\|_{*}.
\]
The constants $\tau=\frac{B}{2J_n}$, $V_{\Gamma}=2J_{n}+\frac{4J_{n}^2}{B}M_{1}^{2}$, $V_{1}=M_{1}^2, V_{2}=M_{1}$.
\item[(ii)] The SARAH gradient estimator \cite{NguyenLST17} which is defined as
\begin{eqnarray*}
&&\tilde{\nabla}^{SARAH}_{n}f(\underline{A}^{k})\\
&=&
\left\{\begin{array}{ll}
\nabla_{A_{n}} f(\underline{A}^{k}),\,\,&  \mbox{w.p.}\,\,\frac{1}{p},\\
\frac{1}{B}(\underset{j\in \mathcal{F}_{n}^{k}}{\sum}\nabla_{A_{n}} f_{j}(\underline{A}^{k})-\nabla_{A_{n}} f_{j}(\underline{A}^{k-1}))+\tilde{\nabla}^{SARAH}_{n}f(\underline{A}^{k-1}),&\mbox{otherwise.}
\end{array}\right.
\end{eqnarray*}
Here ``w.p. $\frac{1}{p}$'' means with probability $\frac{1}{p} \in (0, 1]$. 
Then it is variance reduced with
\begin{eqnarray*}
\Gamma_{k+1}=\|\tilde{\nabla}^{SARAH}_{n}f(\underline{A}^{k})-\nabla_{A_{n}} f(\underline{A}^{k})\|_{*}^{2},\quad \Upsilon_{k+1}=\|\tilde{\nabla}^{SARAH}_{n}f(\underline{A}^{k})-\nabla_{A_{n}} f(\underline{A}^{k})\|_{*},
\end{eqnarray*}
and constants $\tau=\frac{1}{p}$, $V_{1}=V_{\Gamma}=2M_{1}^{2}$, $V_{2}=2M_{1}$.
\end{itemize}
\end{proposition}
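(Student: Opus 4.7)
The strategy is to verify each of the three conditions of Definition \ref{vr_definition} separately for the SAGA and SARAH estimators. The common ingredients are (a) unbiasedness or controlled bias of the estimator, (b) the standard variance bound for uniform sampling without replacement, and (c) the Lipschitz continuity of $\nabla_{A_n} f_i$ from Assumption \ref{assume_03}, which converts differences of gradients into differences of iterates. A preparatory step used throughout is to relate $\|\underline{A}^k - \underline{A}^{k-1}\|^2$ to $\|A^k - A^{k-1}\|^2 + \|A^{k-1} - A^{k-2}\|^2$ via the definition $\underline{A}_n^k = A_n^k + \beta^k(A_n^k - A_n^{k-1})$ with $\beta^k \in [0,1)$, since the MSE naturally involves the extrapolated iterate $\underline{A}^k$ while Definition \ref{vr_definition} measures displacements in $A^k$.

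For SAGA, I would first exploit the unbiasedness $\mathbb{E}_k[\tilde{\nabla}^{SAGA}_{n}f(\underline{A}^k)] = \nabla_{A_n} f(\underline{A}^k)$ (by construction of the control variate), which reduces the MSE computation to a variance. Applying the standard inequality $\mathrm{Var}(\bar{X}_B) \le \frac{1}{B}\mathbb{E}\|X_1\|_*^2$ to $X_j = \nabla_{A_n} f_j(\underline{A}^k) - \nabla_{A_n} f_j((\phi^k)^j)$ yields $\mathbb{E}_k\|\tilde{\nabla}^{SAGA} - \nabla f\|_*^2 \le \Gamma_{k+1}$ directly, and hence \eqref{MSE_l22} after splitting $\Gamma_{k+1}$ around $(\underline{A}^{k-1},(\phi^{k-1})^i)$ and invoking Lipschitzness. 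The $\ell_1$ version \eqref{MSE_l2} follows from Jensen/Cauchy--Schwarz. For the geometric decay (ii), I would condition on whether each index $i$ lies in the random batch $\mathcal{F}_n^k$: with probability $B/J_n$ the memory $(\phi)^i$ is refreshed, contributing a term of order $M_1^2\|\underline{A}^k - \underline{A}^{k-1}\|^2$; with probability $1 - B/J_n$ the old memory is retained and can be expanded using $\|a-b\|^2 \le (1+t)\|a-c\|^2 + (1+1/t)\|c-b\|^2$ with a suitably chosen $t$. Tracking constants should land the contraction at $\tau = B/(2J_n)$ and $V_\Gamma = 2J_n + 4J_n^2 M_1^2 / B$.

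For SARAH, the MSE bound (i) is essentially by construction, since $\Gamma_{k+1} := \|\tilde{\nabla}^{SARAH} - \nabla f(\underline{A}^k)\|_*^2$ and the $\ell_1$ counterpart follows from Jensen. The core step is the geometric decay. Writing $\epsilon_k := \tilde{\nabla}^{SARAH}_k - \nabla f(\underline{A}^k)$, with probability $1/p$ we have $\epsilon_k = 0$; otherwise, defining $U_k := \tfrac{1}{B}\sum_{j \in \mathcal{F}_n^k}[\nabla f_j(\underline{A}^k) - \nabla f_j(\underline{A}^{k-1})]$ which is conditionally unbiased for $\nabla f(\underline{A}^k) - \nabla f(\underline{A}^{k-1})$, one obtains the telescoping identity $\epsilon_k = \epsilon_{k-1} + (U_k - \mathbb{E}_k[U_k])$. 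The centered term is conditionally independent of $\epsilon_{k-1}$, so the cross term vanishes in expectation and sampling-without-replacement variance gives $\mathbb{E}_k\|\epsilon_k\|_*^2 \le (1-1/p)\bigl(\|\epsilon_{k-1}\|_*^2 + M_1^2\|\underline{A}^k - \underline{A}^{k-1}\|^2/B\bigr)$. Translating the extrapolated displacement to $\|A^k - A^{k-1}\|^2 + \|A^{k-1} - A^{k-2}\|^2$ yields \eqref{Gamma_k1_k} with $\tau = 1/p$ and $V_1 = V_\Gamma = 2M_1^2$. Condition (iii) then follows for both estimators: iterating the geometric decay under the hypothesis $\mathbb{E}\|A^k - A^{k-1}\|^2 \to 0$ gives $\mathbb{E}\Gamma_k \to 0$ by a standard telescoping/Kronecker-type lemma, and $\mathbb{E}\Upsilon_k \to 0$ follows from $\mathbb{E}\Upsilon_k \le \sqrt{\mathbb{E}\Upsilon_k^2}$ together with the $\ell_1$-type bounds derived alongside.

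The main obstacle I anticipate is the careful bookkeeping needed to pass from displacements of the extrapolated iterates $\underline{A}^k$ back to displacements of the base iterates $A^k$, since both the MSE and the table-update recursion are naturally expressed in the former while \eqref{MSE_l22}--\eqref{Gamma_k1_k} require the latter; this is precisely where the inertial acceleration introduces extra terms absent from the classical SAGA/SARAH analyses. Secondarily, for SAGA the constant $V_\Gamma$ must be shown to scale correctly in both $J_n$ and $1/B$; achieving the contraction factor exactly $(1 - B/(2J_n))$ rather than a weaker one requires the asymmetric Young inequality with a carefully tuned parameter.
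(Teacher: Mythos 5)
Your proposal is correct and follows essentially the same route as the paper's proof: for SAGA it uses unbiasedness plus the independent-zero-mean variance bound, Jensen for the first-moment estimate, conditioning on the table-refresh probability $B/J_n$ combined with the Young-type inequality tuned at $\delta=B/(2J_n)$, and the conversion of extrapolated displacements $\|\underline{A}^k-\underline{A}^{k-1}\|^2$ back to $\|A^k-A^{k-1}\|^2+\|A^{k-1}-A^{k-2}\|^2$. The only difference is that the paper handles the SARAH recursion and condition (iii) by citing Lemma 5 of \cite{WangH23} and Appendix B of \cite{DriggsTLDS2020}, whereas you sketch those standard arguments explicitly, consistently with what those references do.
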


\begin{proof}

From the definition of SAGA stochastic gradient estimator $\tilde{\nabla}^{SAGA}f(\underline{A}^{k})$ and the Lipschitz continuity of $\nabla_{A_{i}}f(\cdot)$, it shows that
\begin{align*}
&\mathbb{E}_{k}\|\tilde{\nabla}^{SAGA}f(\underline{A}^{k})-\nabla f(\underline{A}^{k})\|_{*}^{2}\\
=&\mathbb{E}_{k}\|\frac{1}{I_n B}(\sum_{j\in \mathcal{F}_{n}^{k}}\nabla_{A_{n}}f_{j}(\underline A^{k})-\nabla_{A_{n}}f_{j}((\phi^{k})^{j}))+\frac{1}{J_{n}}\sum_{i=1}^{J_{n}}\nabla_{A_{n}}f_{i}((\phi^{k})^{i})-\nabla f(\underline{A}^{k})\|_{*}^{2}\\
\le&\frac{1}{B^{2}I^2_n}\mathbb{E}_{k}\sum_{j\in \mathcal{F}_{n}^{k}}\|\nabla_{A_{n}}f_{j}(\underline A^{k})-\nabla_{A_{n}}f_{j}((\phi^{k})^{j})\|_{*}^{2}\\
=&\frac{1}{BI^2_nJ_{n}}\sum_{i=1}^{J_{n}}\|\nabla_{A_{n}}f_{i}(\underline A^{k})-\nabla_{A_{n}}f_{i}((\phi^{k})^{i})\|_{*}^{2}\\
\le&\frac{1}{BJ_{n}}\sum_{i=1}^{J_{n}}\|\nabla_{A_{n}}f_{i}(\underline A^{k})-\nabla_{A_{n}}f_{i}((\phi^{k})^{i})\|_{*}^{2}\, ,
\end{align*}
where the last inequality follows from the fact that $\mathbb{E}_{k}\|y_{1}+\cdots+y_{t}\|_{*}^{2}=\mathbb{E}_{k}\|y_{1}\|_{*}^{2}+\cdots+\mathbb{E}_{k}\|y_{t}\|_{*}^{2}$ for any independent random variables $y_{i} (i=1,\dots,t)$ with $\mathbb{E}_{k}[y_{i}]=0$ for all $i$. Combined with Jensen's inequality, we can get 

\begin{align*}
&\mathbb{E}_{k}\|\tilde{\nabla}^{SAGA}_{n} f(\underline{A}^{k})-\nabla f(\underline{A}^{k})\|_{*}\\
\le&\sqrt{\mathbb{E}_{k}\|\tilde{\nabla}^{SAGA}_{n} f(\underline{A}^{k})-\nabla f(\underline{A}^{k})\|_{*}^{2}}\\
\le&\frac{1}{\sqrt{BJ_{n}}}\sqrt{\sum_{i=1}^{J_n}\|\nabla_{A_{n}}f_{i}(\underline A^{k})-\nabla_{A_{n}}f_{i}((\phi^{k})^{i})\|_{*}^{2}}\\
\le&\frac{1}{\sqrt{BJ_{n}}}\sum_{i=1}^{J_n}\|\nabla_{A_{n}}f_{i}(\underline A^{k})-\nabla_{A_{n}}f_{i}((\phi^{k})^{i})\|_{*}.
\end{align*}
We bound the MSE of the stochastic gradient estimator $\tilde{\nabla}^{SAGA}f(\cdot)$ as follows,
\begin{align*}
&\frac{1}{BJ_{n}}\sum_{i=1}^{J_{n}}\mathbb{E}_{k}\|\nabla_{A_{n}}f_{i}(\underline A^{k})-\nabla_{A_{n}}f_{i}((\phi^{k})^{i})\|_{*}^{2}\\
\le&\frac{1+\delta}{BJ_{n}}\mathbb{E}_{k}\sum_{i=1}^{J_{n}}\|\nabla_{A_{n}}f_{i}(\underline A^{k-1})-\nabla_{A_{n}}f_{i}((\phi^{k})^{i})\|_{*}^{2}+\frac{1+\delta^{-1}}{BJ_{n}}\mathbb{E}_{k}\sum_{i=1}^{J_{n}}\|\nabla_{A_{n}}f_{i}(\underline A^{k})-\nabla_{A_{n}}f_{i}(\underline A^{k-1})\|_{*}^{2}\\
\le&\frac{1+\delta}{BJ_{n}}(1-\frac{B}{J_{n}})\sum_{i=1}^{J_{n}}\|\nabla_{A_{n}}f_{i}(\underline A^{k-1})-\nabla_{A_{n}}f_{i}((\phi^{k-1})^{i})\|_{*}^{2}+\frac{1+\delta^{-1}}{B}M_{1}^{2} \mathbb{E}_{k}\|\underline A_n^{k}-\underline A_n^{k-1}\|^{2}\\
\le&\frac{1+\delta}{BJ_{n}}(1-\frac{B}{J_{n}})\sum_{i=1}^{J_{n}}\|\nabla_{A_{n}}f_{i}(\underline A^{k-1})-\nabla_{A_{n}}f_{i}((\phi^{k-1})^{i})\|_{*}^{2}+\frac{1+\delta^{-1}}{B}M_{1}^{2}\mathbb{E}_{k}[(1+\alpha_{k}^{2})\|A_n^{k}-A_n^{k-1}\|^{2}\\
&+\alpha_{k-1}^{2}\|A_n^{k-1}-A_n^{k-2}\|^{2}]\\
\le&\frac{1+\delta}{BJ_{n}}(1-\frac{B}{J_{n}})\sum_{i=1}^{J_n}\|\nabla_{A_{n}}f_{i}(\underline A^{k-1})-\nabla_{A_{n}}f_{i}((\phi^{k-1})^{i})\|_{*}^{2}+\frac{2+2\delta^{-1}}{B N}M_{1}^{2}[\|A^{k}-A^{k-1}\|^{2}+\|A^{k-1}-A^{k-2}\|^{2}],
\end{align*}
where the first inequality follows from $\|x-z\|_{*}^{2}\le (1+\delta)\|x-y\|_{*}^{2}+(1+\delta^{-1})\|y-z\|_{*}^{2}$.

Let $\Gamma_{k+1}:=\frac{1}{BJ_{n}}\sum_{i=1}^{J_{n}}\|\nabla_{A_{n}}f_{i}(\underline A^{k})-\nabla_{A_{n}}f_{i}((\phi^{k})^{i})\|_{*}^{2}$ and $\delta=\frac{B}{2J_{n}}$, it shows that 
\[
\begin{aligned}
\mathbb{E}_{k}\Gamma_{k+1}\le& (1+\frac{B}{2J_{n}})(1-\frac{B}{J_{n}})\Gamma_{k}+(2J_{n}+\frac{4J_{n}^2}{B})\frac{M_{1}^{2}}{N}[\|A^{k}-A^{k-1}\|^{2}+\|A^{k-1}-A^{k-2}\|^{2}]\\
\le&(1-\frac{B}{2J_{n}})\Gamma_{k}+(2J_{n}+\frac{4J_{n}^2}{B})\frac{M_{1}^{2}}{N}[\|A^{k}-A^{k-1}\|^{2}+\|A^{k-1}-A^{k-2}\|^{2}]. 
\end{aligned}
\] 
This proves the geometric decay of $\Gamma_{k}$ in expectation. Similar to Appendix B in \cite{DriggsTLDS2020}, we also have that the third condition holds in Definition \ref{vr_definition}. 
For the SARAH stochastic gradient estimator,  we can get the results directly similar to Lemma 5 in \cite{WangH23}. The proof of  Proposition \ref{vr_gra}\,(2) is completed.  
This completes the proof.
\end{proof}

\begin{corollary}\label{lemma_Phi_kk1_Rd}
If $\psi:=\frac{1}{2}\|\cdot\|^{2}$, the inequality from Lemma \ref{lyapunov_descent} becomes
\[
\mathbb{E}_{k}[\Psi_{k+1}]\le\Psi_{k} -\frac{\epsilon}{6}\left(\mathbb{E}_{k}[\|A^{k+1}-A^{k}\|^{2}]+\|A^{k}-A^{k-1}\|^{2}+\|A^{k-1}-A^{k-2}\|^{2}\right).
\]
\end{corollary}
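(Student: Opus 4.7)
The plan is to observe that Corollary~\ref{lemma_Phi_kk1_Rd} is simply the Euclidean specialization of Lemma~\ref{lyapunov_descent}, so the proof reduces to substituting the specific kernel $\psi(\cdot)=\tfrac12\|\cdot\|^2$ into the general inequality \eqref{decent_inequality_01} and simplifying the resulting Bregman terms. First I would verify, by direct calculation from the definition of the Bregman divergence, that for $\psi(x)=\tfrac12\|x\|^2$ we have $\nabla\psi(x)=x$ and hence
\begin{equation*}
D_\psi(x,y)=\tfrac12\|x\|^2-\tfrac12\|y\|^2-\langle y,\,x-y\rangle=\tfrac12\|x-y\|^2.
\end{equation*}
In particular $\psi$ is $1$-strongly convex with $\sigma=1$, matching Assumption~\ref{assume_01}~(ii), and $\nabla\psi=\mathrm{id}$ is Lipschitz with $M_2=1$, so the prerequisites of Lemma~\ref{lyapunov_descent} are satisfied automatically.

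Next I would pass from the per-block divergence to the joint one. Using the coordinate-wise form of $D_\psi$ recalled in the remark after the Bregman definition, and summing over the $N$ blocks,
\begin{equation*}
D^N_\psi(A,B)=\sum_{n=1}^N D_\psi(A_n,B_n)=\tfrac12\sum_{n=1}^N\|A_n-B_n\|^2=\tfrac12\|A-B\|^2,
\end{equation*}
where $\|A-B\|^2$ denotes the squared Euclidean norm on the product space $\Pi_{n=1}^N\mathbb{R}^{I_n\times R}$. Substituting this identity into the three Bregman terms appearing on the right-hand side of \eqref{decent_inequality_01} turns the multiplier $\tfrac{\epsilon}{3}$ into $\tfrac{\epsilon}{3}\cdot\tfrac12=\tfrac{\epsilon}{6}$ in front of each squared-norm term, yielding exactly the claimed inequality. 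The Lyapunov sequence $\Psi_k$ in the corollary is implicitly the same one as in \eqref{lyapunov_function} with the corresponding substitution of $D^N_\psi$ by $\tfrac12\|\cdot-\cdot\|^2$; this does not affect the descent step, since the proof of Lemma~\ref{lyapunov_descent} never exploited properties of $D^N_\psi$ beyond those now valid with $\psi=\tfrac12\|\cdot\|^2$.

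Since no additional estimates are required and every hypothesis of the parent lemma is inherited, there is no substantive obstacle: the only care needed is bookkeeping to ensure that the stepsize condition \eqref{stepsize_set} is consistently applied with $M_2=1$ (so that $\gamma_k=|\alpha_k-\beta_k|$) and that the variance-reduction assumption of Definition~\ref{vr_definition} is still in force on the stochastic gradient used inside Algorithm~\ref{iTableSMD}. With these identifications the corollary follows as a one-line specialization of \eqref{decent_inequality_01}.
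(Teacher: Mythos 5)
Your proposal is correct and is exactly the argument the paper intends: the corollary is stated without proof as the immediate specialization of Lemma \ref{lyapunov_descent}, using $D_\psi(x,y)=\tfrac12\|x-y\|^2$ for $\psi=\tfrac12\|\cdot\|^2$ so that the factor $\tfrac{\epsilon}{3}$ on the Bregman terms becomes $\tfrac{\epsilon}{6}$ on the squared norms. Your additional bookkeeping ($M_2=1$, $\gamma_k=|\alpha_k-\beta_k|$, the Lyapunov function inheriting the same substitution) is consistent with the paper's setup.
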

Now we can prove the following result, which means that the subgradient of $\Phi(A^{k})$ is bounded.

\begin{lemma} \label{subgradient_bound}
Suppose that Assumptions \ref{assume_01}-\ref{assume_03} hold and the stepsize $\eta_{k}$ satisfies $0<\eta\le \eta_{k}$ and \eqref{stepsize_set}.The sequence $\{A_{1}^k,\dots,A_{N}^k\}$ generated by iTableSMD is bounded for all $k$. Define

\[
P_{n}^{k+1}:=\nabla_{A_n}f(A^{k+1})-\tilde{\nabla}_{A_n}f(\underline{A}^k)+\frac{1}{\eta_k}(\nabla \psi(\phi_n^k)-\nabla \psi(A_n^{k+1})), 
\]
where $P_{n}^{k+1} \in \partial_n \Phi\left(A^{k+1}\right)$ and $P^{k+1}=\left(P_{1}^{k+1}, P_{2}^{k+1}, \dots, P_{N}^{k+1}\right)$, implying that $P^{k+1} \in \partial \Phi\left(A^{k+1}\right)$.
Then, we can obtain
\[
\mathbb{E}_{k}\|P^{k+1}\| \le w(\mathbb{E}_{k}\|A^{k+1}-A^k\|+\|A^{k}-A^{k-1}\|+\|A^{k-1}-A^{k-2}\|) +\Upsilon_{k},
\]
where $w=\max\left\{M_{1}+\frac{M_{2}}{\eta }, V_{2}+\beta^{k}M_{1}+\frac{\alpha^{k}M_{2}}{\eta }, V_{2}\right\}$.
\end{lemma}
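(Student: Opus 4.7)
The plan is to begin from the first-order optimality condition of the Bregman proximal update \eqref{xk_update}. For the block $n=\xi^k$ selected at iteration $k$, there exists $\xi_{k+1}\in\partial h_n(A_n^{k+1})$ with
$$
\xi_{k+1}+\tilde{\nabla}_{A_n} f(A_1^k,\dots,\underline A_n^k,\dots,A_N^k)+\frac{1}{\eta_k}\bigl(\nabla\psi(A_n^{k+1})-\nabla\psi(\tilde A_n^k)\bigr)=0.
$$
Adding $\nabla_{A_n} f(A^{k+1})$ to $\xi_{k+1}$ yields an element of $\partial_n\Phi(A^{k+1})=\nabla_{A_n}f(A^{k+1})+\partial h_n(A_n^{k+1})$, which is exactly $P_n^{k+1}$ (reading $\phi_n^k$ as $\tilde A_n^k$). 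For blocks $n'\neq n$, since $A_{n'}^{k+1}=A_{n'}^k$ no new stochastic data is injected, so the corresponding $P_{n'}^{k+1}$ is controlled by the same triangle-inequality pattern used for block $n$ and does not worsen the bound.

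I would then decompose $P_n^{k+1}$ into three pieces and bound each one separately: (i) the deterministic-gradient shift $\nabla_{A_n}f(A^{k+1})-\nabla_{A_n}f(\underline A^k)$; (ii) the stochastic-gradient error $\nabla_{A_n}f(\underline A^k)-\tilde{\nabla}_{A_n}f(\underline A^k)$; and (iii) the Bregman-mirror shift $\eta_k^{-1}(\nabla\psi(\tilde A_n^k)-\nabla\psi(A_n^{k+1}))$. For (i), Assumption~\ref{assume_03} gives $\|(\text{i})\|\le M_1\|A_n^{k+1}-\underline A_n^k\|$, and since $\underline A_n^k=A_n^k+\beta^k(A_n^k-A_n^{k-1})$ the triangle inequality yields $M_1\|A_n^{k+1}-A_n^k\|+\beta^kM_1\|A_n^k-A_n^{k-1}\|$. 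For (iii), Assumption~\ref{assume_01}(iii) together with $\tilde A_n^k=A_n^k+\alpha^k(A_n^k-A_n^{k-1})$ gives $\|(\text{iii})\|\le\frac{M_2}{\eta_k}\|A_n^{k+1}-A_n^k\|+\frac{\alpha^kM_2}{\eta_k}\|A_n^k-A_n^{k-1}\|$. For (ii), the $\ell_*$-expectation bound \eqref{MSE_l2} from Definition~\ref{vr_definition} directly gives $\mathbb E_k\|(\text{ii})\|_*\le \Upsilon_k+V_2(\|A^k-A^{k-1}\|+\|A^{k-1}-A^{k-2}\|)$.

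Finally, I would take conditional expectation $\mathbb E_k$, apply the triangle inequality across the three pieces, and use $\eta_k\ge\eta$ to replace $\eta_k^{-1}$ by $\eta^{-1}$. Grouping the coefficients of $\mathbb E_k\|A^{k+1}-A^k\|$, $\|A^k-A^{k-1}\|$, and $\|A^{k-1}-A^{k-2}\|$ produces, respectively, $M_1+M_2/\eta$, $V_2+\beta^kM_1+\alpha^kM_2/\eta$, and $V_2$; choosing $w$ as the maximum of these three gives the stated inequality. The main obstacle is bookkeeping rather than any deep inequality: one must (a) be careful that the dual norm $\|\cdot\|_*$ arising in \eqref{MSE_l2} is compatible with the primal norms used in the Lipschitz bounds (which is automatic under Assumption~\ref{assume_01}(ii) with $\sigma=1$), (b) ensure the inertial parameters $\alpha^k,\beta^k\in[0,1]$ and the stepsize condition \eqref{stepsize_set} are consistent with the global maximum in $w$, and (c) handle the non-updated blocks so that the global estimate $\mathbb E_k\|P^{k+1}\|$ rather than just the selected block's component retains the same form.
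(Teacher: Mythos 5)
Your proposal is correct and follows essentially the same route as the paper's proof: start from the optimality condition of \eqref{xk_update} to identify $P_n^{k+1}\in\partial_n\Phi(A^{k+1})$ (with $\phi_n^k$ indeed read as $\tilde A_n^k$), split into the deterministic-gradient shift, the stochastic-gradient error, and the mirror shift, then bound these via Assumption \ref{assume_03}, the bound \eqref{MSE_l2} from Definition \ref{vr_definition}, and Assumption \ref{assume_01}(iii) together with the inertial definitions of $\underline A_n^k$ and $\tilde A_n^k$, finishing with $\eta_k\ge\eta$ and the maximum of the three coefficients. The paper likewise reduces $\mathbb{E}_k\|P^{k+1}\|$ to the selected block $\xi^k$, so your treatment of the non-updated blocks matches its (equally brief) handling of that point.
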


\begin{proof}
From the implicit deﬁnition of the proximal operator \eqref{xk_update} in the iTableSMD algorithm, we have  
\[
0\in \partial h_n(A_{n}^{k+1}) + \tilde{\nabla}_{A_n}f(\underline{A}^k)+\frac{1}{\eta_k}(\nabla \psi(A_n^{k+1})-\nabla \psi(\tilde{A}_n^k)), 
\]
where $\underline A^{k}:=(A_{1}^{k},\dots, A_{n-1}^{k},\underline{A}_{n}^{k},A_{n+1}^{k},\dots, A_{N}^{k})$. Combining it with  $\partial_n \Phi\left(A^{k+1}\right)\equiv\nabla_{A_n}f\left(A^{k+1}\right)+\partial h_n(A_{n}^{k+1})$, we have $P_{n}^{k+1}\in \partial_n \Phi(A^{k+1})$. Furthermore, in Problem \eqref{regular_GCPD}, with $h(A^{k+1})=\sum_{n=1}^{N}h_{n}(A_{n})$, we have $P^{k+1}=(P_{1}^{k+1}, P_{2}^{k+1}, \dots, P_{N}^{k+1})$, and it follows that $P^{k+1} \in \partial \Phi(A^{k+1})$, where $\partial \Phi(A^{k+1})\equiv\nabla f\left(A^{k+1}\right)+\partial h(A^{k+1})$.

All that remains is to bound the norm of $P^{k+1}$.  
Suppose $n=\xi^{k}$ at the $k$-th iteration. It shows that
\begin{align*}
&\mathbb{E}_{k}\|P^{k+1}\|\\
=&\mathbb{E}_{k}\|P_{\xi^{k}}^{k+1}\|\\
\le&\mathbb{E}_{k}\|\nabla_{A_{\xi^{k}}}f(A^{k+1})-\tilde{\nabla}_{A_{\xi^{k}}}f(\underline{A}^k)+\frac{1}{\eta_k}(\nabla \psi(\tilde{A}_{\xi^{k}}^k)-\nabla \psi(A_{\xi^{k}}^{k+1}))\|\\
\le&\mathbb{E}_{k}\|\nabla_{A_{\xi^{k}}}f(A^{k+1})-\tilde{\nabla}_{A_{\xi^{k}}}f(\underline{A}^k)\|+\frac{1}{\eta_{k}}\mathbb{E}_{k}\|\nabla \psi(\tilde{A}_{\xi^{k}}^k)-\nabla \psi(A_{\xi^{k}}^{k+1})\|\\
\le&\mathbb{E}_{k}\|\nabla_{A_{\xi^{k}}}f(A^{k+1})-\nabla_{A_{\xi^{k}}}f(\underline{A}^k)\|+\mathbb{E}_{k}\| \nabla_{A_{\xi^{k}}}f(\underline{A}^k)-\tilde{\nabla}_{A_{\xi^{k}}}f(\underline{A}^k)\|+\frac{1}{\eta_{k}}\mathbb{E}_{k}\|\nabla \psi(\tilde{A}_{\xi^{k}}^k)-\nabla \psi(A_{\xi^{k}}^{k+1})\|\\
\le&M_{1}\mathbb{E}_{k}\|A_{\xi^{k}}^{k+1}-\underline{A}_{\xi^{k}}^k\|+\Upsilon_{k}+V_{2}\|A^{k}-A^{k-1}\|+V_{2}\|A^{k-1}-A^{k-2}\|+\frac{M_{2}}{\eta_{k}}\mathbb{E}_{k}\|A_{\xi^{k}}^{k+1}-\tilde{A}_{\xi^{k}}^k\|\\
\le&M_{1}\mathbb{E}_{k}\|A^{k+1}-\underline{A}^k\|+\Upsilon_{k}+V_{2}\|A^{k}-A^{k-1}\|+V_{2}\|A^{k-1}-A^{k-2}\|+\frac{M_{2}}{\eta_{k}}\mathbb{E}_{k}\|A^{k+1}-\tilde{A}^k\|\\
\le& \left(M_{1}+\frac{M_{2}}{\eta_{k}}\right)\mathbb{E}_{k}\|A^{k+1}-A^k\|+\left(V_{2}+\beta^{k}M_{1}+\frac{\alpha^{k}M_{2}}{\eta^{k}}\right)\|A^{k}-A^{k-1}\| +V_{2}\|A^{k-1}-A^{k-2}\|+\Upsilon_{k}\\
\le& \left(M_{1}+\frac{M_{2}}{\eta }\right)\mathbb{E}_{k}\|A^{k+1}-A^k\|+\left(V_{2}+\beta^{k}M_{1}+\frac{\alpha^{k}M_{2}}{\eta }\right)\|A^{k}-A^{k-1}\| +V_{2}\|A^{k-1}-A^{k-2}\|+\Upsilon_{k}\\
\le& w(\mathbb{E}_{k}\|A^{k+1}-A^k\|+\|A^{k}-A^{k-1}\|+\|A^{k-1}-A^{k-2}\|) +\Upsilon_{k},
\end{align*}
where $w=\max\left\{M_{1}+\frac{M_{2}}{\eta }, V_{2}+\beta^{k}M_{1}+\frac{\alpha^{k}M_{2}}{\eta }, V_{2}\right\}$. This completes the proof.    
\end{proof}

\begin{lemma}\label{lemma_dist2}
Under the same conditions in Lemma \ref{subgradient_bound}, there exists a constant $\bar{w}>0$ such that
\[
\mathbb{E}[\mathrm{dist}(0,\partial \Phi(A^{k+1}))^{2}]\le \bar{w}\left(\mathbb{E}_{k}[\|A^{k+1}-A^{k}\|^{2}]+\|A^{k}-A^{k-1}\|^{2}+\|A^{k-1}-A^{k-2}\|^{2}\right) +3\mathbb{E}\Gamma_{k}.
\] 
\end{lemma}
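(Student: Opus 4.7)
The plan is to bound $\mathrm{dist}(0,\partial\Phi(A^{k+1}))^{2}$ by $\|P^{k+1}\|^{2}$, where $P^{k+1}\in\partial\Phi(A^{k+1})$ was constructed in Lemma~\ref{subgradient_bound}, and then control $\mathbb{E}_{k}\|P^{k+1}\|^{2}$ by squaring the three-term decomposition used in that proof and invoking the $L^{2}$ MSE bound~\eqref{MSE_l22} instead of the $L^{1}$ bound~\eqref{MSE_l2}.

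First, assume without loss of generality that block $n=\xi^{k}$ is sampled, so that $P^{k+1}=P^{k+1}_{\xi^{k}}$ and $\mathrm{dist}(0,\partial\Phi(A^{k+1}))^{2}\le\|P^{k+1}\|^{2}$. I would rewrite
\[
P^{k+1}=\underbrace{\bigl(\nabla_{A_{n}}f(A^{k+1})-\nabla_{A_{n}}f(\underline{A}^{k})\bigr)}_{T_{1}}+\underbrace{\bigl(\nabla_{A_{n}}f(\underline{A}^{k})-\tilde{\nabla}_{A_{n}}f(\underline{A}^{k})\bigr)}_{T_{2}}+\underbrace{\tfrac{1}{\eta_{k}}\bigl(\nabla\psi(\tilde{A}^{k}_{n})-\nabla\psi(A^{k+1}_{n})\bigr)}_{T_{3}},
\]
and apply the elementary inequality $\|T_{1}+T_{2}+T_{3}\|^{2}\le 3(\|T_{1}\|^{2}+\|T_{2}\|^{2}+\|T_{3}\|^{2})$.

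Second, I would bound the deterministic pieces $T_{1}$ and $T_{3}$ exactly as in Lemma~\ref{subgradient_bound}: by Assumption~\ref{assume_03} and the definition $\underline{A}^{k}_{n}=A^{k}_{n}+\beta^{k}(A^{k}_{n}-A^{k-1}_{n})$ we get $\|T_{1}\|\le M_{1}\bigl(\|A^{k+1}-A^{k}\|+\beta^{k}\|A^{k}-A^{k-1}\|\bigr)$, and by Assumption~\ref{assume_01}(iii) together with $\tilde{A}^{k}_{n}=A^{k}_{n}+\alpha^{k}(A^{k}_{n}-A^{k-1}_{n})$ we get $\|T_{3}\|\le (M_{2}/\eta_{k})\bigl(\|A^{k+1}-A^{k}\|+\alpha^{k}\|A^{k}-A^{k-1}\|\bigr)$. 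Using $(a+b)^{2}\le 2a^{2}+2b^{2}$ and $\eta_{k}\ge\eta$, the squares of $\|T_{1}\|$ and $\|T_{3}\|$ become linear combinations of $\|A^{k+1}-A^{k}\|^{2}$ and $\|A^{k}-A^{k-1}\|^{2}$ with coefficients depending only on $M_{1}, M_{2}, \eta$ and the bounded parameters $\alpha^{k},\beta^{k}\in[0,1]$.

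Third, take conditional expectation $\mathbb{E}_{k}$ of $\|T_{2}\|^{2}$ and apply the MSE bound~\eqref{MSE_l22} from Definition~\ref{vr_definition} to obtain $\mathbb{E}_{k}\|T_{2}\|^{2}\le\Gamma_{k}+V_{1}(\|A^{k}-A^{k-1}\|^{2}+\|A^{k-1}-A^{k-2}\|^{2})$. Assembling the three pieces yields
\[
\mathbb{E}_{k}\|P^{k+1}\|^{2}\le C_{1}\mathbb{E}_{k}\|A^{k+1}-A^{k}\|^{2}+C_{2}\|A^{k}-A^{k-1}\|^{2}+C_{3}\|A^{k-1}-A^{k-2}\|^{2}+3\Gamma_{k},
\]
for explicit constants $C_{1},C_{2},C_{3}$ depending on $M_{1},M_{2},V_{1},\eta$ and the uniform bounds on $\alpha^{k},\beta^{k}$. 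Taking $\bar{w}:=\max\{C_{1},C_{2},C_{3}\}$ and then full expectation gives the stated inequality. The only delicate point is keeping the coefficient of $\Gamma_{k}$ at exactly $3$, which is forced by the three-term split; using a finer decomposition would inflate it, while using $(a+b)^{2}\le 2a^{2}+2b^{2}$ on $P^{k+1}$ directly would not cleanly separate $\Gamma_{k}$ from the deterministic part. Beyond that, the argument is a routine triangle-plus-Lipschitz computation, so no substantive obstacle is expected.
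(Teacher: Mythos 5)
Your proposal is correct and follows essentially the same route as the paper's proof: the same three-term split of $P^{k+1}$, the inequality $\|T_1+T_2+T_3\|^2\le 3(\|T_1\|^2+\|T_2\|^2+\|T_3\|^2)$, the Lipschitz bounds from Assumptions \ref{assume_01}(iii) and \ref{assume_03} combined with $(a+b)^2\le 2a^2+2b^2$ and $\eta_k\ge\eta$, the MSE bound \eqref{MSE_l22} for the stochastic term (which is exactly what keeps the coefficient of $\Gamma_k$ at $3$), and finally $\bar{w}$ as the maximum of the resulting coefficients. The only difference is cosmetic: your bookkeeping of the $1/\eta_k$ factor in the $\nabla\psi$ term (yielding $M_2^2/\eta^2$) is in fact slightly more careful than the paper's, but since the lemma only asserts existence of some $\bar{w}>0$, this does not affect the result.
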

\begin{proof}
From Lemma \ref{subgradient_bound}, it shows that
\begin{align*}
&\mathbb{E}_{k}\|P^{k+1}\|^{2}\\
\le&3\mathbb{E}_{k}\|\nabla_{A_{\xi^{k}}}f(A^{k+1})-\nabla_{A_{\xi^{k}}}f(\underline{A}^k)\|^{2}+3\mathbb{E}_{k}\|\nabla_{A_{\xi^{k}}}f(\underline{A}^k)-\tilde{\nabla}_{A_{\xi^{k}}}f(\underline{A}^k)\|^{2}+\frac{3}{\eta_{k}}\mathbb{E}_{k}\|\nabla \psi(\tilde{A}_{\xi^{k}}^k)-\nabla \psi(A_{\xi^{k}}^{k+1})\|^{2}\\
\le&3M_{1}^{2}\mathbb{E}_{k}\|A^{k+1}-\underline{A}^k\|^2+3\Gamma_{k}+3V_{1}\|A^{k}-A^{k-1}\|^{2}+3V_{1}\|A^{k-1}-A^{k-2}\|^{2}+\frac{3M_{2}^{2}}{\eta_{k}}\mathbb{E}_{k}\|A^{k+1}-\tilde{A}^k\|^{2}\\
\le& \left(6M_{1}^{2}+\frac{6M_{2}^{2}}{\eta_{k}}\right)\mathbb{E}_{k}\|A^{k+1}-{A}^k\|^{2}+\left(3V_{1}+6\beta_{k}^{2}M_{1}^{2}+\frac{6\alpha_{k}^{2}M_{2}^{2}}{\eta_{k}}\right)\|A^{k}-A^{k-1}\|^{2}\\
& +3V_{1}\|A^{k-1}-A^{k-2}\|^{2}+3\Gamma_{k}\\
\le&\left(6M_{1}^{2}+\frac{6M_{2}^{2}}{\eta }\right)\mathbb{E}_{k}\|A^{k+1}-{A}^k\|^{2}+\left(3V_{1}+6\beta_{k}^{2}M_{1}^{2}+\frac{6\alpha_{k}^{2}M_{2}^{2}}{\eta }\right)\|A^{k}-A^{k-1}\|^{2}\\
& +3V_{1}\|A^{k-1}-A^{k-2}\|^{2}+3\Gamma_{k}\\
\le& \bar{w}\left(\mathbb{E}_{k}[\|A^{k+1}-A^{k}\|^{2}]+\|A^{k}-A^{k-1}\|^{2}+\|A^{k-1}-A^{k-2}\|^{2}\right) +3\mathbb{E}\Gamma_{k},
\end{align*}
where $\bar{w}:=\max\left\{6M_{1}^{2}+\frac{6M_{2}^{2}}{\eta}, 3V_{1}+6\beta_{k}^{2}M_{1}^{2}+\frac{6\alpha_{k}^{2}M_{2}^{2}}{\eta }, 3V_{1}\right\}$. Through $\mathrm{dist}\left(0,\partial \Phi\left(A^{k+1}\right)\right)^{2}\le\|P^{k+1}\|^{2}$ and taking full expectation on both sides, it shows that
\[
\mathbb{E}[\mathrm{dist}(0,\partial \Phi(A^{k+1}))^{2}]\le \bar{w}\left(\mathbb{E}_{k}[\|A^{k+1}-A^{k}\|^{2}]+\|A^{k}-A^{k-1}\|^{2}+\|A^{k-1}-A^{k-2}\|^{2}\right) +3\mathbb{E}\Gamma_{k}.
\]
This completes the proof.
\end{proof}

Using Lemma \ref{lemma_dist2}, we can show the convergence rate of the expected squared distance of the subgradient to $0$.
\begin{theorem}\label{subgradient_rate}
Assume that Assumptions \ref{assume_01}-\ref{assume_03} hold, and the stepsize satisfies $0<\eta\le\eta_{k}$ and \eqref{stepsize_set}.  Let $\{A^{k}\}_{k\in\mathbb{N}}$ generated by iTableSMD be bounded for all $k$. Then there exists $0<\sigma<\epsilon/6$ such that
\[
\mathbb{E}[\mathrm{dist}(0,\partial \Phi(A^{\hat{k}}))^{2}]\le \frac{\bar{w}}{(\epsilon/6-\sigma)K}(\mathbb{E}\Psi_{1}+\frac{\epsilon/2-3\sigma}{\tau\bar{w}}\mathbb{E}\Gamma_{1})=\mathcal{O}(1/K),
\]
where $\hat{k}$ is drawn from $\{2, \dots, K+1\}$. In other words, it takes at most $\mathcal{O}(\epsilon^{-2})$ iterations  in expectation  to obtain an $\epsilon$-stationary point (see Definition \ref{stationary-point}) of $\Phi$. 
\end{theorem}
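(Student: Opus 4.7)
The plan is to construct an augmented Lyapunov function that simultaneously absorbs the descent of $\Psi_k$ supplied by Lemma~\ref{lyapunov_descent} and the geometric decay of the variance proxy $\Gamma_k$ from Definition~\ref{vr_definition}(ii), and then to match this combined descent against the subgradient estimate of Lemma~\ref{lemma_dist2} via an ``LP--duality''--style proportionality that pins down the specific $\sigma$ claimed in the theorem.

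Concretely, I introduce $\hat\Psi_k := \Psi_k + c\,\Gamma_k$ for a parameter $c>0$ to be chosen. Adding $c$ times the geometric-decay recursion $\mathbb E_k[\Gamma_{k+1}]\le(1-\tau)\Gamma_k+V_\Gamma(\|A^k-A^{k-1}\|^2+\|A^{k-1}-A^{k-2}\|^2)$ to Corollary~\ref{lemma_Phi_kk1_Rd} (in the general $\psi$ case, starting from Lemma~\ref{lyapunov_descent} and using the strong-convexity estimate $D_\psi\ge \tfrac12\|\cdot\|^2$) yields, after bounding the three quadratic coefficients uniformly by the smallest of them,
\[
\mathbb{E}_k[\hat\Psi_{k+1}]\le \hat\Psi_k - c\tau\Gamma_k - \bigl(\tfrac{\epsilon}{6}-cV_\Gamma\bigr)\bigl(\mathbb{E}_k\|A^{k+1}-A^k\|^2+\|A^k-A^{k-1}\|^2+\|A^{k-1}-A^{k-2}\|^2\bigr).
\]
Setting $\sigma:=cV_\Gamma\in(0,\epsilon/6)$, telescoping from $k=1$ to $K$, taking full expectation, and using $\hat\Psi_{K+1}\ge 0$ (which follows from $\Psi_k\ge 0$ under the stepsize restriction~\eqref{stepsize_set} and $\Gamma_k\ge 0$) gives the master inequality
\[
(\tfrac{\epsilon}{6}-\sigma)\,S_K+\tfrac{\sigma\tau}{V_\Gamma}\,T_K \le \mathbb{E}\Psi_1+\tfrac{\sigma}{V_\Gamma}\mathbb{E}\Gamma_1,
\]
where $S_K$ denotes the summed squared increments appearing on the right of Lemma~\ref{lemma_dist2} and $T_K:=\sum_{k=1}^K\mathbb{E}\Gamma_k$.

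Summing Lemma~\ref{lemma_dist2} over $k$ produces $\sum_{k=1}^{K}\mathbb{E}[\mathrm{dist}(0,\partial\Phi(A^{k+1}))^{2}]\le \bar w\,S_K+3\,T_K$. The decisive step is to pick $\sigma$ so that the cost vector $(\bar w,3)$ is a positive scalar multiple of the constraint vector $(\tfrac{\epsilon}{6}-\sigma,\tfrac{\sigma\tau}{V_\Gamma})$, equivalently $\bar w\sigma\tau = 3V_\Gamma(\tfrac{\epsilon}{6}-\sigma)$; this pins down the explicit value $\sigma = \tfrac{V_\Gamma\epsilon/2}{\tau\bar w+3V_\Gamma}\in(0,\epsilon/6)$ together with the identity $\sigma/V_\Gamma = (\epsilon/2-3\sigma)/(\tau\bar w)$. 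With this identity,
\[
\bar w\,S_K+3\,T_K = \tfrac{\bar w}{\epsilon/6-\sigma}\bigl[(\tfrac{\epsilon}{6}-\sigma)S_K+\tfrac{\sigma\tau}{V_\Gamma}T_K\bigr] \le \tfrac{\bar w}{\epsilon/6-\sigma}\bigl(\mathbb{E}\Psi_1+\tfrac{\epsilon/2-3\sigma}{\tau\bar w}\mathbb{E}\Gamma_1\bigr).
\]
Dividing by $K$ and recognizing that the left-hand side equals $\mathbb{E}[\mathrm{dist}(0,\partial\Phi(A^{\hat k}))^2]$ when $\hat k$ is drawn uniformly from $\{2,\dots,K+1\}$ delivers the $\mathcal{O}(1/K)$ rate; inverting $C/K\le\varepsilon^2$ then produces the $\mathcal{O}(\varepsilon^{-2})$ iteration complexity for an $\varepsilon$-stationary point in the sense of Definition~\ref{stationary-point}.

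The main obstacle is not any individual estimate --- each ingredient (descent of $\Psi_k$, geometric decay of $\Gamma_k$, subgradient bound) is already in hand --- but engineering the augmentation coefficient $c=\sigma/V_\Gamma$ and the matching value of $\sigma$ so that a single nonnegative combination of $S_K$ and $T_K$ dominates $\bar w S_K+3T_K$ up to the scalar factor $\bar w/(\tfrac{\epsilon}{6}-\sigma)$. Without this proportionality one is forced to bound $\bar w S_K$ and $3T_K$ by separate applications of the master inequality and a separate summation of $T_K\le \tau^{-1}\mathbb{E}\Gamma_1+\tau^{-1}V_\Gamma S_K$, which generates additional cross-terms mixing $\mathbb{E}\Psi_1$ and $\mathbb{E}\Gamma_1$ and fails to reproduce the clean additive form appearing in the theorem.
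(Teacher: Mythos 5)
Your proof is correct and follows essentially the same route as the paper: it uses the same ingredients (the descent of $\Psi_k$ from Corollary \ref{lemma_Phi_kk1_Rd}, the geometric decay \eqref{Gamma_k1_k}, and Lemma \ref{lemma_dist2}) and arrives at the identical choice $\sigma=\frac{\epsilon V_{\Gamma}}{2(3V_{\Gamma}+\tau\bar{w})}$, your augmentation coefficient $c=\sigma/V_{\Gamma}$ coinciding with the paper's telescoping weight $(\epsilon/2-3\sigma)/(\tau\bar{w})$. The only difference is bookkeeping: you telescope the augmented Lyapunov $\Psi_k+c\,\Gamma_k$ first and then match coefficients against the summed subgradient bound, whereas the paper splits $\epsilon/6=\sigma+(\epsilon/6-\sigma)$ inside the per-iteration inequality and cancels the cross terms directly --- algebraically the same argument with the same final constants.
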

\begin{proof}
From Corollary \ref{lemma_Phi_kk1_Rd} and Lemma \ref{lemma_dist2}, it shows that
	\begin{align*}
		&\mathbb{E}[\Psi_{k}-\Psi_{k+1}]\\
		\ge&\frac{\epsilon}{6}\mathbb{E}[\|A^{k+1}-A^{k}\|^{2}+\|A^{k}-A^{k-1}\|^{2}+\|A^{k-1}-A^{k-2}\|^{2}]\\
		\ge&\sigma\mathbb{E}[\|A^{k+1}-A^{k}\|^{2}+\|A^{k}-A^{k-1}\|^{2}+\|A^{k-1}-A^{k-2}\|^{2}]
		+\frac{\epsilon/6-\sigma}{\bar{w}}\mathbb{E}[\mathrm{dist}(0,\partial \Phi(A^{k+1}))^{2}]\\ &-\frac{\epsilon/2-3\sigma}{\bar{w}}\mathbb{E}\Gamma_{k}\\
		\ge&\sigma\mathbb{E}[\|A^{k+1}-A^{k}\|^{2}+\|A^{k}-A^{k-1}\|^{2}+\|A^{k-1}-A^{k-2}\|^{2}]
		+\frac{\epsilon/6-\sigma}{\bar{w}}\mathbb{E}[\mathrm{dist}(0,\partial \Phi(A^{k+1}))^{2}]\\
		&+\frac{\epsilon/2-3\sigma}{\tau\bar{w}}\mathbb{E}[\Gamma_{k+1}-\Gamma_{k}] -\frac{(\epsilon/2-3\sigma)V_{\Gamma}}{\tau\bar{w}}\mathbb{E}[\|A^{k}-A^{k-1}\|^{2}+\|A^{k-1}-A^{k-2}\|^{2}]\\
		\ge&\sigma\mathbb{E}[\|A^{k+1}-A^{k}\|^{2}+\|A^{k}-A^{k-1}\|^{2}+\|A^{k-1}-A^{k-2}\|^{2}]
		+\frac{\epsilon/6-\sigma}{\bar{w}}\mathbb{E}[\mathrm{dist}(0,\partial \Phi(A^{k+1}))^{2}]\\
		&+\frac{\epsilon/2-3\sigma}{\tau\bar{w}}\mathbb{E}[\Gamma_{k+1}-\Gamma_{k}] -\frac{(\epsilon/2-3\sigma)V_{\Gamma}}{\tau\bar{w}}\mathbb{E}[\|A^{k+1}-A^{k}\|^{2}+\|A^{k}-A^{k-1}\|^{2}+\|A^{k-1}-A^{k-2}\|^{2}],
	\end{align*}
	where the third inequality follows from \eqref{Gamma_k1_k} in Definition \ref{vr_definition}. If we let $\sigma=\frac{(\epsilon/2-3\sigma)V_{\Gamma}}{\tau\bar{w}}$, i.e., $\sigma=\frac{\epsilon V_{\Gamma}}{2(3V_{\Gamma}+\tau\bar{w})}$, it shows that
	\[
	\mathbb{E}[\Psi_{k}-\Psi_{k+1}]\ge \frac{\epsilon/6-\sigma}{\bar{w}}\mathbb{E}[\mathrm{dist}(0,\partial \Phi(A^{k+1}))^{2}]+\frac{\epsilon/2-3\sigma}{\tau\bar{w}}\mathbb{E}[\Gamma_{k+1}-\Gamma_{k}].
	\]
	Summing up $k=1$ to $K$, we have
	\[
	\mathbb{E}[\Psi_{1}-\Psi_{K+1}]\ge \frac{\epsilon/6-\sigma}{\bar{w}}\sum_{k=1}^{K}\mathbb{E}[\mathrm{dist}(0,\partial \Phi(A^{k+1}))^{2}]+\frac{\epsilon/2-3\sigma}{\tau\bar{w}}\mathbb{E}[\Gamma_{K+1}-\Gamma_{1}],
	\]
	which means  there exists a  $\hat{k} \in\{2,\dots,K+1\}$ such that
	\begin{align*}
		\mathbb{E}[\mathrm{dist}(0,\partial \Phi(A^{\hat{k}}))^{2}]\le&\frac{1}{K}\sum_{k=1}^{K}\mathbb{E}[\mathrm{dist}(0,\partial \Phi(A^{k+1}))^{2}]\\
		\le&\frac{\bar{w}}{(\epsilon/6-\sigma)K}(\mathbb{E}[\Psi_{1}-\Psi_{K+1}]+\frac{\epsilon/2-3\sigma}{\tau\bar{w}}\mathbb{E}[\Gamma_{1}-\Gamma_{K+1}])\\
		\le&\frac{\bar{w}}{(\epsilon/6-\sigma)K}(\mathbb{E}\Psi_{1}+\frac{\epsilon/2-3\sigma}{\tau\bar{w}}\mathbb{E}\Gamma_{1}).
	\end{align*}
	This completes the proof.
\end{proof}
We define the  set of cluster points of $\{A^{k}\}_{k\in\mathbb{N}}$ as
\begin{eqnarray}\label{limit_set}
	\begin{aligned}
		\Omega(A^{0}):=&\{A^{*}:\exists \text{ an increasing sequence of integers }\{k_{l}\}_{l\in\mathbb{N}} \text{ such that }A^{k_{l}}\rightarrow A^{*} \text{ as } l\rightarrow+\infty \}.
	\end{aligned}
\end{eqnarray}

\begin{lemma}\label{statements_lemma}
Suppose that Assumptions \ref{assume_01} to \ref{assume_03} hold,   the step $\eta_{k}$ satisfies $0<\eta\le \eta_{k}$ and \eqref{stepsize_set}.  Then the following statements hold. 
\begin{itemize}
\item[(1)] $\sum_{k=0}^{\infty}\|A^{k+1}-A^{k}\|^{2}<+\infty$ a.s., and $\lim_{k\rightarrow+\infty}\|A^{k+1}-A^{k}\|\rightarrow0$ a.s.
\item[(2)] $\mathbb{E}[\Phi(A^k)]\rightarrow\Phi^{*}$, where $\Phi^{*}\in[\mathcal{V}(\Phi),+\infty)$  with $\mathcal{V}(\Phi):=\inf_{A} \Phi(A)$, and $\mathbb{E}\Phi(A^{*})=\Phi_{*}$ for all $A^{*}\in\Omega(A_{0})$.
\item[(3)] $\mathbb{E}[\mathrm{dist}(0,\partial \Phi(A^{k}))]\rightarrow0$. Moreover, the set 
 $\Omega(A^{0})$ is nonempty, and $\mathbb{E}[\mathrm{dist}(0,\partial \Phi(A^{*}))]=0$  for all $A^{*}\in\Omega(A_{0})$.
\item[(4)] $\mathrm{dist}(A^{k},\Omega(A_{0}))\rightarrow0$ a.s., and $\Omega(A_{0})$ is a.s. compact and connected.
\end{itemize}
\end{lemma}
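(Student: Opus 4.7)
The plan is to chain together the Lyapunov descent from Corollary \ref{lemma_Phi_kk1_Rd}, the subgradient bound from Lemma \ref{lemma_dist2}, and the variance-reduction consistency property of Definition \ref{vr_definition}(iii), then finish with standard cluster-point topology arguments. First, for statement (1), I would rewrite Corollary \ref{lemma_Phi_kk1_Rd} as a supermartingale-type inequality $\mathbb{E}_{k}[\Psi_{k+1}] \le \Psi_k - \tfrac{\epsilon}{6}(\mathbb{E}_k\|A^{k+1}-A^k\|^2+\|A^k-A^{k-1}\|^2+\|A^{k-1}-A^{k-2}\|^2)$. Since $\Psi_k\ge 0$ under the stepsize choice \eqref{stepsize_set}, the Robbins--Siegmund almost supermartingale theorem yields a.s. convergence of $\Psi_k$ and $\sum_k \|A^{k+1}-A^k\|^2 < +\infty$ a.s., which forces $\|A^{k+1}-A^k\|\to 0$ a.s. Taking full expectation and telescoping gives $\sum_k \mathbb{E}\|A^{k+1}-A^k\|^2 < +\infty$, hence $\mathbb{E}\|A^{k+1}-A^k\|^2\to 0$ as well.

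For statement (2), the nonincreasing nonnegative sequence $\{\mathbb{E}\Psi_k\}$ converges to some $\Psi^\ast\ge 0$. Unpacking \eqref{lyapunov_function}, the Bregman terms $D_\psi^N(A^{k-1},A^k)$ and $D_\psi^N(A^k,A^{k+1})$ vanish in expectation by (1) and the strong convexity of $\psi$, while $\mathbb{E}\Gamma_{k+1}\to 0$ by Definition \ref{vr_definition}(iii) combined with $\mathbb{E}\|A^k-A^{k-1}\|^2\to 0$. Hence $\eta\,\mathbb{E}[\Phi(A^{k+1})-\mathcal{V}(\Phi)]\to \Psi^\ast$, so $\mathbb{E}\Phi(A^k)\to \Phi^\ast:=\mathcal{V}(\Phi)+\Psi^\ast/\eta \in[\mathcal{V}(\Phi),+\infty)$. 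For a cluster point $A^\ast\in\Omega(A^0)$ with $A^{k_l}\to A^\ast$ a.s., lower semicontinuity of $\Phi$ gives $\Phi(A^\ast)\le\liminf_l \Phi(A^{k_l})$; for the reverse inequality, I plug $A^\ast$ into the minimizer definition \eqref{xk_update} to obtain $\Phi(A^{k_l+1})\le \Phi(A^{k_l})$ up to terms controlled by $\|A^{k_l+1}-A^{k_l}\|$ and $\Upsilon_{k_l}$, both of which vanish, yielding $\mathbb{E}\Phi(A^\ast)=\Phi^\ast$.

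Statement (3) is then immediate from Lemma \ref{lemma_dist2}: all three squared-difference terms go to zero in expectation by (1), and $\mathbb{E}\Gamma_k\to 0$ by Definition \ref{vr_definition}(iii), so $\mathbb{E}[\mathrm{dist}(0,\partial\Phi(A^{k+1}))^2]\to 0$, and Jensen's inequality gives $\mathbb{E}[\mathrm{dist}(0,\partial\Phi(A^k))]\to 0$. Nonemptiness of $\Omega(A^0)$ follows from the a.s. boundedness of $\{A^k\}$. For $A^\ast\in\Omega(A^0)$, combining $A^{k_l}\to A^\ast$, $\Phi(A^{k_l})\to \Phi(A^\ast)$ (from the previous step), and the construction $P^{k_l+1}\in\partial\Phi(A^{k_l+1})$ with $\mathbb{E}\|P^{k_l+1}\|\to 0$, the closedness of the graph of the limiting subdifferential yields $0\in\partial\Phi(A^\ast)$ a.s., i.e. $\mathbb{E}[\mathrm{dist}(0,\partial\Phi(A^\ast))]=0$.

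Statement (4) is standard topology once (1) is in hand. A.s. boundedness of $\{A^k\}$ makes $\Omega(A^0)$ a.s. compact, and $\mathrm{dist}(A^k,\Omega(A^0))\to 0$ a.s. is the usual Bolzano--Weierstrass argument. Connectedness: if $\Omega(A^0)$ were a disjoint union of two nonempty closed sets $C_1,C_2$ with $\mathrm{dist}(C_1,C_2)=2d>0$, then $\|A^{k+1}-A^k\|\to 0$ a.s. prevents the trajectory from oscillating between $d$-neighborhoods of $C_1$ and $C_2$ for large $k$, contradicting both being accumulation sets. The main obstacle I anticipate is the cluster-point step in (2)--(3): verifying $\Phi(A^{k_l})\to \Phi(A^\ast)$ requires more than lsc, and the cleanest route is through the subproblem optimality, but this must be executed almost surely while reconciling with the expectation-based convergence established earlier, which is where care with the stochastic filtration is essential.
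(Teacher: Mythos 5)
Your proposal is correct and follows essentially the same route as the paper, which in fact omits the argument entirely and defers to Lemma 9 of \cite{WangH23} (itself patterned on the SPRING analysis of \cite{DriggsTLDS2020}): Lyapunov descent plus a Robbins--Siegmund supermartingale argument for (1), vanishing of the Bregman and $\Gamma_k$ terms in the Lyapunov function together with subproblem optimality at cluster points for (2), Lemma \ref{lemma_dist2} with Definition \ref{vr_definition}(iii), Jensen, and closedness of the limiting subdifferential graph for (3), and the standard compactness/connectedness topology for (4). The only cosmetic slip is that the limit in (2) should be $\Phi^{*}=\mathcal{V}(\Phi)+\Psi^{*}/\eta_{\infty}$ with $\eta_{\infty}=\lim_{k}\eta_{k}\ge\eta>0$ (the nonincreasing stepsizes converge) rather than $\Psi^{*}/\eta$, which does not affect any of the stated conclusions.
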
	
\begin{proof}
The proof of the above statements is similar to that of Lemma 9 in \cite{WangH23}, so we omit the details here for simplicity.
\end{proof}

The following lemma is from \cite{DriggsTLDS2020}, which is analogous to the Uniformized K{\L} property of \cite{BolteST14} and allows us to apply the K{\L} inequality.
\begin{lemma}\label{E_KL_inequality}
Assuming $\{A^{k}\}_{k\in\mathbb{N}}$ is a bounded sequence of iterates for all $k$ generated by the iTableSMD algorithm using a variance-reduced gradient estimator (see Definition \ref{vr_definition}). Let $\Phi$ be a semialgebraic function satisfying the K{\L} property \cite{BolteST14} with exponent $\theta$. Then there exists an index $\bar{k}$ and a desingularizing function $\phi(r) = ar^{1-\theta}$ with $a>0$, $\theta\in[0,1)$  so that the following bound holds almost surely (a.s.),
	\begin{eqnarray}
		\phi'(\mathbb{E}[\Phi(A^k)-\Phi_{k}^{*}])\mathbb{E}\mbox{dist}(0,\partial \Phi(A^{k}))\ge 1,\,\,\forall k>\bar{k},
	\end{eqnarray}
	where $\Phi_{k}^{*}$ is a nondecreasing sequence  converging to $\mathbb{E}\Phi(A^{*})$ for some $A^{*}\in\Omega(A_{0})$.
\end{lemma}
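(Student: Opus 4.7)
The plan is to apply the uniformized Kurdyka--{\L}ojasiewicz property on the compact limit set $\Omega(A_0)$ and then pass to expectations, following the strategy used for Lemma 11 in \cite{DriggsTLDS2020}. First, I invoke Lemma \ref{statements_lemma} to collect the following ingredients: the limit set $\Omega(A_0)$ is almost surely nonempty and compact; $\mathrm{dist}(A^k,\Omega(A_0))\to 0$ almost surely; the deterministic limit $\Phi^{*} := \lim_{k\to\infty}\mathbb{E}[\Phi(A^k)]$ exists and equals $\mathbb{E}\Phi(A^{*})$ for every cluster point $A^{*}\in\Omega(A_0)$; and $\mathbb{E}\,\mathrm{dist}(0,\partial\Phi(A^{*}))=0$ on $\Omega(A_0)$. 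These four facts supply all the qualitative information about the tail behaviour of $\{A^k\}$ that I will need.

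Next, because $\Phi$ is semialgebraic with K{\L} exponent $\theta\in[0,1)$, the desingularizing function may be chosen of the form $\phi(s) = a s^{1-\theta}$. Applying the uniformized K{\L} property (Lemma 6 of \cite{BolteST14}) to the compact set $\Omega(A_0)$, on which $\Phi$ is constant in expectation, yields positive constants $\varepsilon,\eta$ and a single $\phi$ such that the pointwise K{\L} inequality $\phi'(\Phi(A)-\Phi^{*})\,\mathrm{dist}(0,\partial\Phi(A))\ge 1$ holds for every $A$ with $\mathrm{dist}(A,\Omega(A_0))<\varepsilon$ and $\Phi^{*}<\Phi(A)<\Phi^{*}+\eta$. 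I then construct $\Phi_k^{*}$ as a nondecreasing sequence approaching $\Phi^{*}$ from below, for instance $\Phi_k^{*} := \inf_{j\ge k}\mathbb{E}[\Phi(A^j)]$, which is nondecreasing by construction, converges to $\Phi^{*}=\mathbb{E}\Phi(A^{*})$ by Lemma \ref{statements_lemma}(2), and satisfies $\mathbb{E}[\Phi(A^k)]\ge \Phi_k^{*}$, so the argument of $\phi'$ on the left-hand side stays nonnegative.

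Finally, combining $\mathrm{dist}(A^k,\Omega(A_0))\to 0$ a.s.\ with $\mathbb{E}[\Phi(A^k)]\to\Phi^{*}$, I choose $\bar k$ so that for all $k>\bar k$ the iterate $A^k$ lies in the uniformized K{\L} neighborhood with overwhelming probability, invoke the pointwise K{\L} inequality on this favorable event, and take expectation to reach the stated bound. The main obstacle is exactly this last passage: the pointwise inequality couples the two random quantities $\phi'(\Phi(A^k)-\Phi^{*})$ and $\mathrm{dist}(0,\partial\Phi(A^k))$ in a product, and converting it into the decoupled form $\phi'(\mathbb{E}[\Phi(A^k)-\Phi_k^{*}])\,\mathbb{E}\,\mathrm{dist}(0,\partial\Phi(A^k))\ge 1$ is delicate because $\phi'(s)=a(1-\theta)s^{-\theta}$ is decreasing in $s$, so a naive application of Jensen's inequality points the wrong way. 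I would resolve this by conditioning on the favorable event $\{A^k\in U,\ \Phi^{*}<\Phi(A^k)<\Phi^{*}+\eta\}$ (whose complement has vanishing probability as $k\to\infty$), exploiting the convexity of $\phi'$ on the conditional distribution, and using the fact that $\Phi_k^{*}\le \Phi^{*}$ makes the argument of $\phi'$ on the averaged side no larger than the pointwise one, so that monotonicity of $\phi'$ restores the required direction of the inequality. A secondary technicality, easily handled, is checking that the neighborhood radii $\varepsilon,\eta$ furnished by the uniformized K{\L} property may be used uniformly for all $k>\bar k$, which is automatic once $\bar k$ is chosen large enough.
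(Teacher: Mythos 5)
There is a genuine gap, and it sits exactly at the step you yourself flag as delicate. Note first that the paper offers no proof of Lemma \ref{E_KL_inequality} at all: it is imported verbatim from \cite{DriggsTLDS2020}, so the "paper's proof" is the citation, and the expected treatment is either to cite or to reproduce that construction. Your setup (collecting the facts from Lemma \ref{statements_lemma}, choosing $\phi(s)=as^{1-\theta}$, invoking the uniformized K{\L} property of \cite{BolteST14} on the compact limit set) is the standard opening, but the decisive passage from the pathwise inequality $\phi'(\Phi(A^k)-\Phi^{*})\,\mathrm{dist}(0,\partial\Phi(A^k))\ge 1$ to the decoupled statement $\phi'(\mathbb{E}[\Phi(A^k)-\Phi_k^{*}])\,\mathbb{E}[\mathrm{dist}(0,\partial\Phi(A^k))]\ge 1$ is not established by the argument you sketch. (A secondary point: the deterministic uniformized K{\L} lemma needs $\Phi$ constant on the compact set, whereas Lemma \ref{statements_lemma} only gives constancy \emph{in expectation} on the random set $\Omega(A_0)$; this also needs care.)

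Concretely, the proposed fix is backwards. Since $\Phi_k^{*}\le\Phi^{*}$, one has $\mathbb{E}[\Phi(A^k)-\Phi_k^{*}]\ge\mathbb{E}[\Phi(A^k)-\Phi^{*}]$, so monotonicity of $\phi'$ gives $\phi'(\mathbb{E}[\Phi(A^k)-\Phi_k^{*}])\le\phi'(\mathbb{E}[\Phi(A^k)-\Phi^{*}])$: the shift by $\Phi_k^{*}$ makes the left-hand factor \emph{smaller} and the target \emph{harder}, not easier. Moreover there is no pointwise domination between the deterministic argument $\mathbb{E}[\Phi(A^k)-\Phi_k^{*}]$ and the random argument $\Phi(A^k)-\Phi^{*}$: on realizations where $\Phi(A^k)$ falls below its mean the pointwise argument is the smaller one, so $\phi'$-monotonicity cannot be applied uniformly on the favorable event. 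Convexity of $\phi'$ does not rescue this: Jensen yields $\mathbb{E}[\phi'(\Phi(A^k)-\Phi^{*})]\ge\phi'(\mathbb{E}[\Phi(A^k)-\Phi^{*}])$, but the pathwise bound controls $\mathbb{E}[\phi'(\cdot)\,\mathrm{dist}(\cdot)]$, and splitting this product requires correlation control that is absent (the two factors are typically negatively correlated: a small residual makes $\phi'$ large but the subgradient small). Equivalently, writing $\phi'(s)=a(1-\theta)s^{-\theta}$, the inequality you need is $\mathbb{E}[(\Phi(A^k)-\Phi^{*})^{\theta}]\ge(\mathbb{E}[\Phi(A^k)-\Phi_k^{*}])^{\theta}$, while concavity of $s\mapsto s^{\theta}$ gives the opposite bound. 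Finally, conditioning on a "favorable event of overwhelming probability" does not suffice, because the asserted inequality is an unconditional expectation and the bad-event contribution is never controlled. If you want a self-contained proof, you should follow the construction in \cite{DriggsTLDS2020}, where the index $\bar k$ and the nondecreasing sequence $\Phi_k^{*}$ are built as part of the argument (their existence is part of the conclusion), rather than fixing $\Phi_k^{*}=\inf_{j\ge k}\mathbb{E}[\Phi(A^j)]$ in advance and attempting a Jensen/conditioning decoupling, which as written does not close.
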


Now we give the global convergence result of the iTableSMD algorithm in the following theorem which can be proved by the above lemmas and we omit the proof details here. See \cite{DriggsTLDS2020, WangH23} for details. 

\begin{theorem}\label{global_convergence}
Suppose that Assumptions \ref{assume_01}-\ref{assume_03} hold, the step $\eta_{k}$ satisfies $0<\eta\le \eta_{k}$ and \eqref{stepsize_set}. Let $\{A^{k}\}_{k\in\mathbb{N}}$ be the sequence generated by the iTableSMD algorithm which is assumed to be bounded. If the optimization function $\Phi$ is a semialgebraic function that satisﬁes the K{\L} property with exponent $\theta \in [0, 1)$ (see Lemma \ref{E_KL_inequality}), then either the point $A^{k}$ is a critical point after a ﬁnite number of iterations or the sequence $\{A^{k}\}_{k\in\mathbb{N}}$ almost surely satisﬁes the ﬁnite length property in expectation, namely,
\[
\sum_{k=0}^{+\infty}\mathbb{E}\|A^{k+1}-A^{k}\|<+\infty.
\]
\end{theorem}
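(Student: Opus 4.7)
The plan is to follow the standard Kurdyka--{\L}ojasiewicz finite-length argument of Attouch--Bolte--Svaiter, adapted to the stochastic Bregman setting as in \cite{DriggsTLDS2020, WangH23}, and to apply it to the Lyapunov function $\Psi_k$ of \eqref{lyapunov_function} rather than to $\Phi(A^k)$ directly. Three building blocks are already in place: (a) the one-step descent $\mathbb{E}_k[\Psi_{k+1}] \le \Psi_k - (\epsilon/6)\Delta_k$ with $\Delta_k := \mathbb{E}_k\|A^{k+1}-A^k\|^2 + \|A^k-A^{k-1}\|^2 + \|A^{k-1}-A^{k-2}\|^2$ (Corollary \ref{lemma_Phi_kk1_Rd}); (b) the quadratic subgradient bound $\mathbb{E}[\mathrm{dist}(0,\partial \Phi(A^{k+1}))^2] \le \bar w\,\Delta_k + 3\,\mathbb{E}\Gamma_k$ (Lemma \ref{lemma_dist2}); and (c) the uniformized KL inequality along the iterates in expectation (Lemma \ref{E_KL_inequality}). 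Boundedness of $\{A^k\}$, assumed by hypothesis, guarantees the existence of cluster points by Lemma \ref{statements_lemma}(4).

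First I would verify that $\{\mathbb{E}\Psi_k\}$ is non-increasing with a finite limit $\Psi_\ast$: non-increase is immediate from (a) since $\epsilon>0$, while finiteness follows from the lower boundedness of $\Phi$ (Assumption \ref{assume_01}(vi)) together with the non-negativity of the remaining Bregman and variance terms in \eqref{lyapunov_function}. Lemma \ref{statements_lemma} then lets one identify $\Psi_\ast$ with $\mathbb{E}\Phi(A^\ast)$ for any cluster point $A^\ast \in \Omega(A_0)$, because Lemma \ref{statements_lemma}(1) forces $\mathbb{E} D^N_\psi(A^{k-1},A^k) \to 0$ and the geometric-decay property (ii) of Definition \ref{vr_definition} forces $\mathbb{E}\Gamma_k \to 0$.

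The core of the argument is a telescoping inequality for the concave desingularizing function $\phi(r) = a r^{1-\theta}$. Concavity of $\phi$ together with (a) yields
\begin{equation*}
\phi(\mathbb{E}[\Psi_k - \Psi_\ast]) - \phi(\mathbb{E}[\Psi_{k+1} - \Psi_\ast]) \;\ge\; \frac{\epsilon}{6}\,\phi'(\mathbb{E}[\Psi_k - \Psi_\ast])\,\Delta_k ,
\end{equation*}
while Lemma \ref{E_KL_inequality} combined with (b) gives $\phi'(\mathbb{E}[\Psi_k - \Psi_\ast]) \ge (\bar w\,\Delta_{k-1} + 3\,\mathbb{E}\Gamma_{k-1})^{-1/2}$ for every $k > \bar k$. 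Substituting, then invoking the elementary Young inequality
\begin{equation*}
\frac{\Delta_k}{\sqrt{\Delta_{k-1} + \mathbb{E}\Gamma_{k-1}}} \;\ge\; 2\sqrt{\Delta_k} - \sqrt{\Delta_{k-1} + \mathbb{E}\Gamma_{k-1}},
\end{equation*}
and summing from $\bar k$ to $K$ while telescoping the $\phi$-increments, produces
\begin{equation*}
\sum_{k=\bar k}^{K} \sqrt{\Delta_k} \;\le\; C_1\,\phi(\mathbb{E}[\Psi_{\bar k} - \Psi_\ast]) \;+\; C_2 \sum_{k=\bar k}^{K} \sqrt{\mathbb{E}\Gamma_k} \;+\; C_3 .
\end{equation*}
The right-hand side is finite: $\phi$ is continuous with $\phi(0)=0$, and $\sum_k \sqrt{\mathbb{E}\Gamma_k} < \infty$ because \eqref{Gamma_k1_k} combined with $\sum_k \mathbb{E}\|A^{k+1}-A^k\|^2 < \infty$ (Lemma \ref{statements_lemma}(1)) forces geometric-type decay of $\mathbb{E}\Gamma_k$. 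By Jensen one has $\mathbb{E}\|A^{k+1}-A^k\| \le \sqrt{\mathbb{E}\|A^{k+1}-A^k\|^2} \le \sqrt{\Delta_k}$, so the conclusion $\sum_k \mathbb{E}\|A^{k+1}-A^k\| < +\infty$ follows.

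The main obstacle is the three-term mismatch between (a) and (b): the descent controls the current squared increment plus the two previous ones, whereas the subgradient bound also depends on the residual $\mathbb{E}\Gamma_k$. This is exactly why $\Psi_k$ in \eqref{lyapunov_function} was engineered to absorb two consecutive Bregman terms \emph{and} the variance residual $\Gamma_{k+1}$; once this is set up, the shifted-index AM--GM bookkeeping of \cite{DriggsTLDS2020, WangH23} transfers with only the constants adjusted for the block-randomized sampling across $n \in \{1,\dots,N\}$. The only genuinely new verification is that block sampling does not destroy the geometric decay of $\mathbb{E}\Gamma_k$, but this is automatic because $\mathbb{E}_k$ in \eqref{Gamma_k1_k} already averages over both the block index $\xi^k$ and the fiber set $\zeta^k$.
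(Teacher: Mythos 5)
Your skeleton---the descent of the Lyapunov function (Corollary \ref{lemma_Phi_kk1_Rd}), the subgradient bound (Lemma \ref{lemma_dist2}), the uniformized K{\L} inequality (Lemma \ref{E_KL_inequality}), concavity of the desingularizer, an AM--GM step and telescoping---is exactly the route the paper points to (it omits the details and defers to \cite{DriggsTLDS2020, WangH23}). But two of your steps have genuine gaps. First, Lemma \ref{E_KL_inequality} states $\phi'(\mathbb{E}[\Phi(A^k)-\Phi_k^{*}])\,\mathbb{E}\,\mathrm{dist}(0,\partial\Phi(A^k))\ge 1$; it says nothing about $\Psi_k-\Psi_{*}$. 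Since $\phi'$ is decreasing and $\Psi_k$ equals $\eta_{k-1}(\Phi^k-\mathcal V(\Phi))$ plus additional nonnegative Bregman and variance terms (with a different anchor $\Psi_{*}$ than the nondecreasing sequence $\Phi_k^{*}$, and a stepsize scaling), the inequality $\phi'(\mathbb{E}[\Psi_k-\Psi_{*}])\ge(\bar w\,\Delta_{k-1}+3\,\mathbb{E}\Gamma_{k-1})^{-1/2}$ you invoke does not follow from the lemma. You would need either to establish a K{\L} inequality for the Lyapunov function itself (e.g.\ via semialgebraicity of $(x,y)\mapsto\Phi(x)+c\|x-y\|^2$, as in the iSPALM/BPSGE analyses) or to desingularize $\mathbb{E}[\Phi(A^k)]-\Phi_k^{*}$ and control its mismatch with $\Psi_k$ using concavity/subadditivity of $\phi$ and summability of the extra terms; as written the step is unjustified.

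Second, and more seriously, your claim that $\sum_k\sqrt{\mathbb{E}\Gamma_k}<\infty$ because \eqref{Gamma_k1_k} ``forces geometric-type decay'' is circular. Unrolling \eqref{Gamma_k1_k} gives $\mathbb{E}\Gamma_k\lesssim(1-\tau)^k\,\mathbb{E}\Gamma_0+\sum_{j\le k}(1-\tau)^{k-j}\,\mathbb{E}\bigl[\|A^j-A^{j-1}\|^2+\|A^{j-1}-A^{j-2}\|^2\bigr]$, so $\sum_k\mathbb{E}\Gamma_k<\infty$ does follow from the square-summability in Lemma \ref{statements_lemma}(1); but summability of a nonnegative sequence does not imply summability of its square roots, and here $\sum_k\sqrt{\mathbb{E}\Gamma_k}$ is controlled only by $\sum_j\sqrt{\mathbb{E}\|A^j-A^{j-1}\|^2}$, which (by Jensen) is precisely the finite-length quantity you are trying to prove. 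Thus the right-hand side of your telescoped inequality is not known to be finite and the argument collapses at its final step. The correct bookkeeping---and what \cite{DriggsTLDS2020, WangH23} actually do---keeps the variance term inside the recursion: use the non-squared subgradient bound of Lemma \ref{subgradient_bound} with $\Upsilon_k$ (or convert $\Gamma_k$ into telescoping differences via \eqref{Gamma_k1_k}) so that the resulting increment terms appear with coefficients strictly less than one and can be absorbed into the left-hand side, rather than treating $\sum_k\sqrt{\mathbb{E}\Gamma_k}$ as an independently finite series.
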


\newpage
\section{Numerical experiments}\label{numercial_experiments}

In this section, we evaluate the proposed iTableSMD (Algorithm \ref{iTableSMD}) using synthetic datasets as well as multiple real-world datasets. We aim to demonstrate its superior efficiency through comparisons with state-of-the-art algorithms as our main baseline. The first one is an entry-sampling based stochastic non-Euclidean CP decomposition  optimization algorithm, namely, GCP-OPT, proposed in \cite{Kolda2019, GCP2020}. The GCP-OPT method is implemented in Tensor Toolbox and ``Adam'’ is selected as the optimization solver. The sampling rule of GCP-OPT is the default ``uniform'’ setting for dense tensors unless specified for particular examples. The Second one is a tensor
fiber-sampling based flexible stochastic mirror descent framework denoted by SmartCPD \cite{Pu2022}.

We perform experiments involving low-rank GCP decomposition with nonnegative constraints  $A_{n}\ge0$  for $n=1,\dots,N$. Our focus extends to three distinct synthetic data distributions: Gamma, Poisson, and Bernoulli. Additionally, we incorporate several real datasets into our analysis, including the Enron emails dataset \cite{shetty2004enron}, six months of Uber pickup data \cite{frosttdataset} in New York City, both of which are characterized by integer counts following the Poisson distribution. 
Each element in the Enron emails dataset represents the sender-receiver-word, and the values are counts of words. 
Each element in the  Uber pickup data represents the date-latitude-longitude of pickup, and the values are counts of pickups.
We also test the tags from the Flickr dataset \cite{gorlitz2008}, where non-zero values are binary, indicating user tagging of images on a given day. In synthetic experiments, we generate third-order tensors with different sizes and ranks and we do not require each dimension of the tensor to remain the same. For the fiber-sampling based algorithms, in each iteration, iTableSMD and SmartCPD sample $2R$ fibers, while for entry-sampling algorithm, GCP-OPT samples $2\frac{\sum_{i=1}^N I_{n}}{N}R$ entries. The generating function of Bregman distance is chosen as $\psi(a)= a\log a$ in the update of iTableSMD and SmartCPD.  The numerical experiment performance is measured by the cost function value (denotes by “NRE”) and  the mean squared error (MSE). The MSE of the latent matrices is used as a performance metric, which is defined as 
$$
\mathrm{MSE}=\min _{\pi(r) \in [R]} \frac{1}{R} \sum_{r=1}^R\left\|\frac{\boldsymbol{A}_{n}(:, \pi(r))}{\left\|\boldsymbol{A}_{n}(:, \pi(r))\right\|_2}-\frac{\bar{\boldsymbol{A}}_{n}(:, r)}{\left\|\bar{\boldsymbol{A}}_{n}(:, r)\right\|_2}\right\|^2,
$$
where $\bar{\boldsymbol{A}}_{n}$ denotes the estimate of original matrix $\boldsymbol{A}_{(n)}$ and $\{\pi(1), \ldots, \pi(R)\}$ represents a permutation of the set $[R]=\{1, \ldots, R\}$, which is used to fix the intrinsic column permutation in CP decomposition. 



\subsection{Synthetic data experiments} 
\subsubsection{Gamma distribution} 
In this subsection, 
we compute the GCP decomposition on two artificial three-way tensors of size $150\times100\times150$ and $300\times400\times300$ with different ranks using the gamma loss function: $x/m+\log (m)$. In practice, we use the constraint $m \ge 0$ and replace $m$ with $m + \epsilon$ (e.g., $\epsilon=10^{-9}$) in the loss function to prevent function values or gradients from becoming $\pm \infty$. 
Namely, $f(m \,; x) = x/(m+\epsilon)+\log (m+\epsilon)$.
With nonnegative constraints on the factor matrices, the latent factors $A_1$, $A_2$, and $A_3$ are drawn from i.i.d.uniform distribution between 0 and $A_{max}$, where the $A_{max}=0.5$ is a positive constant. The observed nonnegative data tensor $\mathcal{X}$ is generated following the gamma distribution, i.e. $\underline{\mathcal{X}}_{i}\sim Gamma (\underline{\mathcal{M}}_{i})$.  Namely, we focus on 
\begin{eqnarray*}\label{GCP_gamma}
 \begin{aligned}
\min _{A_1, A_2,  A_3} &\quad  \frac{1}{I^N} \sum_{i \in \mathcal{I}} \underline{\mathcal{X}}_{i}/ (\underline{\mathcal{M}}_{i}+\epsilon)+\log (\underline{\mathcal{M}}_{i}+\epsilon)+\sum_{n=1}^3 h_n\left(A_n\right) \\
 \text { s.t. } & \quad \underline{\mathcal{M}}_{i}=\sum_{r=1}^R \prod_{n=1}^3 {A}_n\left(i_n, r\right), \forall\, i \in \mathcal{I}, \\
\end{aligned}   
\end{eqnarray*}

We set the inertial parameters as $\alpha^{k}=\frac{3(k-1)}{5(k+2)}$, $\beta^{k}=\frac{4(k-1)}{5(k+2)}$ for simplicity\footnote{Theoretically, the inequality \eqref{extra_ineq}   is required. However, it is time-consuming to check this inequality in the numerical experiments. Therefore, we directly set $\beta^{k}=\frac{4(k-1)}{5(k+2)}$. Our numerical experiments show that iTableSMD always converges with this $\beta^k$.} and set the stepsize as $\eta^{k}=0.1$ to verify the difference between  SmartCPD with SGD and SAGA, GCP-OPT, and iTableSMD with SGD and SAGA. Our numerical results for two synthetic data are presented in Figure \ref{syn_gamma_exp}. 

The first synthetic experiment, visualized in the top row of the figure, captures the algorithmic performance across a tensor of dimensions $150\times100\times150$, evaluated at varying ranks. 
Notably,  the iTableSMD, especially when coupled with the SAGA, achieves a rapid improvement in MSE. For instance, in Figure \ref{syn_gamma_exp}(a), iTableSMD-SAGA reduces the MSE to below $10^{-6}$ within an average time of fewer than 3 seconds, outpacing SmartCPD, which requires a minimum of 6 seconds, and GCP-OPT, which exceeds 10 seconds to attain comparable MSE reductions. 


The second row in Figure \ref{syn_gamma_exp} evaluates the efficacy of various algorithms on a tensor with increased dimensions $300\times400\times300$. It is clear that as the tensor size increases, the iTableSMD method gains a lower MSE within the same time compared to SmartCPD and GCP-OPT. The noticeable improvement over the SmartCPD method indicates that the inertial acceleration framework of iTableSMD has a significant impact on its performance, helping to achieve faster convergence. 


 
\begin{figure}[!htb]
	\setlength\tabcolsep{2pt}
	\centering
	\begin{tabular}{ccc}
		\includegraphics[width=0.32\textwidth]{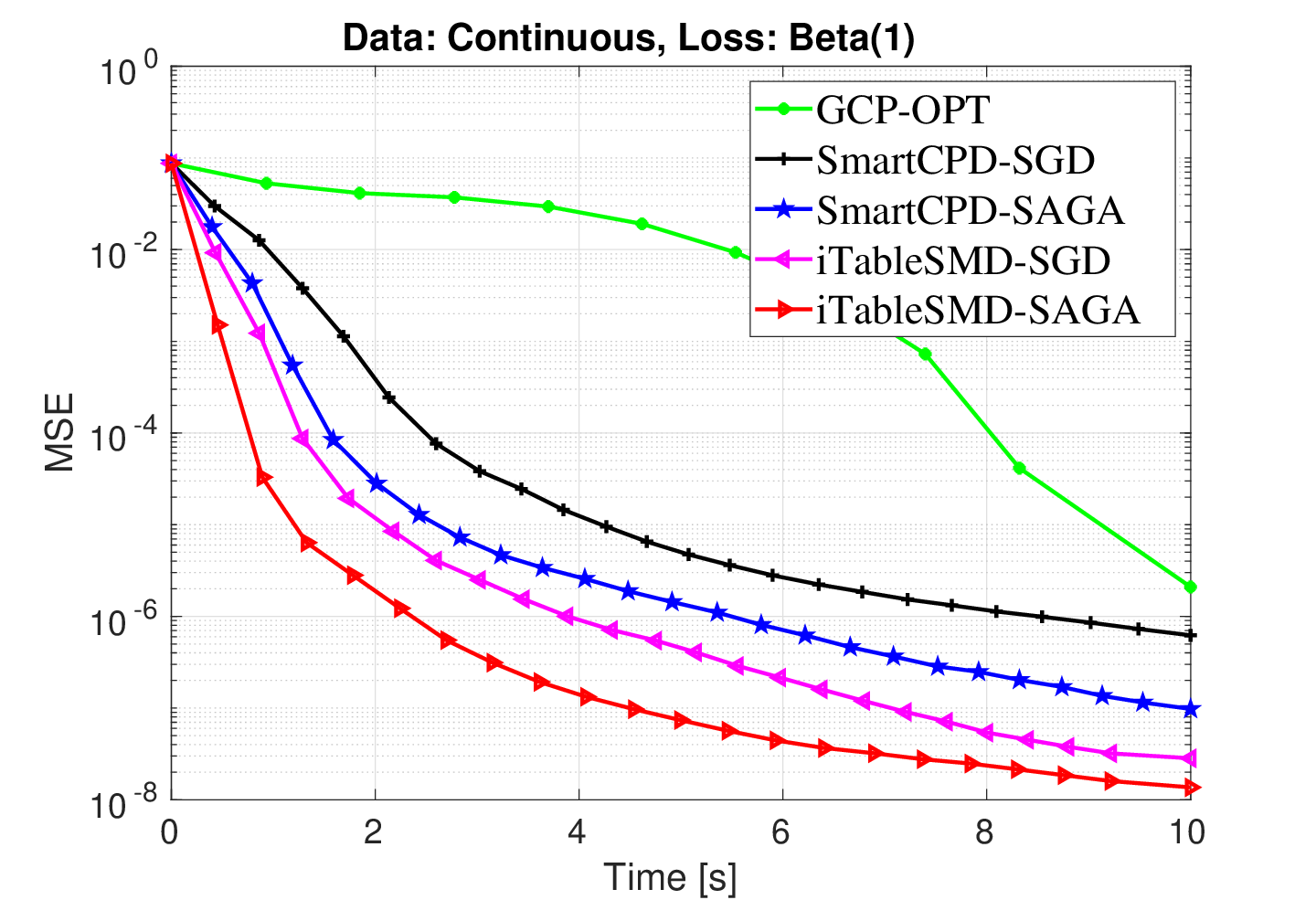}\label{a}&
		\includegraphics[width=0.32\textwidth]{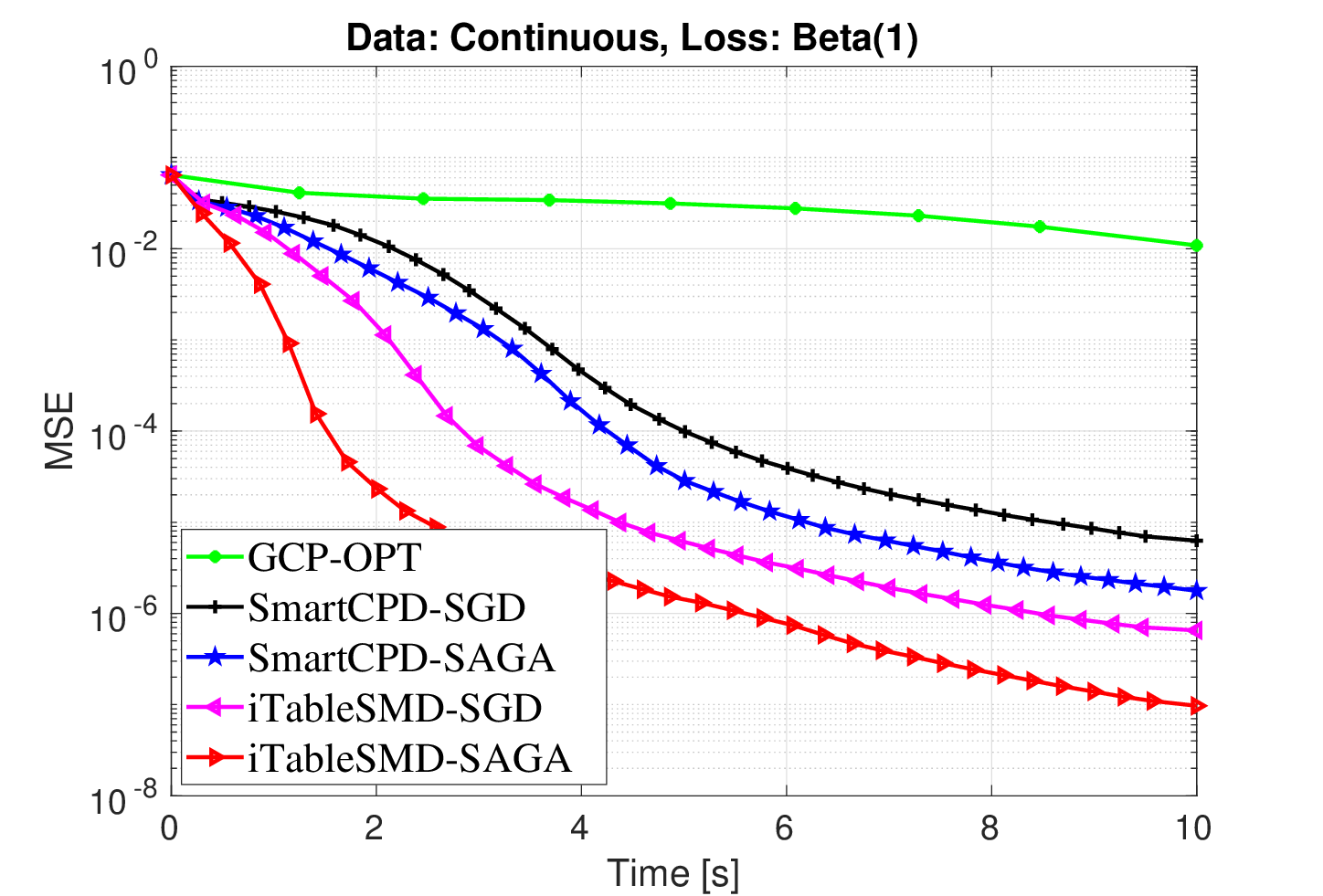}&
        \includegraphics[width=0.32\textwidth]{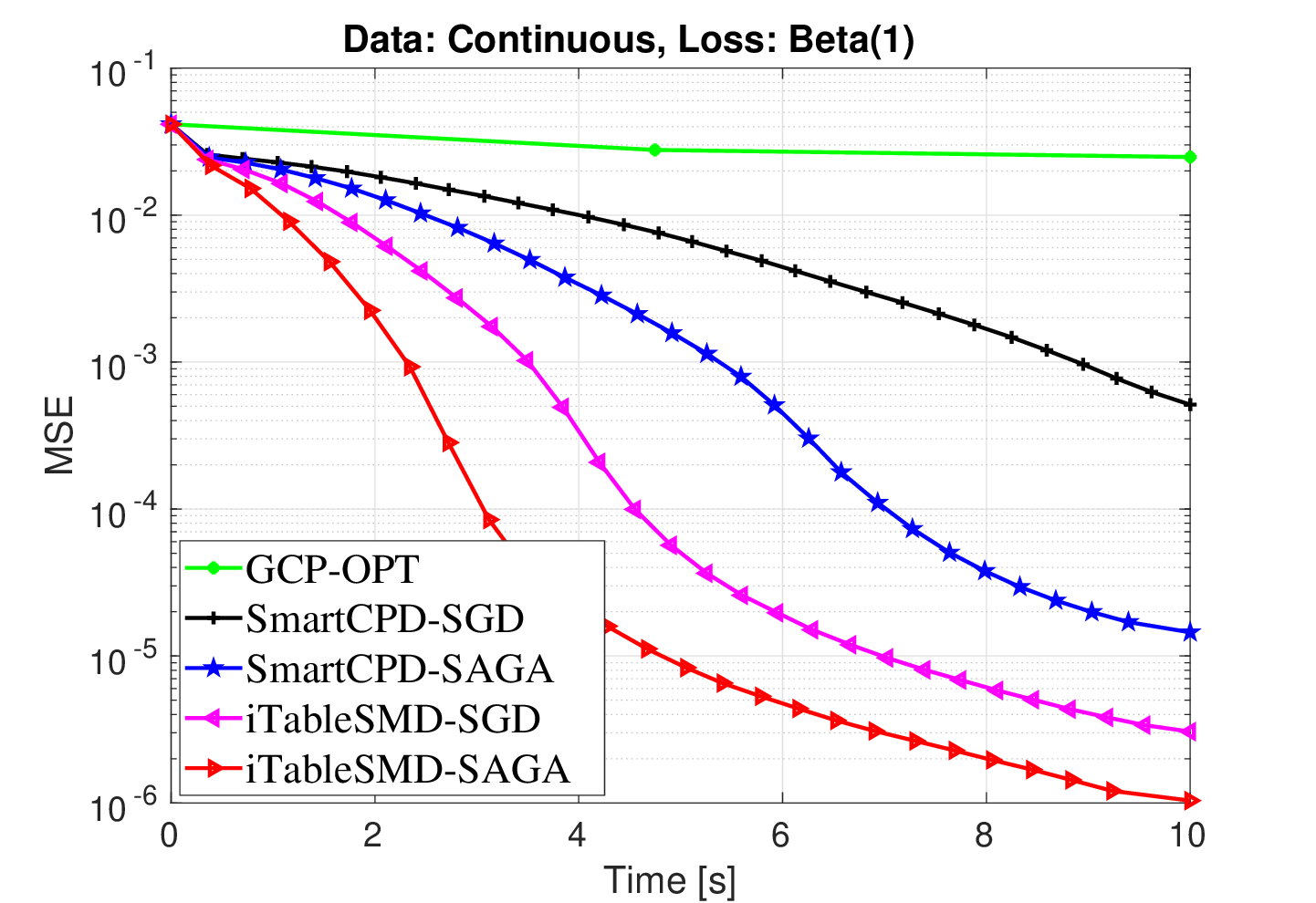}\\
		(a) $150\times100\times150$, $R=15$&(b) $150\times100\times150$,  $R=20$&(c) $150\times100\times150$, $R=30$\\
  \includegraphics[width=0.32\textwidth]{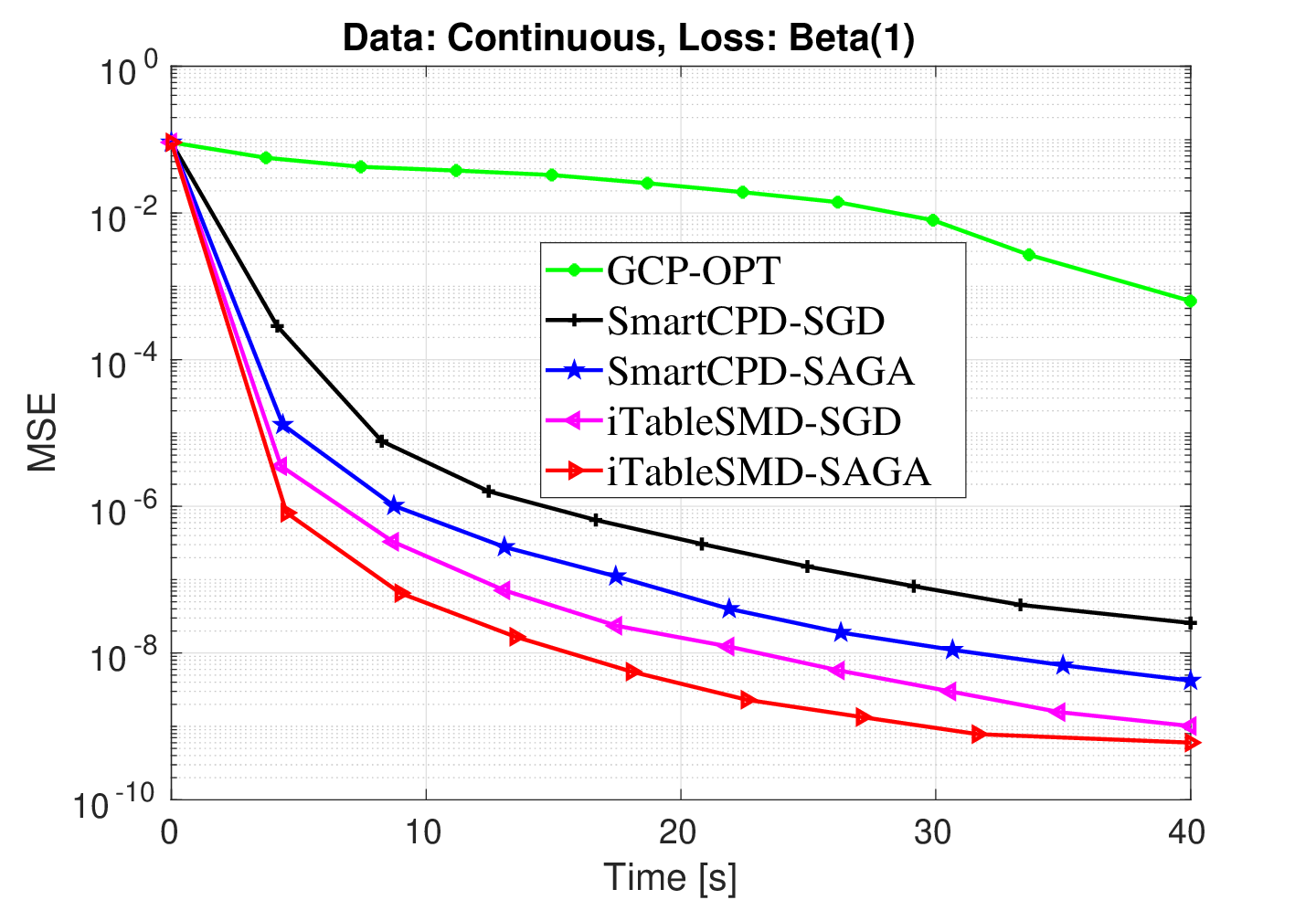}&
		\includegraphics[width=0.32\textwidth]{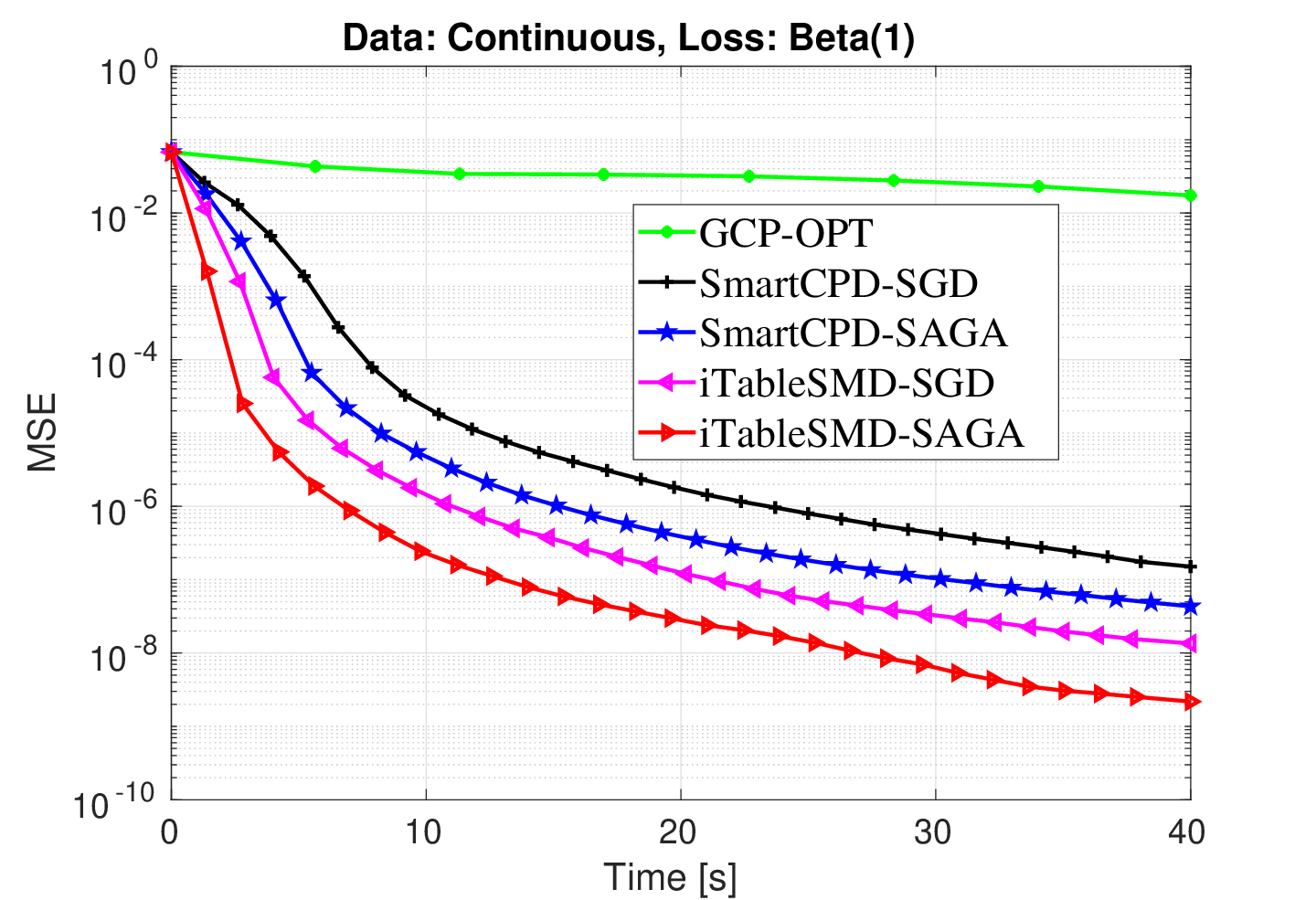}&
        \includegraphics[width=0.32\textwidth]{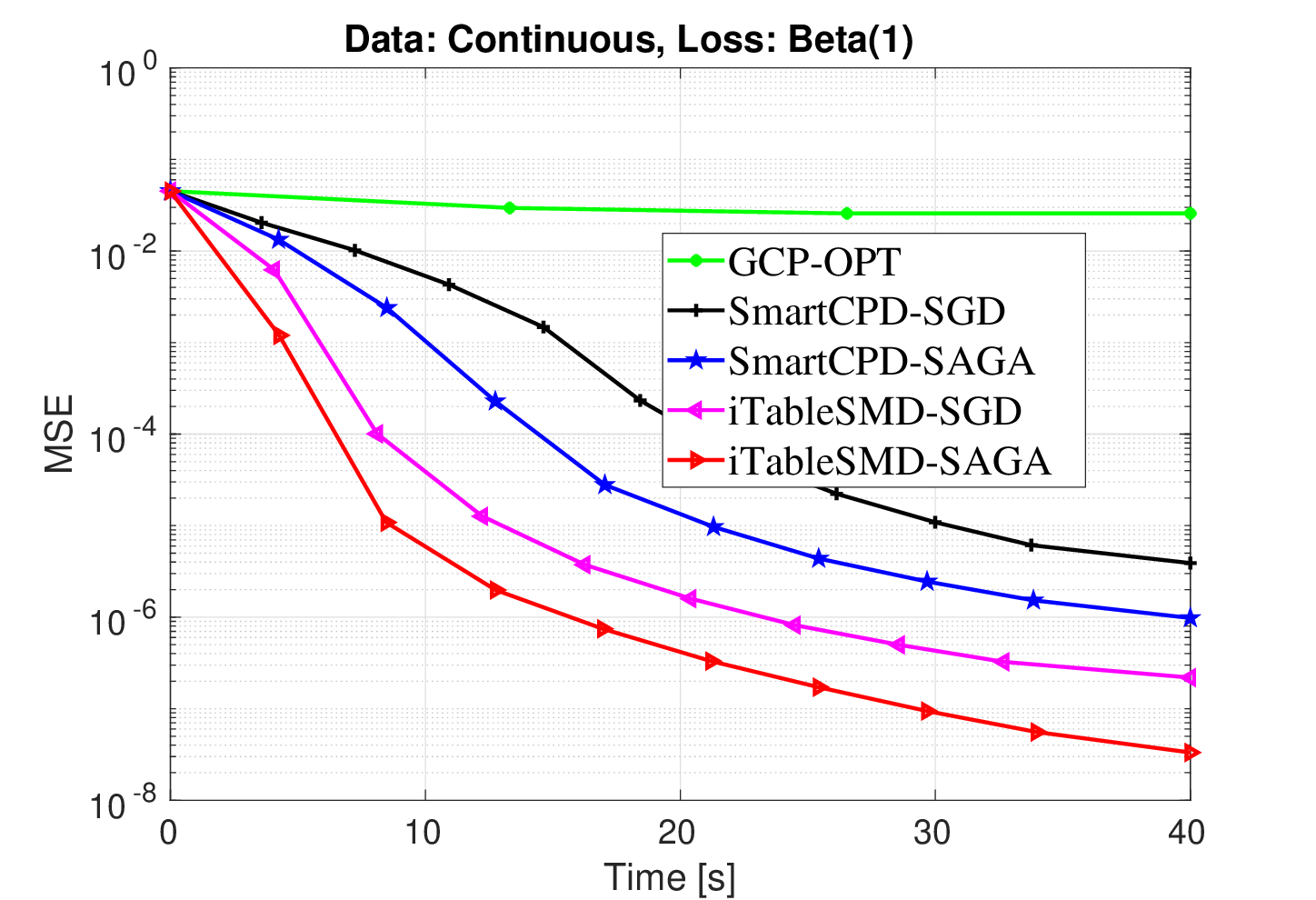}\\
		(d) $300\times400\times300$, $R=15$&(e) $300\times400\times300$,  $R=20$&(f) $300\times400\times300$, $R=30$
	\end{tabular}
	\caption{Numerical experiments for Gamma distribution on synthetic datasets. 
	}
	\label{syn_gamma_exp}
\end{figure}

\subsubsection{Poisson distribution}

We next evaluate the performance on two synthetic count data tensors with the size of $150\times51\times152$ and $300\times300\times300$ . For simplicity in our experiments, we set inertial parameters $\alpha^{k}=\frac{3(k-1)}{5(k+2)}$ and $\beta^{k}=\frac{4(k-1)}{5(k+2)}$ using a formula based on the iteration $k$, and chose a stepsize as $\eta^{k}=0.2$. The loss function used is a modification of the standard Poisson log-likelihood and is defined as $f(m \,; x) = m-xlog(m+\epsilon)$.  We initialized the factor matrices $A_1$, $A_2$, and $A_3$ with values uniformly distributed between 0 and a set maximum $A_{max}$, here chosen as 0.5. The observed count data tensor $\mathcal{X}$ is generated following the Poisson distribution, i.e.
$\underline{\mathcal{X}}_{i}\sim Poisson (\underline{\mathcal{M}}_{i})$.  
\begin{eqnarray*}
 \begin{aligned}
\min _{A_1, A_2,  A_3} &\quad  \frac{1}{I^N} \sum_{i \in \mathcal{I}}  \underline{\mathcal{M}}_{i}+\underline{\mathcal{X}}_{i}\log (\underline{\mathcal{M}}_{i}+\epsilon)+\sum_{n=1}^3 h_n\left(A_n\right) \\
 \text { s.t. } & \quad \underline{\mathcal{M}}_{i}=\sum_{r=1}^R \prod_{n=1}^3 {A}_n\left(i_n, r\right), \forall\, i \in \mathcal{I}, \\
\end{aligned}   
\end{eqnarray*}

In Figure \ref{syn_poisson_exp}, we see the results of numerical experiments on synthetic datasets modeled with Poisson distribution for tensors of two sizes, with varying tensor ranks. 
For the smaller tensor ($150\times51\times152$), as the rank increases from $R=10$ to $R=20$,  the iTableSMD-SAGA maintains a lower MSE compared to others, indicating a more efficient performance. In particular, for R = 20 in Figure \ref{syn_poisson_exp}(c), iTableSMD-SAGA reduces the MSE significantly faster than the other methods in 10 seconds. 
For the larger tensor ($300\times300\times300$), in Figures \ref{syn_poisson_exp}(d)-(f), as the rank grows, the MSE tends to decrease at a slower rate for all methods. However, the iTableSMD-SAGA still shows a consistent advantage, reaching lower MSEs quicker than the competing algorithms, which becomes more notable as the rank moves to 30 and beyond. From figures \ref{syn_poisson_exp}(b)-(e),  compared with figure \ref{syn_gamma_exp}, we can see that SmartCPD can not always perform better than GCP-OPT, since the type of data distribution may affect its effectiveness. However, iTableSMDs continue to show superior performance in terms of iteration speed, regardless of the change in data distribution.

\begin{figure}[!htb]
	\setlength\tabcolsep{2pt}
	\centering
	\begin{tabular}{ccc}
		\includegraphics[width=0.32\textwidth]{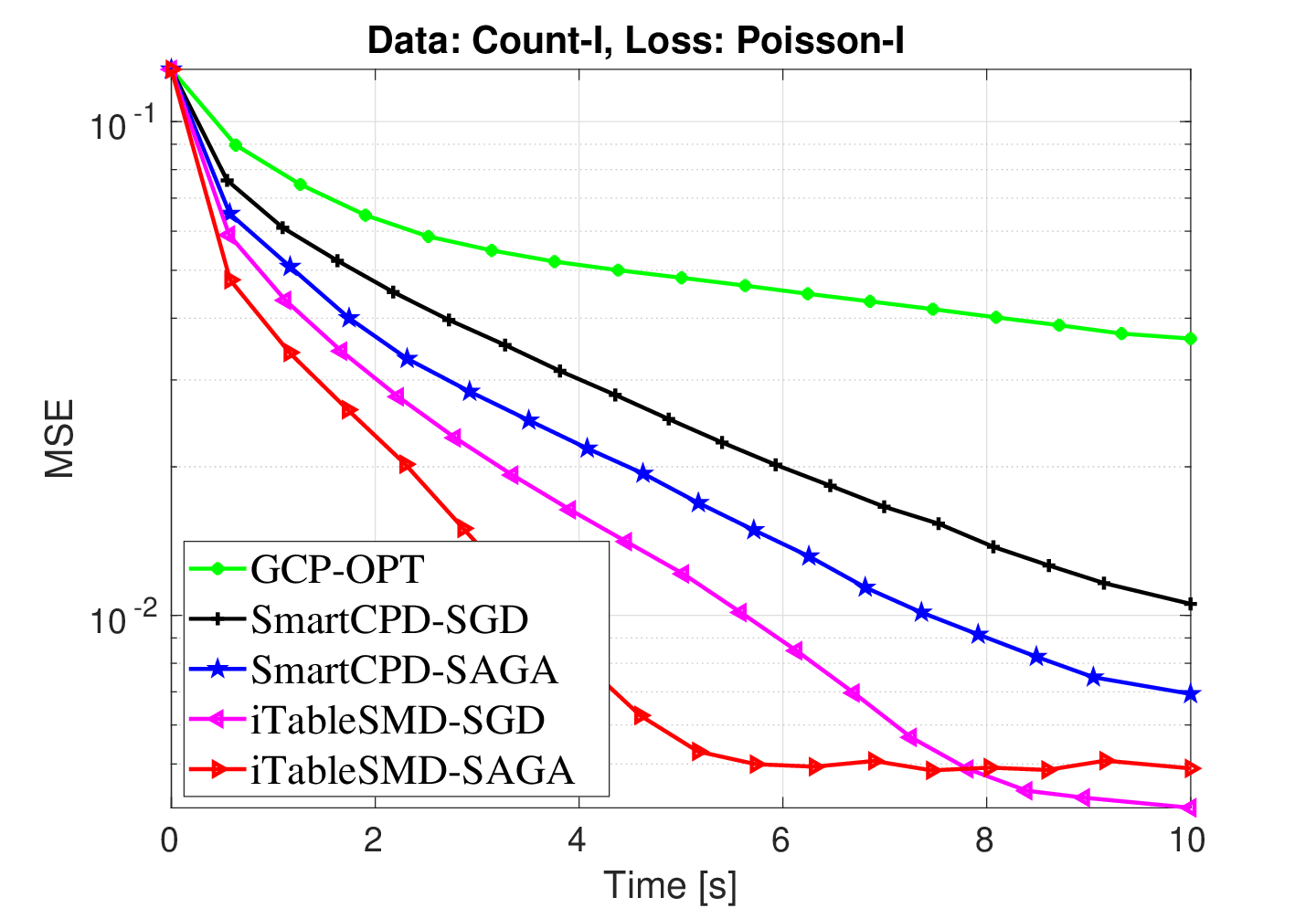}&
		\includegraphics[width=0.32\textwidth]{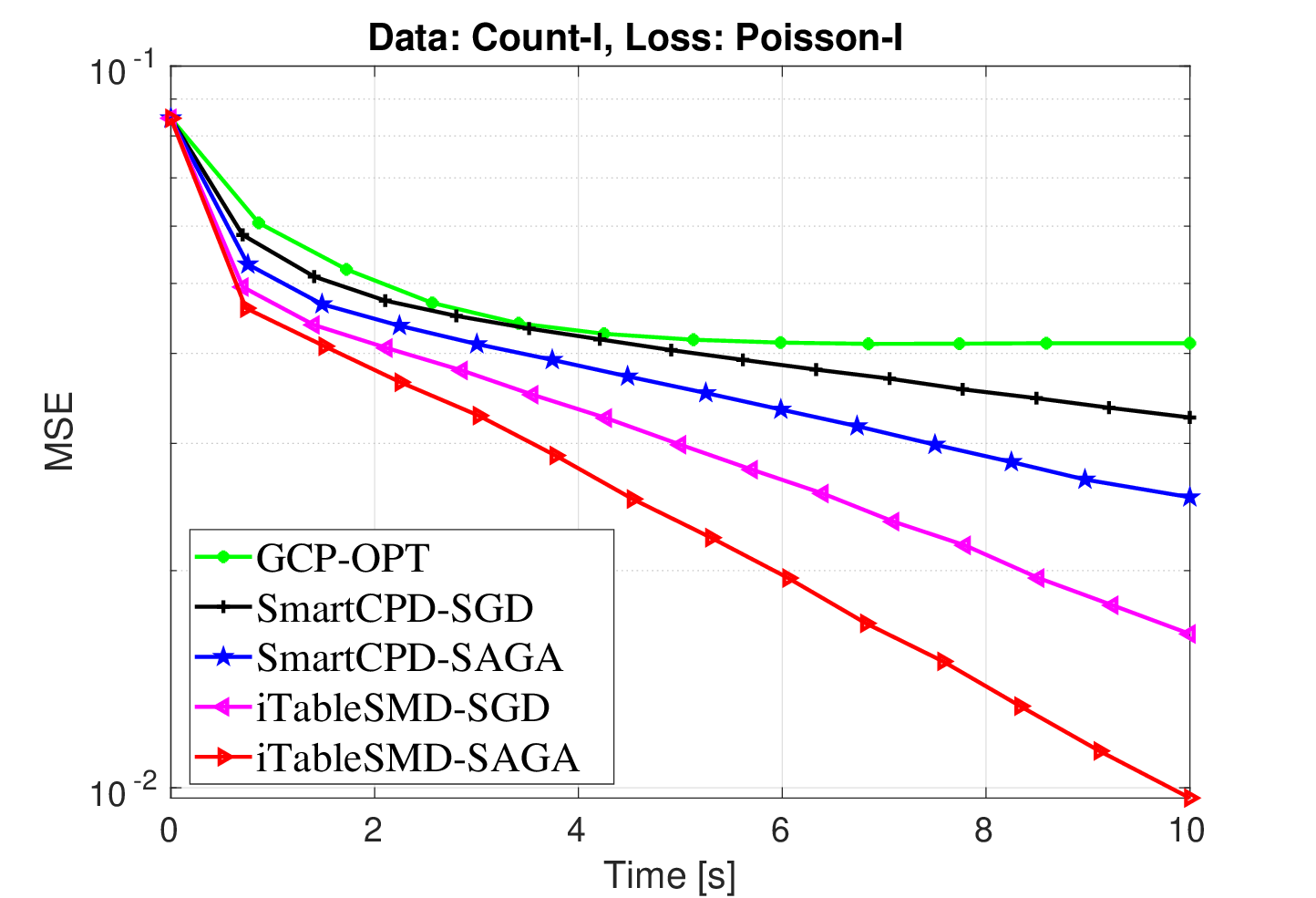}&
        \includegraphics[width=0.32\textwidth]{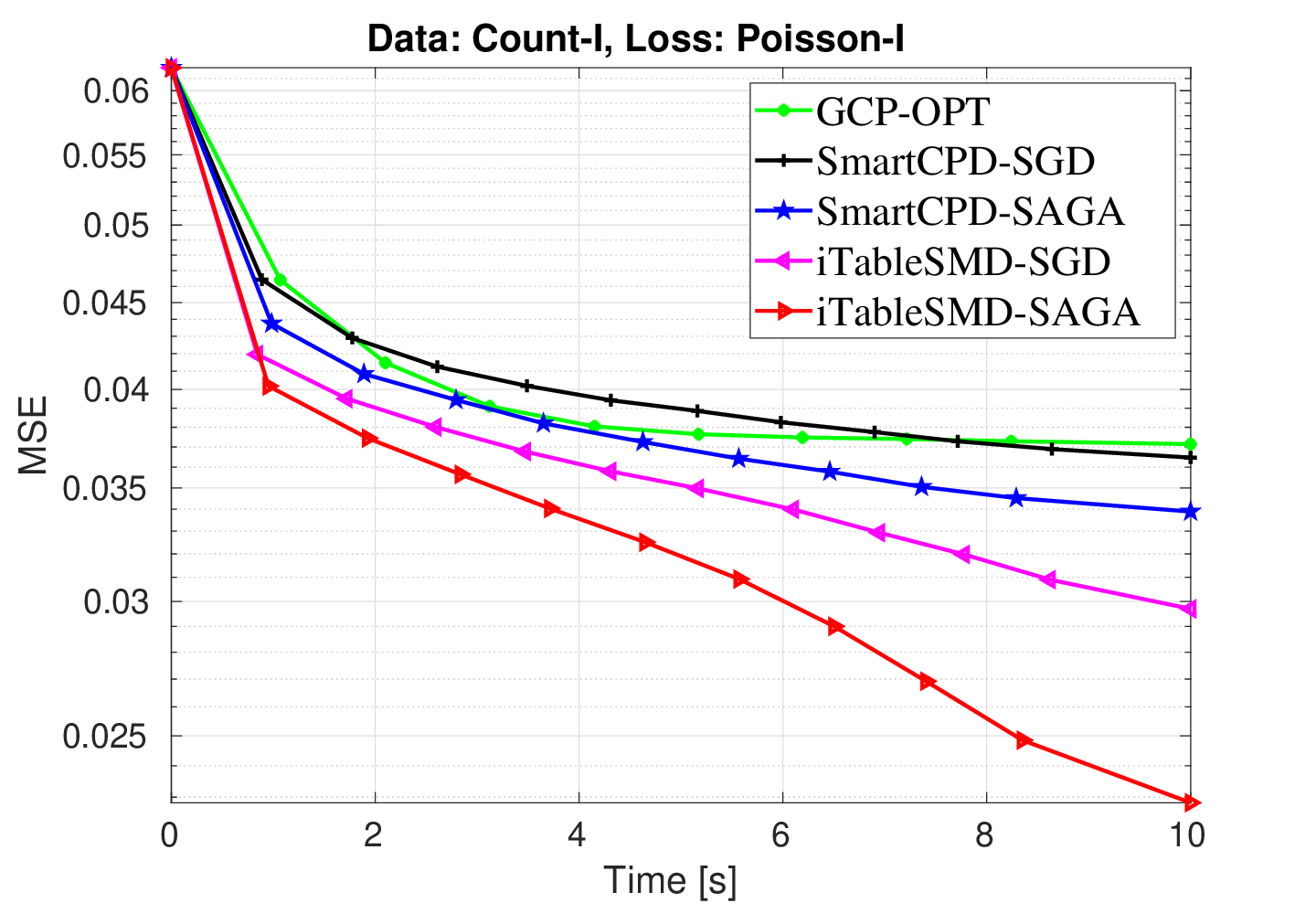}\\
		(a) $150\times51\times152$, $R=10$&(b) $150\times51\times152$,  $R=15$&(c) $150\times51\times152$, $R=20$\\
        \includegraphics[width=0.32\textwidth]{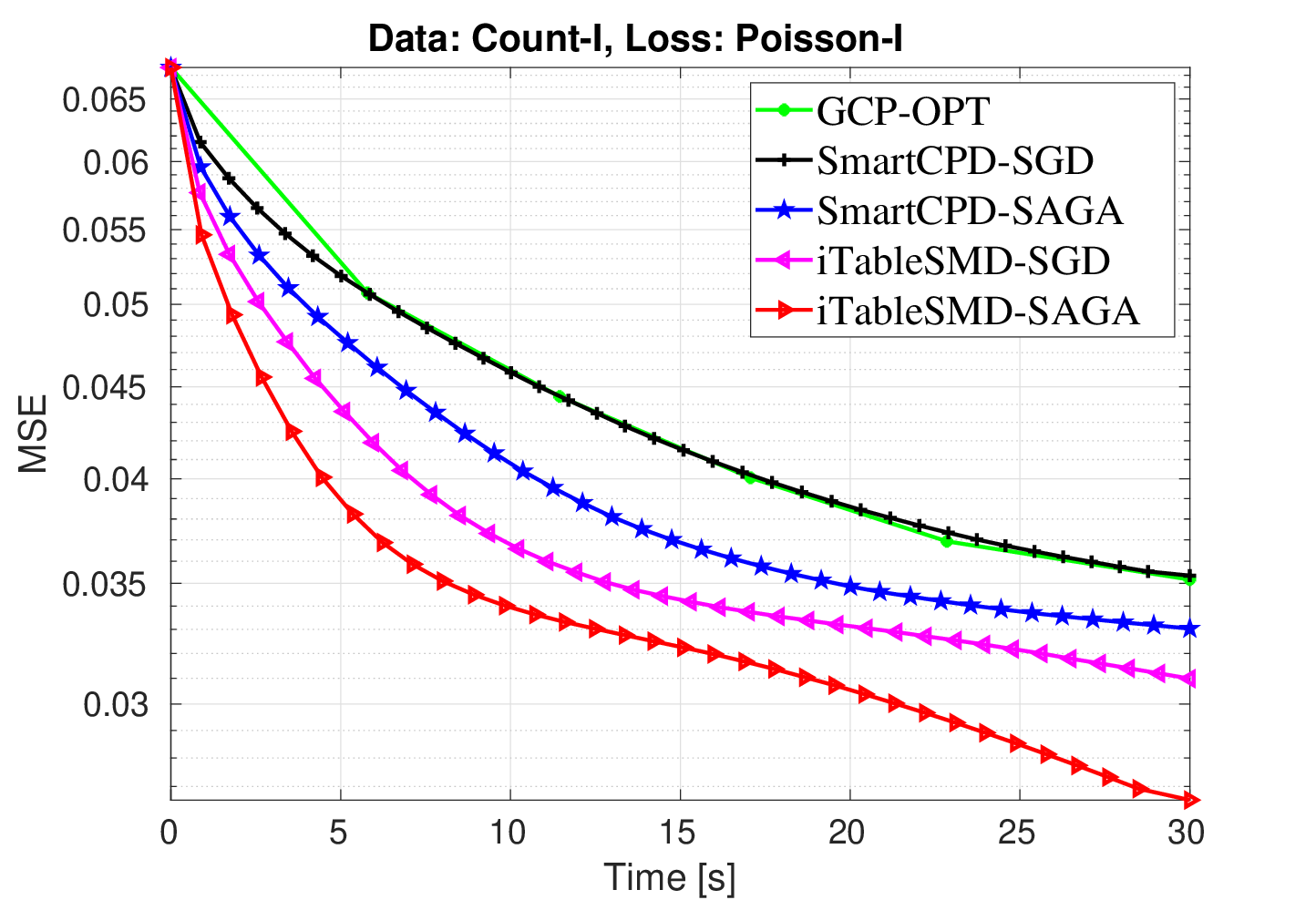}&
		\includegraphics[width=0.32\textwidth]{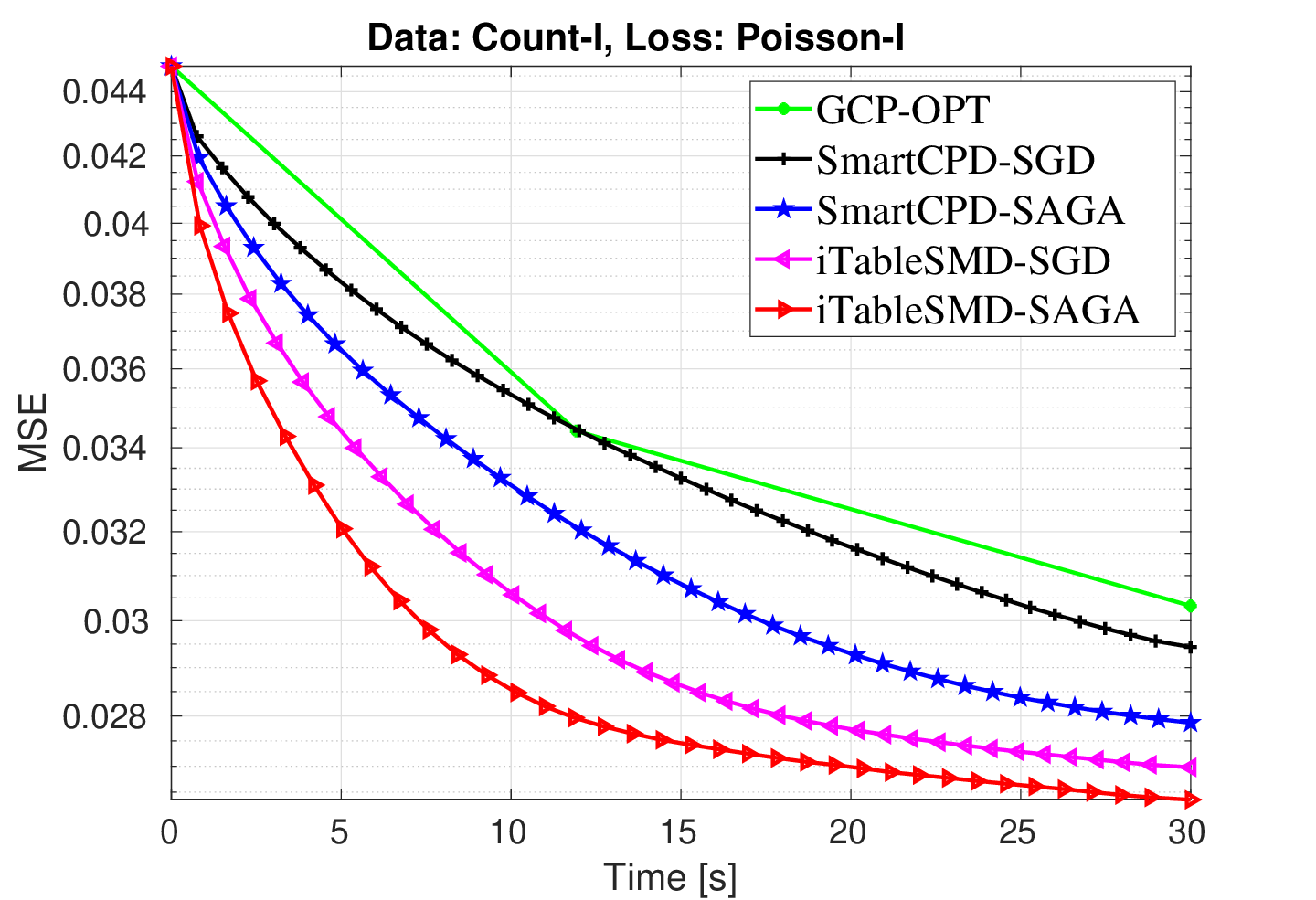}&
        \includegraphics[width=0.32\textwidth]{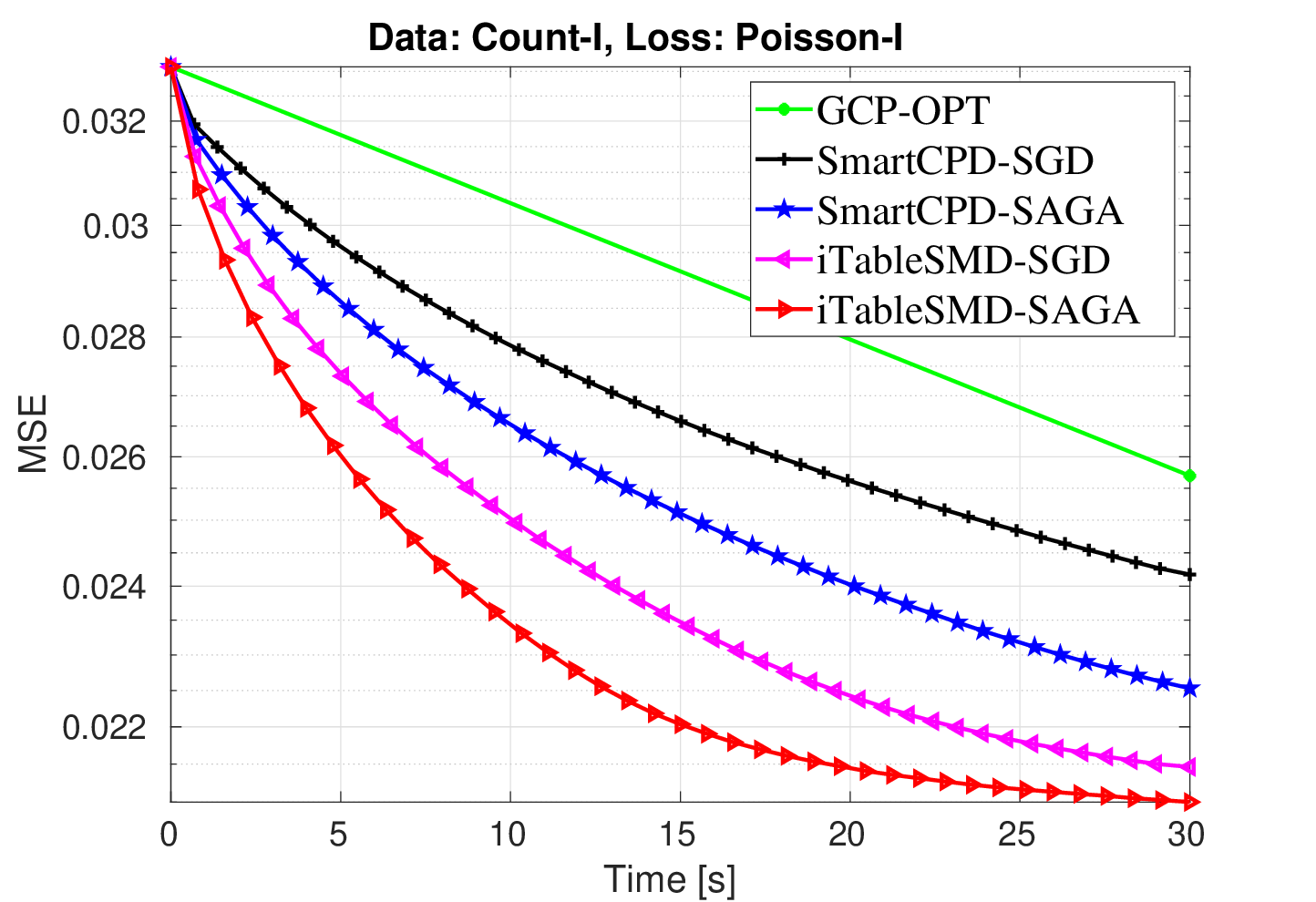}\\
		(d) $300\times300\times300$, $R=20$&(e) $300\times300\times300$,  $R=30$&(f) $300\times300\times300$, $R=40$
	\end{tabular}
	\caption{Numerical experiments for Poisson distribution on synthetic datasets.
	}
	\label{syn_poisson_exp}
\end{figure}

\subsubsection{Bernoulli distribution}
We see the results of numerical experiments on binary tensors of sizes $100\times80\times100$ and $50\times100\times200$. We set inertial parameters $\alpha^{k}=\frac{3(k-1)}{5(k+2)}$ and $\beta^{k}=\frac{4(k-1)}{5(k+2)}$ using a formula based on the iteration $k$, and chose a stepsize as $\eta^{k}=0.2$. The loss function is $f(m \,; x) = log(m+1)-x\,log(m+\epsilon)$ and each entry of the
binary tensor is generated from the Bernoulli distribution, i.e., $\underline{\mathcal{X}}_{i}=1$ with probability $\underline{\mathcal{M}}_{i}/(1+\underline{\mathcal{M}}_{i})$. Then, we focus on 
\begin{eqnarray*}
 \begin{aligned}
\min _{A_1, A_2,  A_3} &\quad  \frac{1}{I^N} \sum_{i \in \mathcal{I}}  \log (\underline{\mathcal{M}}_{i}+1)-\underline{\mathcal{X}}_{i}\log (\underline{\mathcal{M}}_{i}+\epsilon)+\sum_{n=1}^3 h_n\left(A_n\right) \\
 \text { s.t. } & \quad \underline{\mathcal{M}}_{i}=\sum_{r=1}^R \prod_{n=1}^3 {A}_n\left(i_n, r\right), \forall\, i \in \mathcal{I}, \\
\end{aligned}   
\end{eqnarray*}

In Figure \ref{syn_bernoulli_exp}(a) with tensor size $100\times80\times100$ and rank $R=5$, it is observed that all algorithms quickly reduce the MSE within the first few seconds. As the rank increases to $R=7$ in Figures \ref{syn_bernoulli_exp}(b) and (e), and $R=10$ in subfigures \ref{syn_bernoulli_exp}(c) and (f) respectively, there is a noticeable shift in the speed at which MSE decreases, with higher ranks leading to a slight slow down in convergence. In these four subfigures, we can further confirm that SmartCPD does not consistently outperform GCP-OPT and may at times be less effective. Nonetheless, iTalbeSMD-SAGA consistently shows robust performance, achieving low MSEs faster compared to the other method. When examining larger tensor sizes, as in subfigures \ref{syn_bernoulli_exp}(d)-(f), a similar pattern is evident, with all algorithms performing slower as the size and rank increase. Yet, the relative efficiency of iTalbeSMD-SAGA remains apparent, suggesting its advantage in dealing with larger and more complex data sets. 

Overall, iTalbeSMD-SGD and iTalbeSMD-SAGA are shown to reliably achieve lower MSEs more quickly across various synthetic tensor sizes and ranks, underlining its efficiency in different scenarios.

\begin{figure}[!htb]
	\setlength\tabcolsep{2pt}
	\centering
	\begin{tabular}{ccc}
		\includegraphics[width=0.32\textwidth]{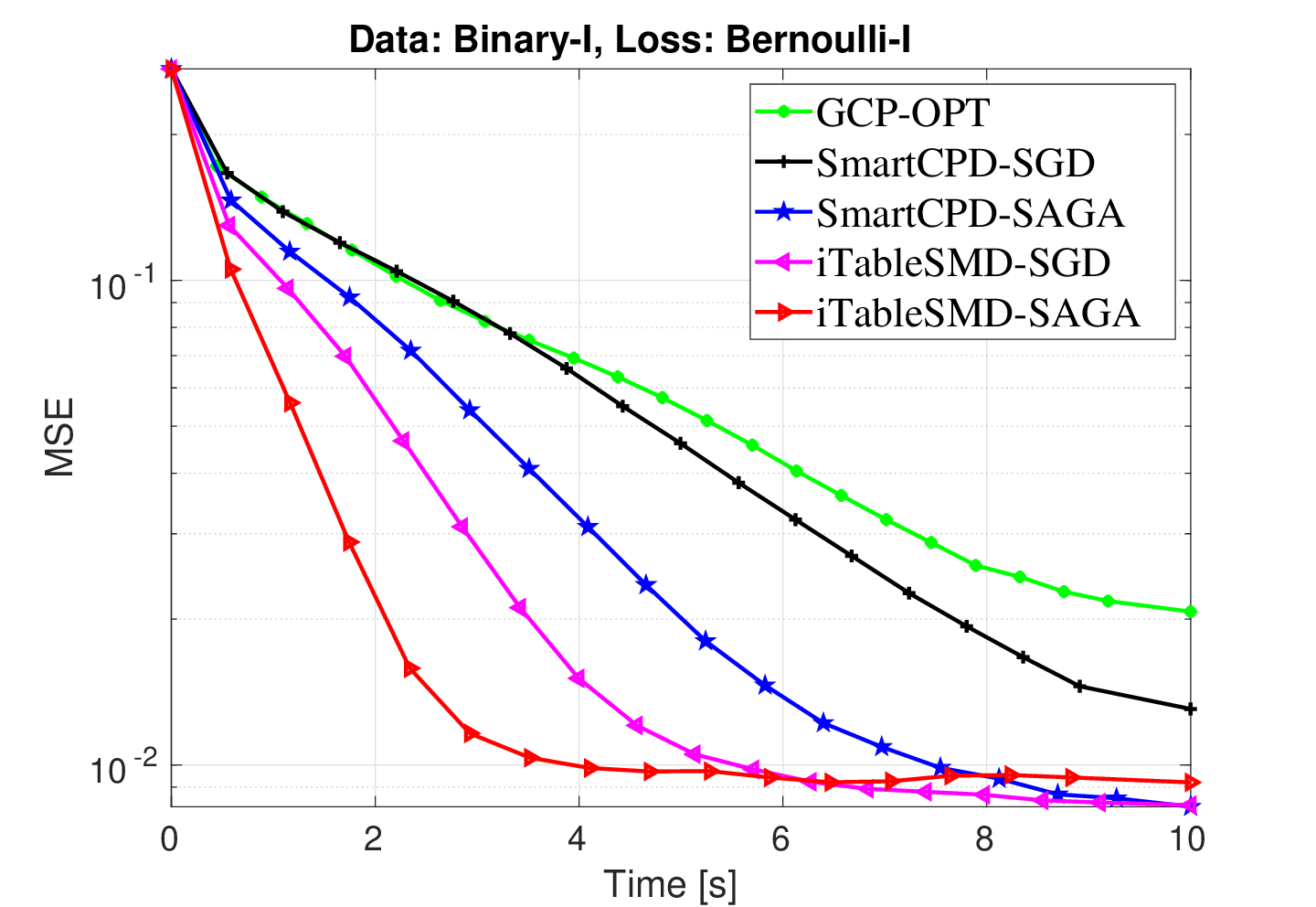}&
		\includegraphics[width=0.32\textwidth]{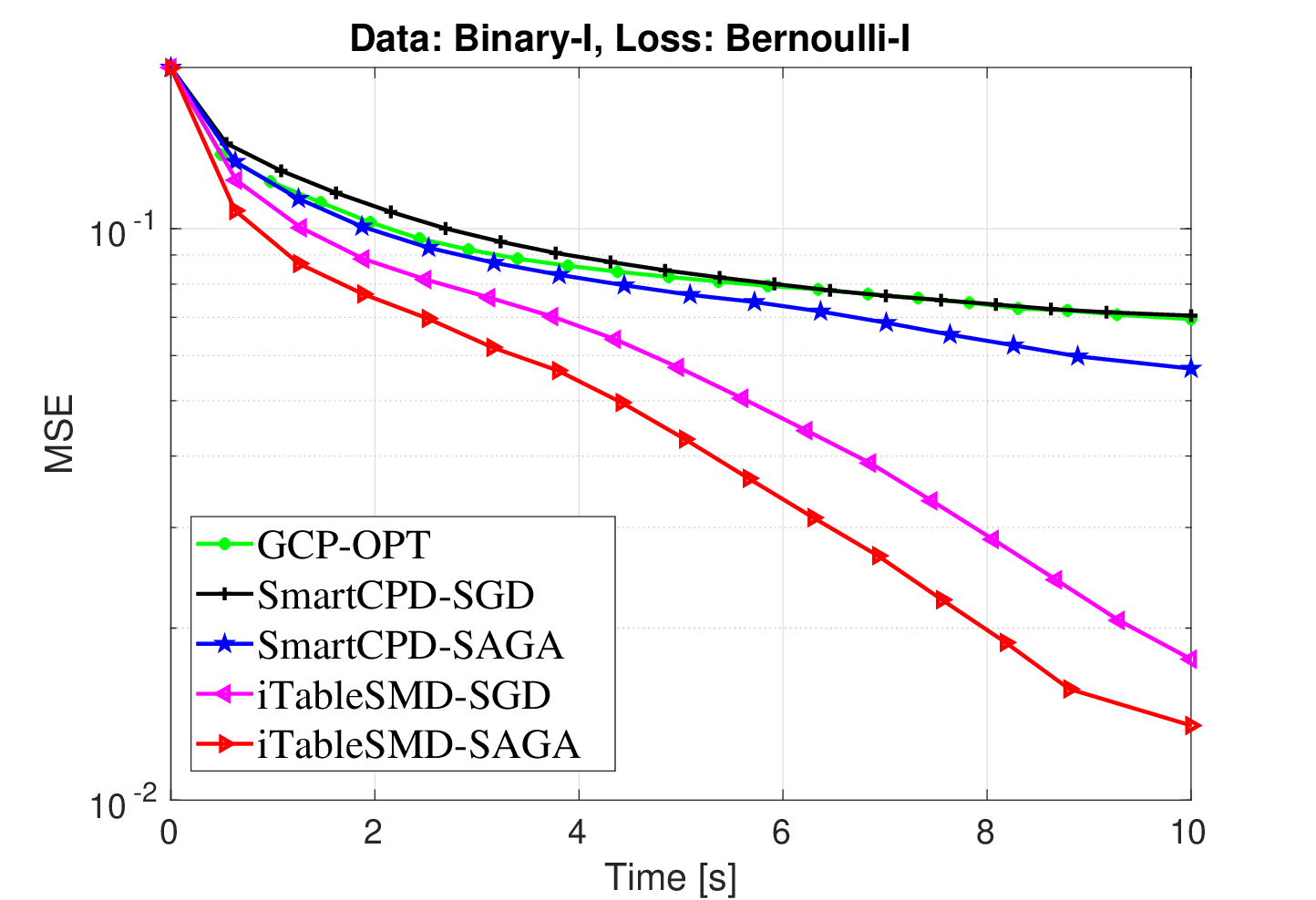}&
        \includegraphics[width=0.32\textwidth]{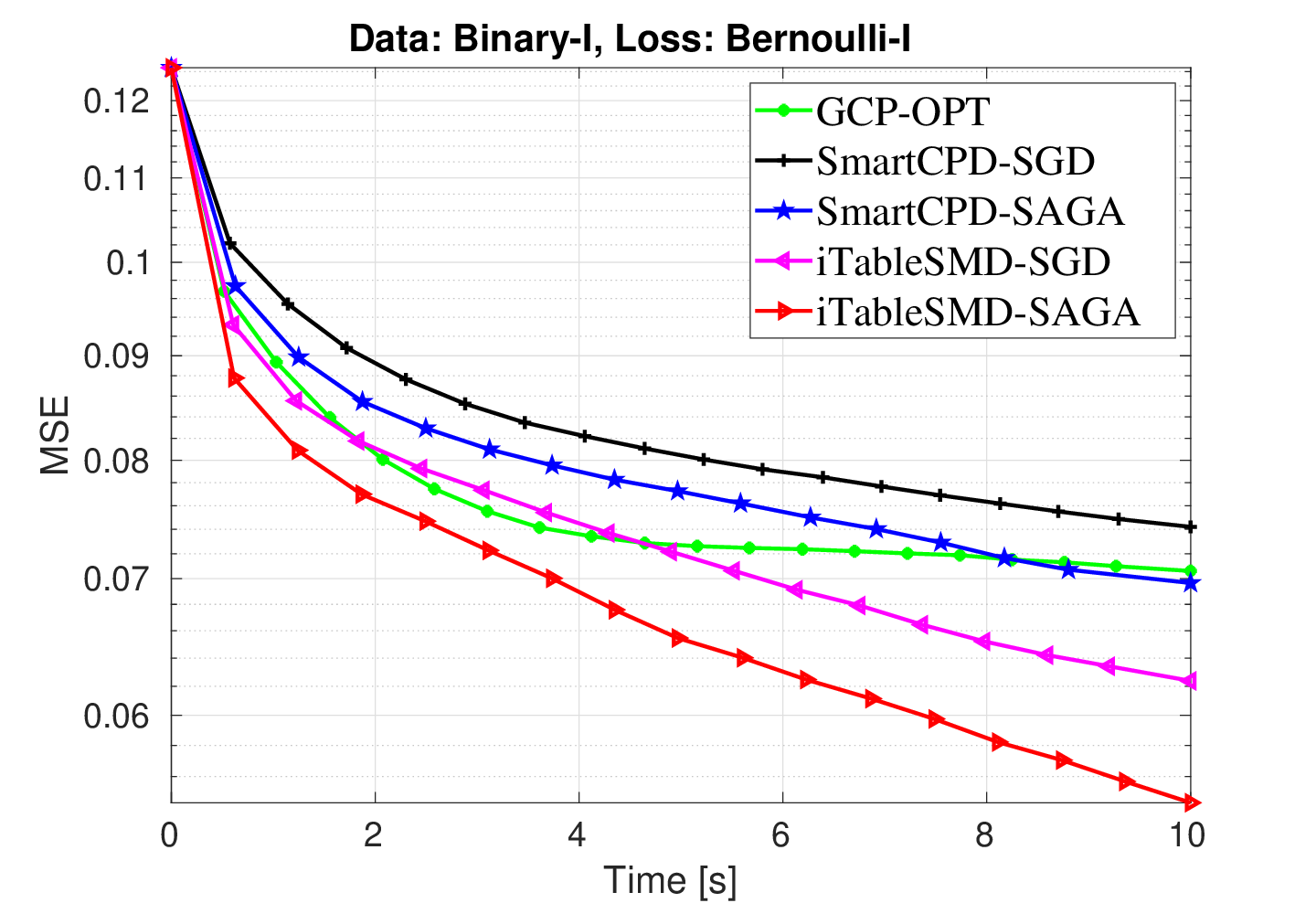}\\
		(a) $100\times80\times100$, $R=5$&(b) $100\times80\times100$, $R=7$&(c) $100\times80\times100$, $R=10$\\
        \includegraphics[width=0.32\textwidth]{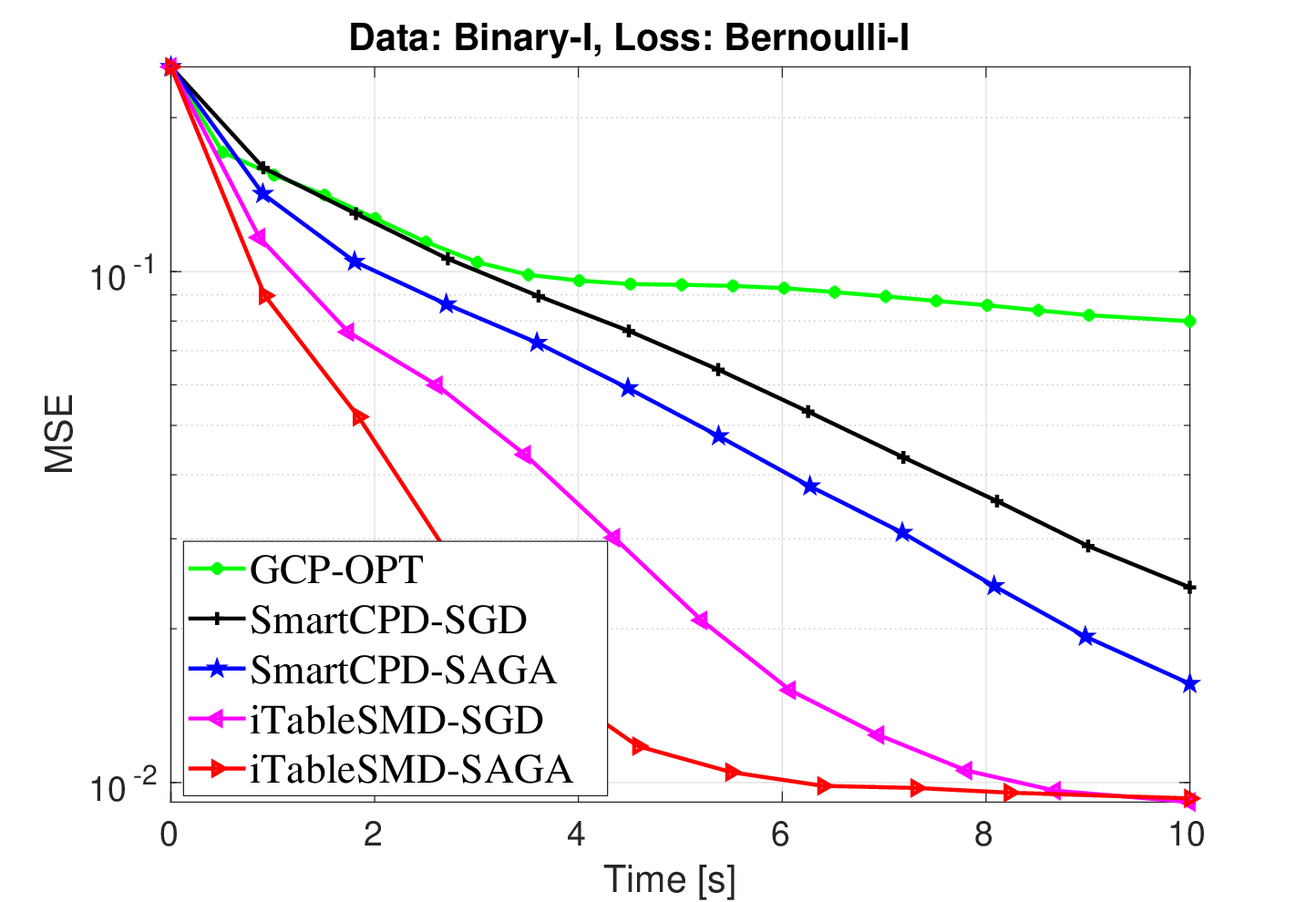}&
		\includegraphics[width=0.32\textwidth]{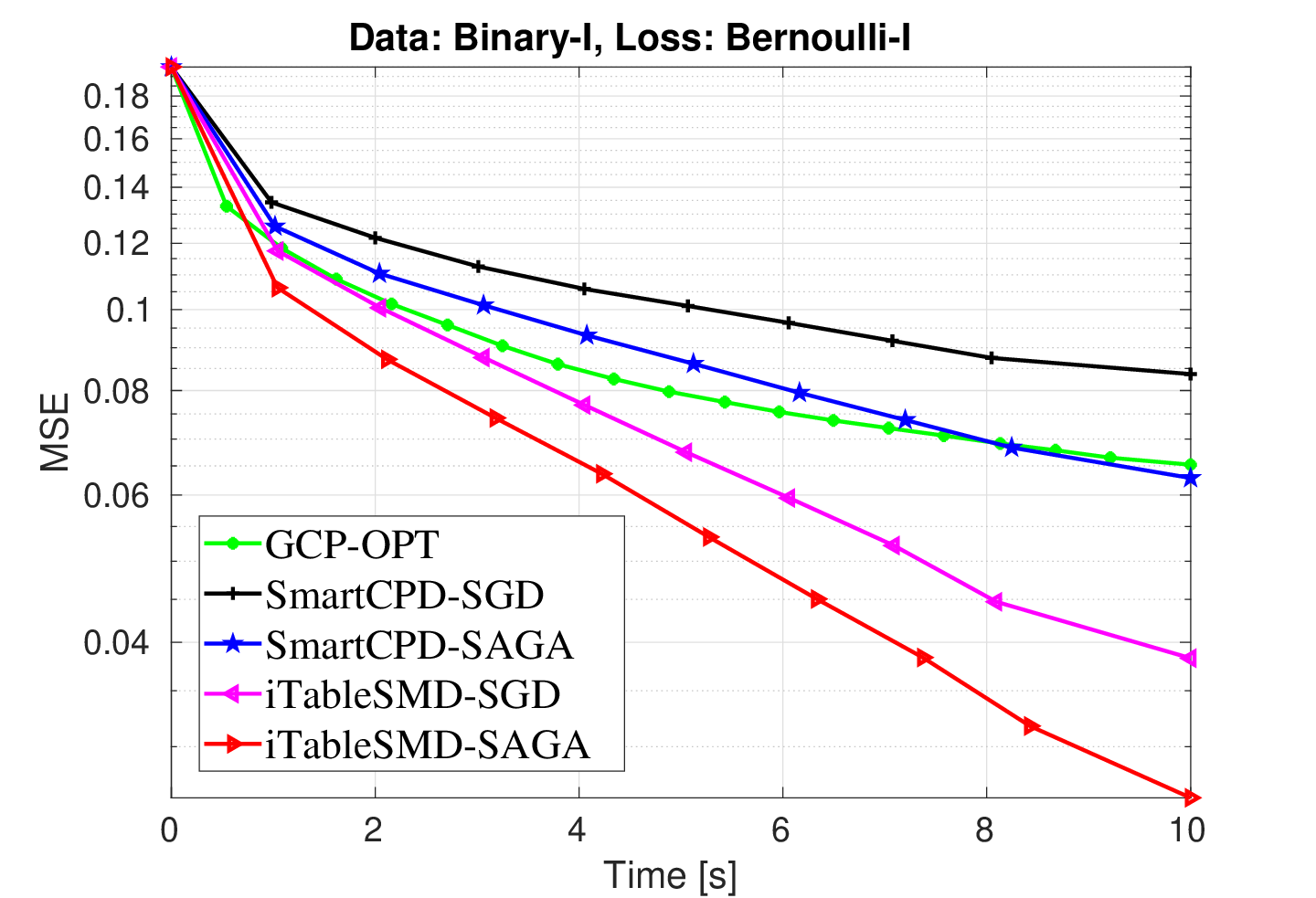}&
        \includegraphics[width=0.32\textwidth]{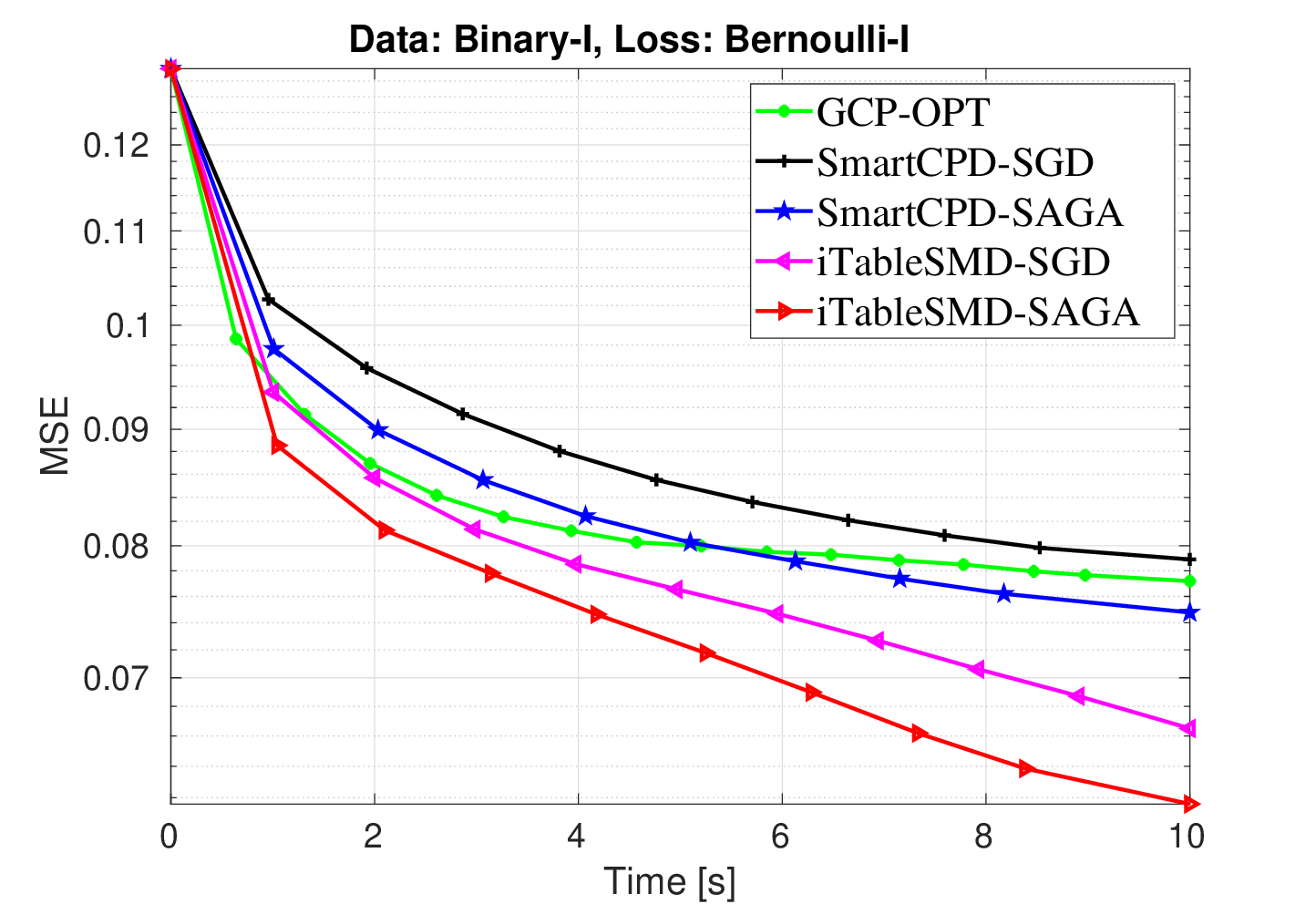}\\
		(d) $50\times100\times200$, $R=5$&(e) $50\times100\times200$, $R=7$&(f) $50\times100\times200$, $R=10$
	\end{tabular}
	\caption{Numerical experiments for Bernoulli distribution on synthetic datasets.
	}
	\label{syn_bernoulli_exp}
\end{figure}

\subsection{Real data experiments}

\subsubsection{Enron emails dataset}
    
    We apply the algorithms to the Enron emails dataset. This dataset comprises a large collection of email messages exchanged by the employees of the Enron Corporation, which was collected and prepared by the CALO Project (A Cognitive Assistant that Learns and Organizes). We use the extractive version in \cite{shetty2004enron} and select a subset involving 142 senders, 147 receivers, and 148 unique words, excluding any additional dimensions. The data is in the form of a third-order tensor ($sender \times receiver \times word$) with integer entries representing the number of words. The size of the tensor is $142\times147\times148$. It has 6581 ($\approx 0.2\%$) nonzero entries. We choose the loss function corresponding to the Poisson distribution, i.e., $f(x, m) = m-x \,log (m+\epsilon) $ and non-negativity constraints are considered for the latent matrices. In every iteration, $2R$ fibers are sampled by iTableSMD and SmartCPD and $2R \times \frac{142+147+148}{3}$ entries are sampled for GCP-OPT. We set inertial parameters $\alpha^{k}=\frac{3(k-1)}{5(k+2)}$. All algorithms under test are stopped when the relative change in the loss function is less than $10^{-10}$.

    Figure \ref{real_poisson_exp_01} presents a series of numerical experiments conducted on enron dataset with a Poisson distribution, examining the performance of various optimization algorithms with different tensor ranks under $R=3$, $R=5$, and $R=7$, respectively. Each algorithm is run for 5 trials and in each trial, the factor matrices are initialized by randomly sampling its entries from uniform distribution between 0 and 1. It is observed that iTableSMD stands out for its quick cost reduction, reaching a low cost within approximately 6 seconds, noticeably faster than the baseline methods, which require more time and yet do not achieve as low of a cost.  This quick performance is most notable at the higher rank of $R=7$, where iTableSMD quickly lower the cost apparently, surpassing other algorithms that struggle to converge within the same time.
  
\begin{figure}[!htb]
	\setlength\tabcolsep{2pt}
	\centering
	\begin{tabular}{ccc}
		\includegraphics[width=0.32\textwidth]{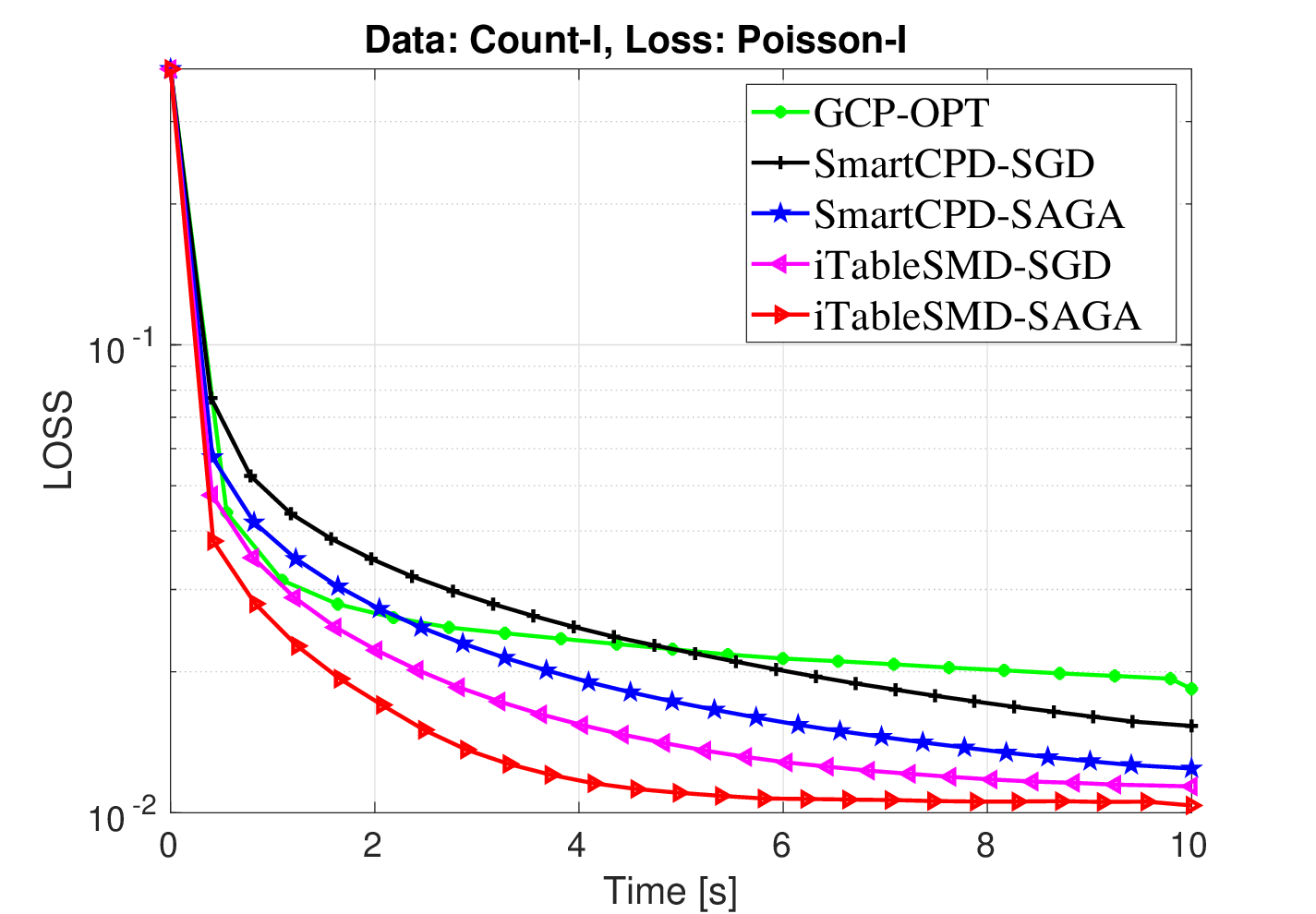}&
		\includegraphics[width=0.32\textwidth]{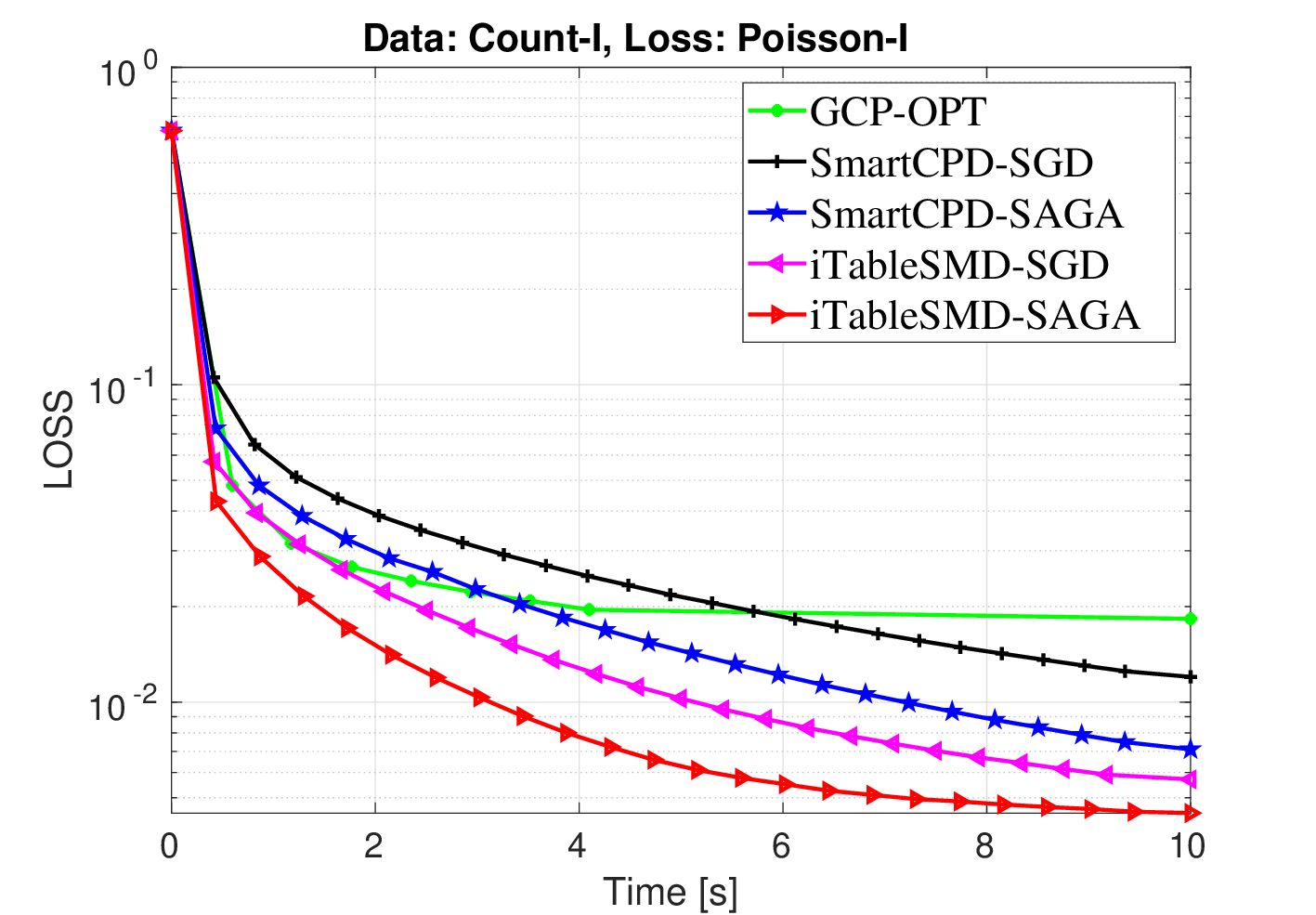}&
        \includegraphics[width=0.32\textwidth]{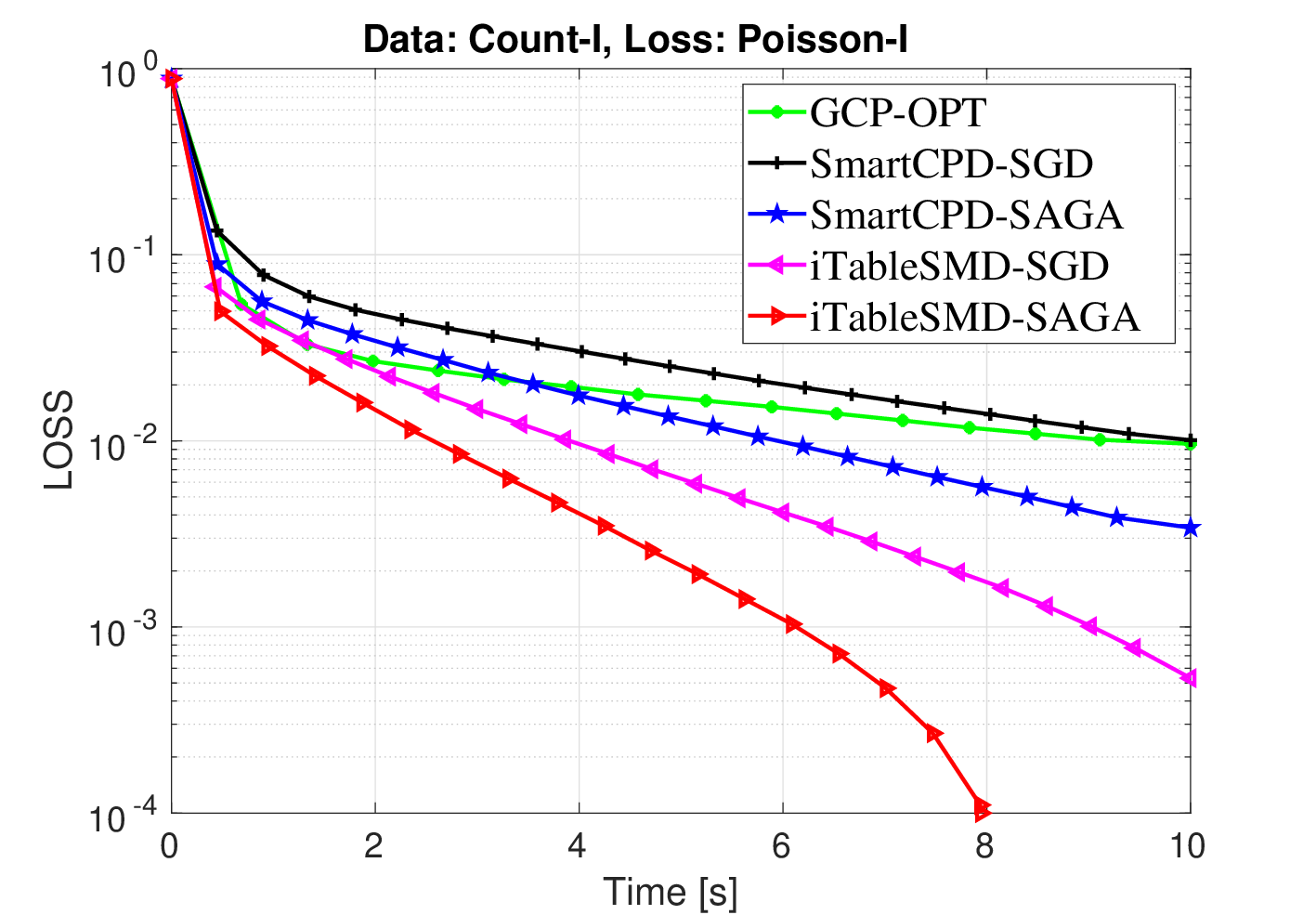}\\
		(a) $142\times147\times148$, $R=3$&(b) $142\times147\times148$, $R=5$&(c) $142\times147\times148$, $R=7$
	\end{tabular}
	\caption{Numerical experiments for Poisson distribution on Enron emails dataset.
	}
	\label{real_poisson_exp_01}
\end{figure}


 \subsubsection{The Flickr dataset}

 We evaluate the algorithms on the Flickr dataset, as referenced by Gorlitz et al. \cite{gorlitz2008}. The dataset consists of tags representing whether a user has labeled an image on a particular day, with non-zero values marked as binary indicators. We form a third-order binary tensor of size $520\times520\times520$. The chosen loss function is tailored for the Bernoulli distribution, i.e., $f(x, m) = log(m+1)-x\,log(m+\epsilon)$. Other settings and parameters are as before. 
Figure \ref{real_bernoulli_exp} shows the cost value change against time in seconds for different values of $R$. Similar to the previous datasets, the proposed iTableSMD shows considerable runtime advantages over GCP-OPT. The results reinforce the capability of iTableSMD to handle complex, real-world datasets effectively.
\begin{figure}[!htb]
	\setlength\tabcolsep{2pt}
	\centering
	\begin{tabular}{ccc}
		\includegraphics[width=0.32\textwidth]{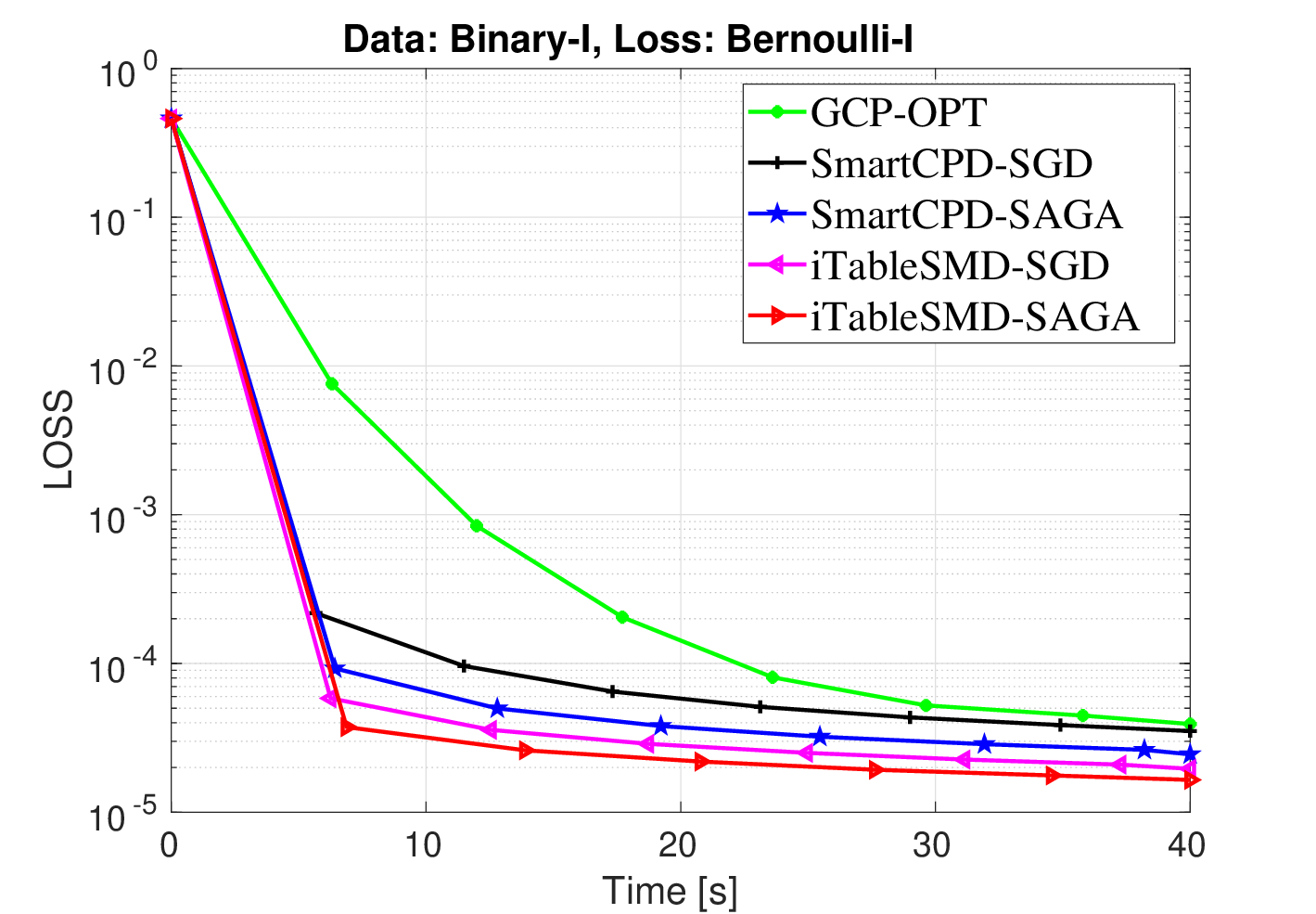}&
		\includegraphics[width=0.32\textwidth]{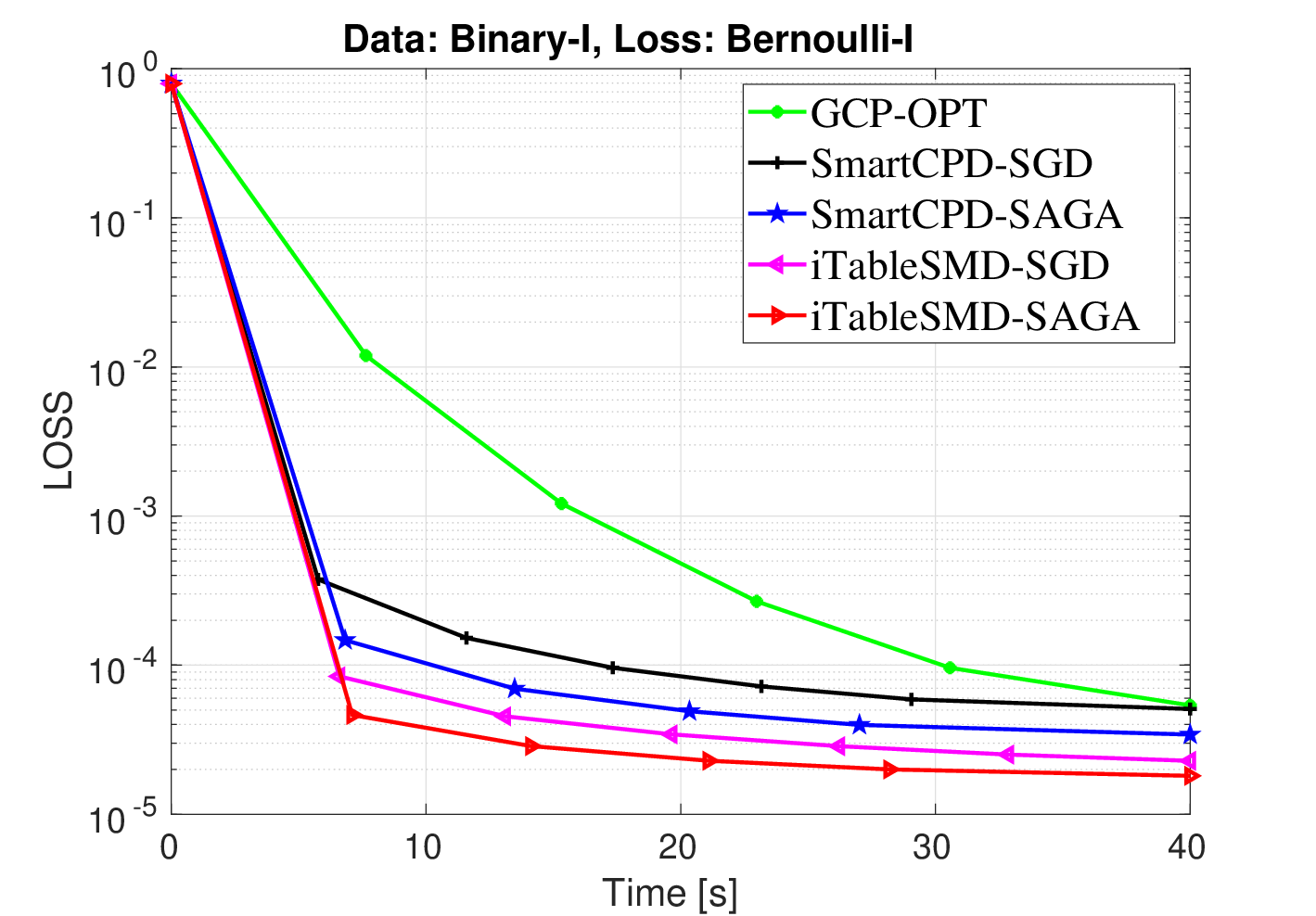}&
        \includegraphics[width=0.32\textwidth]{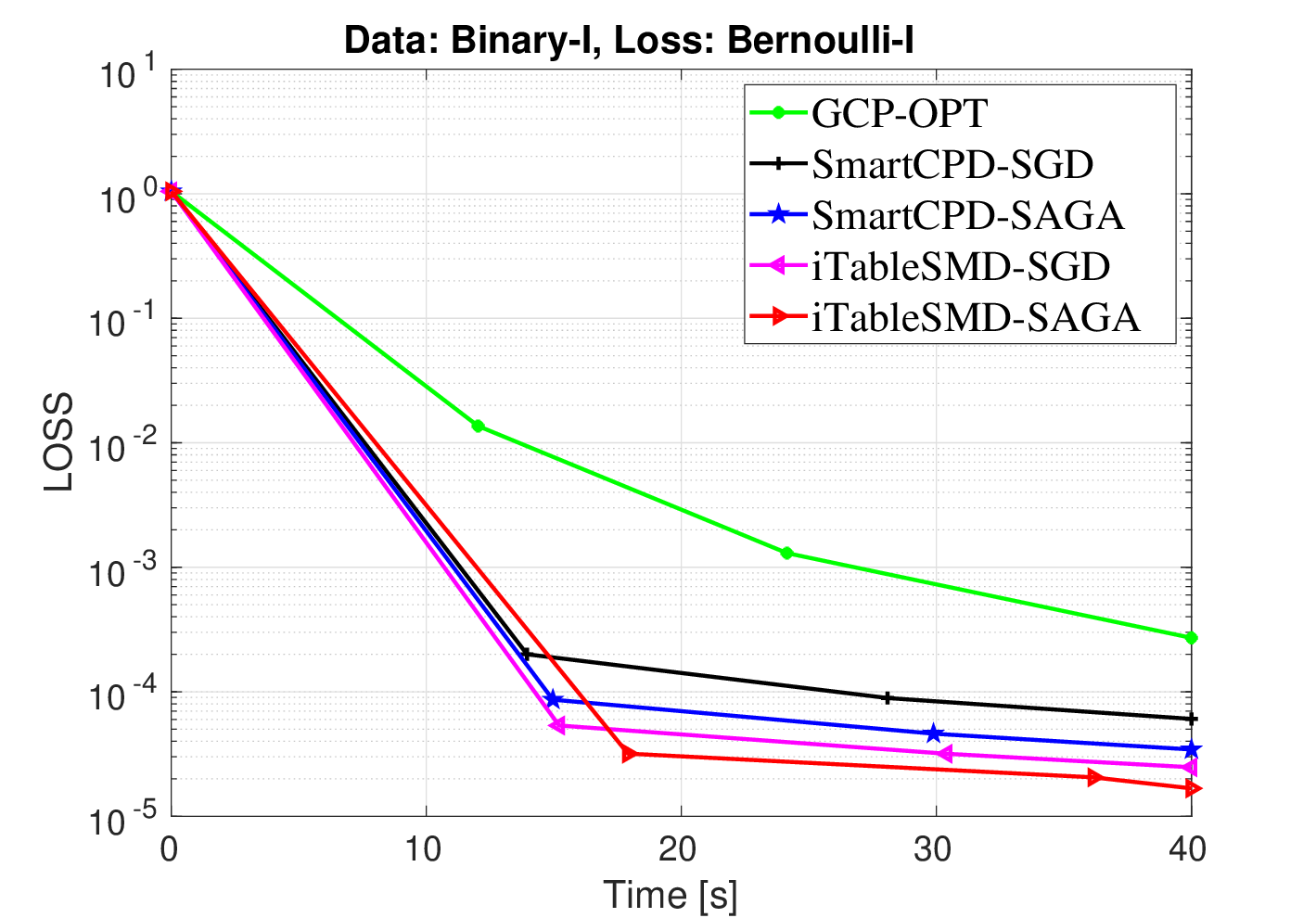}\\
		(a) $520\times520\times520$, $R=5$&(b) $520\times520\times520$, $R=10$&(c) $520\times520\times520$, $R=15$
	\end{tabular}
	\caption{Numerical experiments for Bernoulli distribution on the Flickr dataset.
	}
	\label{real_bernoulli_exp}
\end{figure}

\section{Conclusion}\label{conclusion}

In this paper, we proposed an inertial accelerated block randomized stochastic mirror descent algorithm (iTableSMD) for nonconvex multi-block objective functions beyond global Lipschitz gradient continuity. This algorithm is particularly tailored for large-scale Generalized Tensor CP (GCP) decomposition under non-Euclidean losses. By integrating a broader version of multi-block variance ruduction, we establish the sublinear convergence rate for the subsequential sequence produced
by the iTableSMD algorithm and prove it requires at most $\mathcal{O}(\varepsilon^{-2})$ iterations in expectation to attain an $\varepsilon$-stationary point. Additionally, we verify the global convergence of the sequence generated by iTableSMD. We tested the algorithm over various types of simulated and real data with several baselines, indicating significant computational efficiency improvements over existing state-of-the-art methods. These results highlight the advantages and effectiveness of incorporating an inertial accelerated stochastic approach in the algorithmic framework for GCP tensor decomposition.


\section*{Declarations}
{\bf Funding:} This research is supported by the R\&D project of Pazhou Lab (Huangpu) (Grant no. 2023K0603), the National Natural Science Foundation of China (NSFC) grant 12171021  and the Fundamental Research Funds for the Central Universities (Grant No. YWF-22-T-204).

\noindent{\bf Competing interests:} The authors have no competing interests to declare that are relevant to the content of this article.

\noindent{\bf Data  Availability Statement:} Data will be made available on reasonable request.

\bibliographystyle{abbrv}
\bibliography{Ref_iTable}

\end{document}